\documentclass{amsart}

\usepackage{graphicx}%
\usepackage{amsmath,amssymb,amsfonts}%
\usepackage{xcolor}%
\usepackage{enumerate}%
\usepackage{prettyref}%

\newrefformat{cor}{Corollary~\ref{#1}}
\newrefformat{subsec}{Section~\ref{#1}}
\newrefformat{lem}{Lemma~\ref{#1}}
\newrefformat{thm}{Theorem~\ref{#1}}
\newrefformat{sec}{Section~\ref{#1}}
\newrefformat{chap}{Chapter~\ref{#1}}
\newrefformat{prop}{Proposition~\ref{#1}}
\newrefformat{exa}{Example~\ref{#1}}
\newrefformat{tab}{Table~\ref{#1}}
\newrefformat{rem}{Remark~\ref{#1}}
\newrefformat{def}{Definition~\ref{#1}}
\newrefformat{fig}{Figure~\ref{#1}}
\newrefformat{claim}{Claim~\ref{#1}}
\newrefformat{assu}{Assumption~\ref{#1}}

\numberwithin{equation}{section}

\theoremstyle{plain}
\newtheorem{thm}{Theorem}[section]
\newtheorem{cor}[thm]{Corollary}
\newtheorem{lem}[thm]{Lemma}
\theoremstyle{definition}

\newtheorem{rem}[thm]{Remark}

\begin{document}

\title[stability of network-outputs]{The role of parameter Jacobians in the stability of network outputs}

\author[H. Jeong et al.]{Halyun Jeong}
\address{Department of Mathematics \& Statistics, University at Albany, State University of New York, Albany, NY, USA}

\author[]{Palle E.\,T. Jorgensen}
\address{Department of Mathematics, The University of Iowa, Iowa City, IA 52242, USA}

\author[]{Hyun-Kyoung Kwon}
\address{Department of Mathematics \& Statistics, University at Albany, State University of New York, Albany, NY, USA}

\author[]{Myung-Sin Song}
\address{Department of Mathematics \& Statistics, Southern Illinois University Edwardsville, Edwardsville, IL, USA}

\author[]{James Tian}
\address{Mathematical Reviews, 535 W William St, Ste 210, Ann Arbor, MI, USA}

\begin{abstract}
In the framework of network dynamics, learning models, and neural tangent kernels (NTK), we show that the corresponding linearized dynamics leads naturally to a semigroup formulation. More precisely, in our analysis of input/output models, the time-dynamics is presented via special semigroups of linear operators on Hilbert spaces, together with an associated class of semigroup perturbations. In this context, we then present new and explicit a priori perturbation-bound results: for the fixed-kernel linearization constructions arising in the NTK setting, we prove norm-bounds on the corresponding semigroup perturbations, in the form of explicit finite-time perturbation estimates. We further present refinements on prescribed task spaces, Ces\`aro-averaged (ergodic) comparisons estimates, and versions in which the lower spectral edge assumption is replaced by a spectral-distribution condition. We also extend the comparison to nonautonomous NTK evolutions through piecewise-frozen approximations, record a corresponding discrete Euler specialization, and offer worked examples in order to illustrate our perturbation-bound estimates.
\end{abstract}

\keywords{Networks, learning models, task space, kernel linearization, neural tangent kernels (NTK), dynamics, semigroups of operators, perturbation, finite-time perturbation estimates, stability of network-outputs}

\subjclass[2020]{68T07, 62M10, 47A63}

\maketitle

\section{Introduction}\label{sec:1}

The role of Jacobians in stability questions for network outputs is
the main theme of this paper. We study bounded linearizations between
Hilbert spaces and use operator-theoretic tools to quantify how perturbations
on the parameter side propagate to the output side. Our focus is on
sensitivity, stability, and approximation error for network outputs
when one restricts, freezes, or prunes parameter directions. For the
general context, motivation, and background we refer to
\cite{MR3711461,MR3144003,MR2968753,MR5040882,MR5038664,MR5009653,MR5008590}.

The present setup may be viewed as the fixed-kernel linearization
that appears in the neural tangent kernel picture. In that interpretation,
$H$ is the parameter space, $K$ is the output space, and $T:H\to K$
is the Jacobian of the output map at a reference point. The associated
output-side operator is $G=TT^{*}$, and for squared loss, the residual
in the linearized model evolves under the contraction semigroup
$e^{-tG}$. Replacing $T$ by $TP$ means that only the parameter
directions in $ran P$ remain active in the linearized
dynamics. The corresponding effective operator is
$G_{P}=TPT^{*}$, with semigroup
$S_{P}\left(t\right)=e^{-tG_{P}}$. The basic problem studied here
is to compare this perturbed evolution with the original one on a
prescribed task space and over a prescribed time interval.

We fix a closed subspace $M\subseteq K$ to isolate
the output directions relevant to the problem at hand. Thus the
comparison is not made on all of $K$, but only on the task space $M$
and the closed $G$-cyclic subspace it generates. This
restriction is important: pruning is not only a model-reduction step
on the parameter side, but also a perturbation of the output-side
generator that drives the residual dynamics. The main idealization is
that the linearized operator is held fixed throughout the evolution.

Our choice of an approximation scheme and a priori estimates is also
motivated in part by the dynamical sampling viewpoint, where stability
and reconstruction are studied through samples of operator orbits
generated by an underlying evolution; see, for example,
\cite{MR5071919, MR4581908, MR4696783}. In the discrete-time setting, this typically involves
iterates such as matrix powers $A^{n}$, while in continuous time, the corresponding
evolution is described by a semigroup, 
$S(t)=e^{-tG}$ or $S_{P}(t)=e^{-tG_{P}}$.

Two quantitative tangent-loss hypotheses on the pruning projection $P$
are given in the paper. When
the removed parameter directions contain only a small relative portion
of the task-relevant tangent image, we use the relative task-capture
condition
\[
\|(I-P)T^*u\|_H \le \epsilon \|T^*u\|_H
\]
on the $G$-cyclic task space generated by $M$. This condition
is well suited to finite-time stability estimates, because it says that
on the spectral directions coupled to the task, the discarded
parameter directions carry only a small part of the tangent energy. It
gives the comparison estimates of \prettyref{sec:3}. The complementary
finite-dimensional comparison in \prettyref{sec:4} does not assume this
condition; it instead uses the invariance of the common task space under
$G_{P}$ and separates the diagonal attenuation from off-diagonal mixing.

For exact hard truncation, however, the discarded tangent component often need
not be uniformly small relative to \(\|T^*u\|_H\) on all directions in the
cyclic task space. In that case we use instead the mixed condition
\[
\|(I-P)T^*u\|_H^2 \le \alpha \|T^*u\|_H^2+\beta\|u\|_K^2,
\]
in which \(\alpha\) measures a relative loss while \(\beta\) records an
additive task-space error. 

This should be viewed as a relative form bound with an additive
tolerance; the term \(\beta\|u\|_K^2\) is useful when the task energy
\(\|T^*u\|_H^2\) does not uniformly control the ambient task norm.
Further motivation based on the machine learning and signal processing literature is given in
\prettyref{rem:mixed-task-tail-interpretation}. In the
Ces\`aro/ergodic part of the paper, the same condition is applied along
the unpruned orbit, where, together with spectral information on a
chosen $G$-invariant task space, it gives quantitative decay estimates for the
averaged discarded tangent component.

The finite-time and averaged estimates measure different effects. The
finite-time bounds control the deviation of the two evolutions at each
time in a fixed interval. The Ces\`aro/ergodic estimates compare their
time averages over increasing horizons: they quantify approximations of
the original average by the pruned average and isolate the stationary
task component that may remain in the limit. In this averaged part of
the paper, the mixed condition quantifies tangent leakage along the
unpruned orbit.

We now briefly place this fixed-kernel viewpoint in the broader NTK literature.
The Neural Tangent Kernel (NTK) framework has become a standard tool
in the theory of wide neural networks, because it describes
gradient-based training through the tangent features determined by the
network Jacobian \cite{jacot2018neural,lee2019wide}. At a reference
parameter, often the initialization, the Jacobian defines a first-order
linearization of the network, and the associated tangent kernel governs
the corresponding output-space gradient flow. In the infinite-width
limit, under the usual scaling assumptions, the empirical NTK converges
to a deterministic limiting kernel which remains effectively constant
during training. The nonlinear training dynamics are then asymptotically
described by the gradient flow of the linearized model.

This is the fixed-kernel, or lazy-training, viewpoint: the parameters
remain close to the reference point and the network is well approximated
by its first-order linearization \cite{chizat2019lazy}. For squared
loss, the residual flow is therefore generated by \(G=TT^{*}\), where
\(T\) is the Jacobian at the reference parameter. Our analysis takes this
frozen-kernel model as its starting point and asks how the output
dynamics change when the tangent feature map is modified. If the
trainable parameter directions are restricted by a projection \(P\),
then \(T\) is replaced by \(TP\), and the effective output-side kernel
becomes \(G_{P}=TPT^{*}\). The problem studied here is therefore the
comparison of \(e^{-tTT^{*}}\) and \(e^{-tTPT^{*}}\) on a prescribed
task space. This viewpoint is also consistent with the broader idea that
effective learning may take place in low-dimensional or restricted
parameter subspaces \cite{aghajanyan2021intrinsic}, but our focus is the
resulting operator-theoretic perturbation problem. For more background
and additional references on tangent kernels and related operator-theoretic
tools, we refer readers to
\cite{MR4749130, MR4852244,MR4777426,MR5052480,MR5035464,MR4992731,MR4957888,MR4867336}.

The main contribution is an a priori, task-local perturbation theory for
output dynamics generated by Jacobian Gram operators. The contributions
may be summarized as follows; First, for a frozen Jacobian, we derive
explicit finite-time comparison bounds under task-local tangent capture,
together with a complementary finite-dimensional comparison that separates
attenuation within spectral modes coupled to the task from mixing between them.
Second, for Ces\`aro averages, we prove that the restricted average
approximates the original one at an inverse-horizon rate under a positive
lower spectral edge assumption, and we
identify exactly the stationary task component that persists in general.
Third, we replace the lower-edge assumption by taskwise bounds on the
low-frequency spectral mass of both the original and restricted generators
and obtain corresponding approximation rates. Fourth, we show that the basic finite-time
argument extends with little change to a class of unbounded generators under
suitable domain assumptions; a systematic unbounded theory is left for
future work since it could deviate from our main goal.  Finally, for time-dependent Jacobians, we control
piecewise-frozen approximations with local restrictions of parameter
directions and derive a corresponding discrete Euler estimate. 

\subsection{Organization}

Section~\ref{sec:2} derives the fixed-kernel output dynamics.
Sections~\ref{sec:3} and \ref{sec:4} establish finite-time comparison
estimates, first from task-local tangent capture and then through a
complementary finite-dimensional refinement that separates attenuation
from mixing. Sections~\ref{sec:ergodic-task} and
\ref{sec:spectral-distribution-replacement} study Ces\`aro-averaged
dynamics. They give approximation rates under either a positive lower
spectral edge or weaker low-frequency spectral-distribution assumptions
and identify the stationary part that can remain in the limiting
difference. Section~\ref{sec:unbounded-kato} shows that the basic
finite-time argument extends, under suitable domain assumptions, to a
class of unbounded generators. Sections~\ref{sec:na-ntk} and
\ref{subsec:na-discrete} treat time-dependent kernels through
piecewise-frozen and explicit Euler approximations. The Appendix gives a
finite-dimensional worked example with explicit constants.

\section{The fixed-kernel linearization and the NTK picture}\label{sec:2}

We briefly explain the machine learning interpretation of the operator-theoretic
model used in this paper. The aim is not to survey the full neural
tangent kernel literature, but to point out the linearized dynamics
that lead naturally to the semigroups studied below.
Let $H$ be a Hilbert space of parameters, let $K$ be a Hilbert space
of outputs, and let
\[
F:H\to K
\]
be a differentiable map representing a model. Fix a reference parameter
$\theta_{0}\in H$, and set
\[
T:=DF\left(\theta_{0}\right):H\to K.
\]
The first-order approximation of the model at $\theta_{0}$ is
\[
F\left(\theta_{0}+h\right)\approx F\left(\theta_{0}\right)+Th,
\]
for small parameter increments $h\in H$.
We restrict attention to the linearized dynamics at the reference
point $\theta_{0}$, with the Jacobian $T=DF\left(\theta_{0}\right)$
held fixed throughout the evolution. In the machine learning literature,
this is often called the fixed-kernel or frozen NTK setting. The analysis
below concerns this linearized residual flow, rather than the full
nonlinear training process.
Now fix a target output $y\in K$, and consider the squared loss
\[
\mathcal{L}\left(h\right):=\frac{1}{2}\left\Vert F\left(\theta_{0}\right)+Th-y\right\Vert ^{2}.
\]
Its gradient with respect to the parameter variable $h$ is
\[
\nabla\mathcal{L}\left(h\right)=T^{*}\left(F\left(\theta_{0}\right)+Th-y\right).
\]
Hence the gradient flow equation for the linearized model is
\[
h'\left(t\right)=-T^{*}\left(F\left(\theta_{0}\right)+Th\left(t\right)-y\right).
\]
It is more convenient to pass to the residual
\[
r\left(t\right):=F\left(\theta_{0}\right)+Th\left(t\right)-y\in K.
\]
Differentiating and using the gradient flow equation gives
\[
r'\left(t\right)=Th'\left(t\right)=-TT^{*}r\left(t\right).
\]
Thus, the residual evolves according to
\[
r'\left(t\right)=-Gr\left(t\right),\quad G:=TT^{*}.
\]
Since $G$ is positive and self-adjoint, it follows that
\[
r\left(t\right)=e^{-tG}r\left(0\right),\qquad t\ge0.
\]
This is the origin of the semigroup
\[
S\left(t\right):=e^{-tG}
\]
used throughout the paper.
In the neural tangent kernel picture, the operator $G=TT^{*}$ is
the output side kernel operator induced by the Jacobian $T$. In finite-dimensional
settings, it is the Gram operator associated with the linearized feature
map. Thus the semigroup $e^{-tG}$ describes the decay of residuals
under gradient flow for the frozen linearized model.

We next explain the role of the projection $P$. Suppose that only
the parameter directions in a closed subspace of $H$ are allowed
to evolve, while the remaining directions are frozen. If $P$ denotes
the orthogonal projection onto the active parameter subspace, then
the restricted linearized model becomes
\[
F\left(\theta_{0}\right)+TPh,
\]
and the corresponding gradient flow is
\[
h_{P}'\left(t\right)=-PT^{*}\left(F\left(\theta_{0}\right)+TPh_{P}\left(t\right)-y\right).
\]
If we set
\[
r_{P}\left(t\right):=F\left(\theta_{0}\right)+TPh_{P}\left(t\right)-y,
\]
then
\[
r_{P}'\left(t\right)=TPh_{P}'\left(t\right)=-TPT^{*}r_{P}\left(t\right).
\]
Hence the restricted residual dynamics are generated by
\[
G_{P}:=TPT^{*},
\]
and
\[
r_{P}\left(t\right)=e^{-tG_{P}}r_{P}\left(0\right).
\]
This is the origin of the perturbed semigroup
\[
S_{P}\left(t\right):=e^{-tG_{P}}.
\]
Accordingly, the problem studied in this paper is the comparison of
the two residual evolutions
\[
e^{-tTT^{*}}\qquad\text{and}\qquad e^{-tTPT^{*}},
\]
that is, the comparison between full linearized training and training
in which only a prescribed family of parameter directions remains
active.
The analysis below does not address the full nonlinear training process,
in which the Jacobian itself changes with time. Instead, it isolates
the stability question inside the frozen linearization at a reference
point. This is nevertheless useful because it yields an explicit model
in which the effect of restricting parameter directions can be studied
in operator-theoretic terms.
We now explain the role of the task space $M$. In many situations,
one is not interested in controlling the full output space $K$, but
only a distinguished family of output directions relevant to the task
under consideration. These may represent, for example, a prescribed
collection of residuals, observables, labels, or output modes. For
this reason, the comparison between $S\left(t\right)$ and $S_{P}\left(t\right)$
is made only on a fixed closed subspace
\[
M\subseteq K.
\]
Since the unperturbed flow may move vectors in $M$ through additional
output directions, the natural space on which to impose hypotheses
is not $M$ itself, but the closed $G$-cyclic subspace
\begin{equation}
\mathcal C_G(M)
:=
\overline{\operatorname{span}}
\left\{G^{n}v:v\in M,\ n\ge0\right\} .
\label{eq:2-1}
\end{equation}
It is the smallest closed $G$-invariant subspace containing $M$ and
records the spectral directions of $G$ coupled to the prescribed task
space. The next lemma shows, in particular, that the closed span of the
unperturbed orbit over any interval of positive length is already the
whole cyclic task space; the space itself does not depend on the
comparison horizon.

\begin{lem}
\label{lem:cyclic-orbit}
For every $\sigma>0$,
\begin{equation}
\mathcal C_G(M)
=
\overline{\operatorname{span}}
\left\{S(t)v:v\in M,\ 0\le t\le\sigma\right\}.
\label{eq:cyclic-orbit}
\end{equation}
Moreover, $\mathcal C_G(M)$ is the smallest closed $G$-invariant
subspace containing $M$. Since $G$ is self-adjoint, this subspace reduces
$G$; in particular, it is invariant under $S(t)$ and $G^{1/2}$, and
\begin{equation}
\left\|G^{1/2}|_{\mathcal C_G(M)}\right\|^{2}
=
\left\|G|_{\mathcal C_G(M)}\right\|.
\label{eq:cyclic-half-norm}
\end{equation}
\end{lem}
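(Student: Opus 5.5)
We prove the two inclusions in \eqref{eq:cyclic-orbit} separately, and then deduce the structural statements from spectral calculus. Throughout, $G=TT^{*}$ is bounded, positive and self-adjoint. Write $\mathcal O_\sigma:=\overline{\operatorname{span}}\{S(t)v:v\in M,\ 0\le t\le\sigma\}$.

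\emph{Inclusion $\mathcal O_\sigma\subseteq\mathcal C_G(M)$.} Since $G$ is bounded, $S(t)v=\sum_{n\ge0}(-t)^nG^nv/n!$ converges in norm. Every partial sum lies in $\operatorname{span}\{G^nv\}$, so each $S(t)v$ lies in the closed subspace $\mathcal C_G(M)$. Taking closed spans gives the inclusion.

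\emph{Inclusion $\mathcal C_G(M)\subseteq\mathcal O_\sigma$.} Fix $v\in M$. The map $t\mapsto S(t)v$ is norm-smooth, and its right derivatives at $t=0$ are $(-1)^nG^nv$. We argue by induction on $n$ that $G^nv\in\mathcal O_\sigma$, and more strongly that $G^nS(t)v\in\mathcal O_\sigma$ for all $t\in[0,\sigma)$. The case $n=0$ is clear. For the inductive step,
\[
G^{n+1}S(t)v=-\lim_{h\downarrow0}\frac{G^nS(t+h)v-G^nS(t)v}{h},
\]
and for $t<\sigma$ and small $h>0$ the difference quotient lies in $\mathcal O_\sigma$ by the induction hypothesis applied at $t$ and $t+h$. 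Because $\mathcal O_\sigma$ is closed, the limit also lies in $\mathcal O_\sigma$. Setting $t=0$ gives $G^nv\in\mathcal O_\sigma$ for all $n$, hence $\mathcal C_G(M)\subseteq\mathcal O_\sigma$. This derivative bookkeeping is the only delicate point: one must stay inside $[0,\sigma)$ and use one-sided limits. An alternative is a duality argument. If $w\perp\mathcal O_\sigma$, then $f(t)=\langle S(t)v,w\rangle$ is real-analytic (indeed entire) and vanishes on $[0,\sigma]$, so all its derivatives at $0$ vanish, i.e.\ $\langle G^nv,w\rangle=0$ for all $n$. I would present this duality version, since it is cleaner.

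\emph{Structural statements.} Minimality is immediate. $\mathcal C_G(M)$ is closed and contains $M$. It is $G$-invariant because $G$ maps $\operatorname{span}\{G^nv\}$ into itself and is continuous. Conversely, any closed $G$-invariant subspace containing $M$ contains every $G^nv$. For reducing, note that if $L$ is closed and $G$-invariant with $G=G^{*}$, then for $x\in L^{\perp}$ and $y\in L$ we have $\langle Gx,y\rangle=\langle x,Gy\rangle=0$. So $L^{\perp}$ is also invariant, and the orthogonal projection $Q$ onto $L$ commutes with $G$. Then $Q$ commutes with every continuous function of $G$. In particular it commutes with $S(t)=e^{-tG}$ and with $G^{1/2}$: the latter is a norm limit of polynomials in $G$ by Weierstrass on $\sigma(G)\subseteq[0,\|G\|]$. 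Hence $L$ is invariant under both.

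\emph{Norm identity \eqref{eq:cyclic-half-norm}.} Set $A:=G^{1/2}|_{\mathcal C_G(M)}$. By reduction, $A$ is a positive self-adjoint operator on $\mathcal C_G(M)$ with $A^2=G|_{\mathcal C_G(M)}$. The C$^*$-identity $\|A^{*}A\|=\|A\|^2$ then gives $\|A\|^2=\|A^2\|=\|G|_{\mathcal C_G(M)}\|$.
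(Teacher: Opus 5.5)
Your proof is correct. The structural part (minimality, reduction, invariance under $S(t)$ and $G^{1/2}$, and the norm identity) follows the paper's route, except that you prove the standard facts the paper only cites. Your C$^*$-identity argument for $A=G^{1/2}|_{\mathcal C_G(M)}$ is an equally valid substitute for the paper's appeal to the spectral theorem.

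The real difference is in the inclusion $\mathcal C_G(M)\subseteq\mathcal O_\sigma$. The paper does this in one stroke with the binomial finite-difference identity
\[
\Bigl(\frac{I-e^{-hG}}{h}\Bigr)^{n}v
=h^{-n}\sum_{k=0}^{n}(-1)^{k}\binom{n}{k}e^{-khG}v,
\qquad 0<h\le\sigma/n,
\]
combined with $(I-e^{-hG})/h\to G$ in operator norm. This exhibits each $G^{n}v$ as an explicit norm limit of combinations of the $n+1$ equally spaced snapshots $e^{-khG}v$ with $kh\in[0,\sigma]$. That makes it constructive, and it is the continuous counterpart of the Vandermonde snapshot argument in Theorem~\ref{thm:2-4}.

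Your inductive version is the same idea unpacked into iterated one-sided limits, which is why you must carry the statement for all $t\in[0,\sigma)$. Your duality version is nonconstructive but shorter.

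For the lemma as stated, analyticity is more than you need. It suffices that $f(t)=\langle S(t)v,w\rangle$ is smooth with right derivatives $f^{(n)}(0)=\langle(-G)^{n}v,w\rangle$, and these vanish because $f\equiv0$ on $[0,\sigma]$. Entirety of $f$ does, however, buy a stronger statement via the identity theorem. The closed span of $\{S(t)v: v\in M,\ t\in\Lambda\}$ equals $\mathcal C_G(M)$ for any set of times $\Lambda\subseteq[0,\infty)$ with a finite accumulation point, not just for an interval.
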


\begin{proof}
Let $N:=\mathcal C_G(M)$. Since $G$ is bounded, the exponential series
shows that $S(t)M\subseteq N$ for every $t\ge0$. This proves that the
right-hand side of \prettyref{eq:cyclic-orbit} is contained in $N$.

Conversely, fix $v\in M$ and $n\ge1$. For $h>0$,
\[
\left(\frac{I-e^{-hG}}{h}\right)^{n}v
=
h^{-n}\sum_{k=0}^{n}(-1)^{k}\binom{n}{k}e^{-khG}v.
\]
If $h\le\sigma/n$, every time $kh$ belongs to $[0,\sigma]$. Moreover,
$(I-e^{-hG})/h\to G$ in operator norm as $h\downarrow0$, and hence the
left-hand side converges to $G^{n}v$. Thus every $G^{n}v$ belongs to the
right-hand side of \prettyref{eq:cyclic-orbit}; the case $n=0$ follows
from $S(0)v=v$. Taking closed linear spans gives the reverse inclusion.

Definition \prettyref{eq:2-1} immediately gives the minimality of
$N$ among closed $G$-invariant subspaces containing $M$. A closed
invariant subspace of the bounded self-adjoint operator $G$ is reducing,
so the functional calculus shows that $S(t)$ and $G^{1/2}$ preserve
$N$. Finally, the spectral theorem for the positive operator $G|_N$
gives \prettyref{eq:cyclic-half-norm}.
\end{proof}

The characterization in \prettyref{lem:cyclic-orbit} identifies
$\mathcal C_G(M)$ as the closed span of all output directions reached
from $M$ by the autonomous flow over any interval of positive length.
It is therefore the natural space on which to impose the task-capture
hypothesis used in \prettyref{sec:3}:
\[
\left\Vert \left(I-P\right)T^{*}u\right\Vert
\le\epsilon\left\Vert T^{*}u\right\Vert ,
\qquad u\in\mathcal C_G(M).
\]
For each task-coupled output direction $u$, this condition says that $T^{*}u$,
the adjoint of the Jacobian, lies predominantly in the active parameter
subspace $\operatorname{ran}P$. Equivalently, its component in the
discarded parameter directions has norm at most an $\epsilon$-fraction
of the full tangent norm, and hence carries at most an $\epsilon^{2}$-fraction
of the corresponding tangent energy.

Under this hypothesis, the estimates in \prettyref{sec:3} show that,
for initial data in $M$, the residual evolution of the restricted system
remains close to that of the full system on the interval $[0,\tau]$.
By contrast, \prettyref{sec:4} gives a complementary finite-dimensional
comparison that does not assume the task-capture condition. Instead, it
uses invariance of a common task space and decomposes the perturbation
into attenuation within task-coupled spectral modes and mixing between
distinct modes.

The horizon independence established in
\prettyref{lem:cyclic-orbit} follows from the autonomous structure:
every orbit segment is generated by the same operator $G$, and finite
differences formed from an arbitrarily short positive interval recover
each $G^{n}v$, $v\in M$, in the closed linear span of the orbit.
For a time-dependent generator, the analogous horizon independence need
not hold. Later stages of the evolution may couple the task to directions
not reached at earlier times, so the closed orbit span may genuinely
depend on the chosen horizon. This distinction motivates the
finite-horizon task-space construction introduced in
\prettyref{sec:na-ntk}.

We first develop the autonomous finite-time
estimates in the next two sections.

\section{Finite-time perturbation estimates}\label{sec:3}

Our first main result is Theorem~\ref{thm:2-2}. As we noted before, our fixed-kernel linearization tool and our NTK-analysis will take the form of pairs of semigroups of operators on systems of Hilbert spaces, their infinitesimal generators, and associated perturbations. This in turn allows us to obtain an a priori norm estimate \prettyref{eq:2-4} in Theorem~\ref{thm:2-2}, using Duhamel's formula for pairs of semigroups.
Let $H$ and $K$ be Hilbert spaces, and let $T:H\to K$ be a bounded operator.
Define
\[
G:=TT^{*},\quad G_{P}:=TPT^{*},\quad
Q_{P}:=G-G_{P}=T(I-P)T^{*},
\]
where $P$ is an orthogonal projection on $H$. Let
\[
S(t):=e^{-tG},\quad S_{P}(t):=e^{-tG_{P}}
\]
for $t\ge0$. Fix a closed subspace $M\subseteq K$ and $\tau>0$.
Set $N:=\mathcal C_G(M)$, with $\mathcal C_G(M)$ as in
\prettyref{eq:2-1}.
Assume that there exists $\epsilon\in\left[0,1\right)$ such that
\begin{equation}
\left\Vert \left(I-P\right)T^{*}u\right\Vert _{H}\le\epsilon\left\Vert T^{*}u\right\Vert _{H},\quad\forall u\in N, \label{eq:2-2} 
\footnote{
The hypothesis \prettyref{eq:2-2} is imposed on the
cyclic task space $N=\mathcal C_G(M)$. Although
\prettyref{lem:cyclic-orbit} describes this space using the unpruned
orbit over any positive interval, its definition is independent of the
comparison horizon. The proof of \prettyref{thm:2-2} evaluates the
perturbation only on the individual vectors $S(s)v$, with $v\in M$ and
$0\le s\le\tau$. One may therefore use instead any closed
$G$-invariant subspace $\widetilde N\subseteq K$ such that
$M\subseteq\widetilde N$. If there exists
$\epsilon\in\left[0,1\right)$ with
\[
\left\Vert \left(I-P\right)T^{*}u\right\Vert _{H}
\le\epsilon\left\Vert T^{*}u\right\Vert _{H},
\quad\forall u\in\widetilde N,
\]
then the proof of \prettyref{thm:2-2} goes through with $N$ replaced
by $\widetilde N$, and \prettyref{eq:2-4} holds with $\lambda_N$
replaced by $\Vert G|_{\widetilde N}\Vert$. In every dimension, the
minimal closed $G$-invariant choice is $\mathcal C_G(M)$. Passing to a
larger $\widetilde N$ may be convenient, but it generally strengthens
the capture hypothesis and may increase the spectral constant. 
}
\end{equation} 

and set
\begin{equation}
\lambda_{N}:=\Vert G|_{N}\Vert.\label{eq:2-3}
\end{equation}

\begin{thm}
\label{thm:2-2}Under the above assumptions,
\begin{align}
\sup_{t\in\left[0,\tau\right]}\left\Vert \left(S_{P}\left(t\right)-S\left(t\right)\right)\big|_{M}\right\Vert  & \le\epsilon\min\left\{ \tau\sqrt{\left\Vert G-G_{P}\right\Vert \lambda_{N}},\sqrt{\tau\left\Vert G-G_{P}\right\Vert /2}\right\} .\label{eq:2-4}
\end{align}
\end{thm}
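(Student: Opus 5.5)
The plan is to compare the two semigroups through Duhamel's formula. I then estimate the perturbation term $Q_P=G-G_P$ only along the unpruned orbit $S(s)v$, $v\in M$. By \prettyref{lem:cyclic-orbit} this orbit stays inside $N=\mathcal C_G(M)$, so the capture hypothesis \prettyref{eq:2-2} applies there.

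\textbf{Step 1: Duhamel's formula.} Fix $v\in M$ and $0\le t\le\tau$. Since $G$ and $G_P$ are bounded, $s\mapsto S_P(t-s)S(s)v$ is differentiable on $[0,t]$, with derivative
\[
S_P(t-s)(G_P-G)S(s)v=-S_P(t-s)Q_PS(s)v .
\]
Integrating from $0$ to $t$ gives
\[
\bigl(S_P(t)-S(t)\bigr)v=\int_0^t S_P(t-s)\,Q_P\,S(s)v\,ds .
\]
Since $G_P=TPT^{*}\ge0$, the semigroup $S_P$ is a contraction. Hence
\[
\bigl\|(S_P(t)-S(t))v\bigr\|\le\int_0^t\|Q_PS(s)v\|\,ds .
\]

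\textbf{Step 2: pointwise bound on the integrand.} Write $Q_Pu=T(I-P)\cdot(I-P)T^{*}u$. Since $\|T(I-P)\|^{2}=\|T(I-P)T^{*}\|=\|Q_P\|$, we get
\[
\|Q_Pu\|\le\|Q_P\|^{1/2}\,\|(I-P)T^{*}u\|\le\epsilon\,\|Q_P\|^{1/2}\,\|T^{*}u\|
\quad\text{for } u\in N.
\]
By \prettyref{lem:cyclic-orbit}, $u=S(s)v$ lies in $N$. Also $\|T^{*}u\|^{2}=\langle Gu,u\rangle=\|G^{1/2}u\|^{2}$. Therefore
\[
\|(S_P(t)-S(t))v\|\le\epsilon\,\|Q_P\|^{1/2}\int_0^t\|G^{1/2}S(s)v\|\,ds .
\]
It remains to bound this integral in two ways.

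\textbf{Step 3: the two bounds.} For the first bound, $N$ reduces $G$ and $S(s)$ is a contraction, so \prettyref{eq:cyclic-half-norm} gives $\|G^{1/2}S(s)v\|\le\lambda_N^{1/2}\|v\|$. The integral is then at most $t\lambda_N^{1/2}\|v\|\le\tau\lambda_N^{1/2}\|v\|$.

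For the second bound, Cauchy--Schwarz gives
\[
\int_0^t\|G^{1/2}S(s)v\|\,ds\le t^{1/2}\Bigl(\int_0^t\langle GS(s)v,S(s)v\rangle\,ds\Bigr)^{1/2}.
\]
Now use the energy identity
\[
\frac{d}{ds}\|S(s)v\|^{2}=-2\langle GS(s)v,S(s)v\rangle .
\]
It shows the inner integral equals $\tfrac12\bigl(\|v\|^{2}-\|S(t)v\|^{2}\bigr)\le\tfrac12\|v\|^{2}$. So the integral is at most $(\tau/2)^{1/2}\|v\|$.

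Combining the two bounds, taking the minimum, and then the supremum over $\|v\|\le1$ in $M$ and over $t\in[0,\tau]$ yields \prettyref{eq:2-4}.

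The only genuinely delicate point is Step 2, namely the factorization $Q_P=T(I-P)\cdot(I-P)T^{*}$. It lets one factor carry $\|Q_P\|^{1/2}$ and the other carry the $\epsilon$-fraction of $\|T^{*}u\|$. This factorization works only because the hypothesis is checked on vectors $S(s)v$ that remain in $N$. The energy identity behind the $\sqrt{\tau}$ bound is routine.
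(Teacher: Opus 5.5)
Your proposal is correct and follows essentially the same route as the paper's proof: the Duhamel formula with the contractivity of $S_P$, the factorization $Q_P=[T(I-P)][T(I-P)]^{*}$ giving $\|Q_Pu\|\le\epsilon\sqrt{\|Q_P\|}\,\|G^{1/2}u\|$ on the orbit $S(s)v\in N$, and then the two bounds on $\int_0^t\|G^{1/2}S(s)v\|\,ds$ via $\lambda_N$ and via Cauchy--Schwarz with the energy identity. The only difference is that you verify the Duhamel formula explicitly by differentiating $s\mapsto S_P(t-s)S(s)v$, which the paper simply cites.
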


\begin{proof}
For clarity, set again
\[
Q_{P}:=G-G_{P}=T(I-P)T^{*}.
\]
Since $G\ge0$ and $G_{P}\ge0$, both $S(t)$ and $S_{P}(t)$ are
contraction semigroups. By Duhamel's formula,
\[
S_{P}\left(t\right)-S\left(t\right)
=\int^{t}_{0}S_{P}\left(t-s\right)Q_{P}S\left(s\right)ds.
\]
Hence for every $v\in M$ and $t\in[0,\tau]$,
\begin{equation}
\left\Vert \left(S_{P}\left(t\right)-S\left(t\right)\right)v\right\Vert _{K}\le\int^{t}_{0}\left\Vert Q_{P}S\left(s\right)v\right\Vert _{K}ds.\label{eq:2-5}
\end{equation}
Since $I-P$ is an orthogonal projection,
\[
Q_{P}=\big[T(I-P)\big]\big[T(I-P)\big]^{*}.
\]
Therefore, for every $u\in K$,
\begin{equation}
\left\Vert Q_{P}u\right\Vert _{K}\le\left\Vert T(I-P)\right\Vert \left\Vert \left(I-P\right)T^{*}u\right\Vert _{H}=\sqrt{\left\Vert Q_{P}\right\Vert }\left\Vert \left(I-P\right)T^{*}u\right\Vert _{H}.\label{eq:2-6}
\end{equation}
If $u\in N$, then by \prettyref{eq:2-2},
\[
\left\Vert \left(I-P\right)T^{*}u\right\Vert _{H}\le\epsilon\left\Vert T^{*}u\right\Vert _{H}.
\]
Also,
\[
\left\Vert T^{*}u\right\Vert ^{2}_{H}=\left\langle u,TT^{*}u\right\rangle _{K}=\left\langle u,Gu\right\rangle _{K}=\Vert G^{1/2}u\Vert^{2}_{K}.
\]
Thus,
\[
\left\Vert \left(I-P\right)T^{*}u\right\Vert _{H}\le\epsilon\Vert G^{1/2}u\Vert_{K}.
\]
Combining this with \prettyref{eq:2-6}, we obtain
\begin{equation}
\left\Vert Q_{P}u\right\Vert _{K}\le\epsilon\sqrt{\left\Vert Q_{P}\right\Vert }\Vert G^{1/2}u\Vert_{K}\label{eq:2-7},
\end{equation}
for all $u\in N$.
Now fix $v\in M$ and $t\in[0,\tau]$. For each $s\in[0,t]$,
$S\left(s\right)v\in\mathcal C_G(M)=N$ by
\prettyref{lem:cyclic-orbit}.
Applying \prettyref{eq:2-7} to $u=S\left(s\right)v$ therefore gives the
pointwise estimate
\[
\left\Vert Q_{P}S\left(s\right)v\right\Vert _{K}
\le\epsilon\sqrt{\left\Vert Q_{P}\right\Vert }
\Vert G^{1/2}S\left(s\right)v\Vert_{K}.
\]
Substituting this estimate into \prettyref{eq:2-5}, we obtain
\begin{equation}
\left\Vert \left(S_{P}\left(t\right)-S\left(t\right)\right)v\right\Vert _{K}\le\epsilon\sqrt{\left\Vert Q_{P}\right\Vert }\int^{t}_{0}\Vert G^{1/2}S\left(s\right)v\Vert_{K}ds.\label{eq:2-8}
\end{equation}

We now estimate the integral in \eqref{eq:2-8} in two different ways.
First, since $S\left(s\right)v\in N$,
\[
\Vert G^{1/2}S(s)v\Vert_{K}\le\Vert G^{1/2}|_{N}\Vert\left\Vert S\left(s\right)v\right\Vert _{K}\le\sqrt{\lambda_{N}}\left\Vert v\right\Vert _{K},
\]
using \prettyref{eq:cyclic-half-norm}, \prettyref{eq:2-3}, and the
fact that $S\left(s\right)$ is a contraction. Substituting into
\prettyref{eq:2-8}, we obtain
\begin{equation}
\left\Vert \left(S_{P}\left(t\right)-S\left(t\right)\right)v\right\Vert _{K}\le\epsilon t\sqrt{\left\Vert Q_{P}\right\Vert \lambda_{N}}\left\Vert v\right\Vert _{K}.\label{eq:2-9}
\end{equation}
Second, by Cauchy--Schwarz,
\[
\int^{t}_{0}\Vert G^{1/2}S(s)v\Vert_{K}ds\le\sqrt{t}\left(\int^{t}_{0}\Vert G^{1/2}S(s)v\Vert^{2}_{K}ds\right)^{1/2}.
\]
Now,
\[
\frac{d}{ds}\left\Vert S\left(s\right)v\right\Vert ^{2}_{K}=-2\left\langle S\left(s\right)v,GS\left(s\right)v\right\rangle _{K}=-2\Vert G^{1/2}S\left(s\right)v\Vert^{2}_{K},
\]
and so
\[
\int^{t}_{0}\Vert G^{1/2}S\left(s\right)v\Vert^{2}_{K}ds=\frac{1}{2}\left(\left\Vert v\right\Vert ^{2}_{K}-\left\Vert S\left(t\right)v\right\Vert ^{2}_{K}\right)\le\frac{1}{2}\left\Vert v\right\Vert ^{2}_{K}.
\]
Therefore,
\[
\int^{t}_{0}\Vert G^{1/2}S\left(s\right)v\Vert_{K}ds\le\sqrt{\frac{t}{2}}\left\Vert v\right\Vert _{K}.
\]
Substituting into \prettyref{eq:2-8}, we get
\begin{equation}
\left\Vert \left(S_{P}\left(t\right)-S\left(t\right)\right)v\right\Vert _{K}\le\epsilon\sqrt{\frac{t\left\Vert Q_{P}\right\Vert }{2}}\left\Vert v\right\Vert _{K}.\label{eq:2-10}
\end{equation}
Taking the operator norm over unit vectors $v\in M$ in \prettyref{eq:2-9}
and \prettyref{eq:2-10}, and then taking the supremum over $t\in\left[0,\tau\right]$,
gives \prettyref{eq:2-4}.
\end{proof}

\begin{rem}
\label{rem:2-2}
The projection $P$ in \prettyref{eq:2-2} models
 pruning, or freezing, of active parameter directions. Indeed, the
directions in $ran P$ are retained, while those 
in $ker P$ are completely removed. More generally, one
may replace $P$ by a positive contraction $A$ on $H$, with $0\le A\le I$,
and define
\[
G_{A}:=TAT^{*},\quad Q_{A}:=G-G_{A}=T(I-A)T^{*},\quad
S_{A}\left(t\right):=e^{-tG_{A}},\qquad t\ge0.
\]
In this form, the model allows ``soft" pruning, in the sense that
parameter directions are not only kept or removed, but may be attenuated.
The proof of Theorem~\ref{thm:2-2} still goes through when one replaces \prettyref{eq:2-2} with the following condition
\begin{equation}
\Vert\left(I-A\right)^{1/2}T^{*}u\Vert_{H}\le\epsilon\left\Vert T^{*}u\right\Vert _{H},\quad\forall u\in N,\label{eq:2-11}
\end{equation}
and $G_{P}$, $Q_{P}$, and
$S_{P}\left(t\right)$ with $G_{A}$, $Q_{A}$, and
$S_{A}\left(t\right)$, respectively. Indeed,
\[
Q_{A}=\big[T(I-A)^{1/2}\big]\big[T(I-A)^{1/2}\big]^{*},
\]
so that 
\begin{equation}
\left\langle u,\left(G-G_{A}\right)u\right\rangle _{K}\le\epsilon^{2}\left\langle u,Gu\right\rangle _{K}.\quad\forall u\in N,\label{eq:2-12}
\end{equation}
Since
\[
\left\langle u,\left(G-G_{A}\right)u\right\rangle _{K}=\left\langle T^{*}u,\left(I-A\right)T^{*}u\right\rangle _{H}=\Vert\left(I-A\right)^{1/2}T^{*}u\Vert^{2}_{H},
\]
we have
\[
\sup_{0\le t\le\tau}
\|(S_{A}(t)-S(t))|_{M}\|
\le
\epsilon\min\left\{
\tau\sqrt{\|Q_{A}\|\lambda_{N}},
\sqrt{\tau\|Q_{A}\|/2}
\right\}.
\]
\end{rem}

\begin{thm}
\label{thm:2-4} Assume that $K$ is a finite-dimensional Hilbert-space. Let
\[
G=\sum^{q}_{\alpha=1}\lambda_{\alpha}E_{\alpha}
\]
be the spectral decomposition of $G = TT^{*}$, where $\lambda_{1},\dots,\lambda_{q}$
are the distinct eigenvalues of $G$ and $E_{1},\dots,E_{q}$ are
the corresponding orthogonal spectral projections.
Let $M$ be a subspace of $K$. Then
\begin{equation}
\mathcal C_G(M)=\bigoplus^{q}_{\alpha=1}E_{\alpha}M.\label{eq:2-13}
\end{equation}
Moreover, for every $\sigma>0$, finitely many semigroup snapshots in
$[0,\sigma]$ span the entire cyclic task space. More precisely, if
$q=1$, then $\mathcal C_G(M)=S(0)M=M$, while if $q\ge2$ and
\[
t_j:=(j-1)\frac{\sigma}{q-1},\qquad j=1,\dots,q,
\]
then
\begin{equation}
\mathcal C_G(M)
=
\operatorname{span}\left\{S(t_j)M:1\le j\le q\right\}.
\label{eq:finite-snapshots}
\end{equation}
\end{thm}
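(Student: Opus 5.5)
We prove \prettyref{eq:2-13} first and then the snapshot statement \prettyref{eq:finite-snapshots}. In finite dimensions every subspace is closed, so $\mathcal C_G(M)=\operatorname{span}\{G^{n}v:v\in M,\ n\ge0\}$.

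\emph{The inclusion $\subseteq$.} For $v\in M$ we have
\[
G^{n}v=\sum_{\alpha}\lambda_{\alpha}^{n}E_{\alpha}v\in\bigoplus_{\alpha}E_{\alpha}M .
\]

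\emph{The inclusion $\supseteq$.} Each spectral projection is a polynomial in $G$. Since the $\lambda_\alpha$ are distinct, Lagrange interpolation gives
\[
E_{\alpha}=p_{\alpha}(G),\qquad p_{\alpha}(x)=\prod_{\beta\ne\alpha}\frac{x-\lambda_{\beta}}{\lambda_{\alpha}-\lambda_{\beta}}.
\]
Hence $E_{\alpha}v\in\mathcal C_G(M)$ for every $v\in M$. Finally, the sum $\bigoplus_\alpha E_\alpha M$ is direct and orthogonal because $E_\alpha M\subseteq\operatorname{ran}E_\alpha$ and these ranges are mutually orthogonal.

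\emph{The snapshot statement.} If $q=1$, then $G=\lambda_1 I$ and $E_1=I$, so $\mathcal C_G(M)=E_1M=M=S(0)M$.

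If $q\ge2$, let $h=\sigma/(q-1)$ and $\mu_{\alpha}:=e^{-h\lambda_{\alpha}}$. The $\mu_\alpha$ are distinct because $x\mapsto e^{-hx}$ is injective and $h>0$. For $v\in M$,
\[
S(t_j)v=\sum_{\alpha}\mu_{\alpha}^{\,j-1}E_{\alpha}v,\qquad j=1,\dots,q .
\]
The inclusion $\operatorname{span}\{S(t_j)M\}\subseteq\mathcal C_G(M)$ follows from this formula together with \prettyref{eq:2-13}; it is also immediate from \prettyref{lem:cyclic-orbit}.

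For the reverse inclusion, the coefficient matrix $V=(\mu_{\alpha}^{\,j-1})_{j,\alpha}$ is a $q\times q$ Vandermonde matrix with distinct nodes, hence invertible. Let $c^{(\alpha)}_j$ denote the entries of $V^{-1}$. Then, for each $v\in M$,
\[
E_{\alpha}v=\sum_{j=1}^{q}c^{(\alpha)}_{j}S(t_j)v .
\]
An equivalent way to see this is through the interpolating polynomial $r_\alpha$ of degree $q-1$ with $r_\alpha(\mu_\beta)=\delta_{\alpha\beta}$, which satisfies $E_\alpha=r_\alpha(S(h))$ and $S(h)^{j-1}=S(t_j)$.

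\emph{Assembling the result.} Every element of $\bigoplus_\alpha E_\alpha M$ has the form $\sum_\alpha E_\alpha v_\alpha$ with $v_\alpha\in M$. By the previous step each term $E_\alpha v_\alpha$ lies in $\operatorname{span}\{S(t_j)M\}$, so their sum does too. Combined with \prettyref{eq:2-13}, this gives \prettyref{eq:finite-snapshots}.

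\emph{Main point requiring care.} There is no real obstacle. The essential point is that the nodes $\mu_\alpha$ are distinct, and this needs $h>0$, i.e.\ $\sigma>0$. One must also take care that the direct sum in \prettyref{eq:2-13} is read with $v$ allowed to vary independently in each summand: $E_\alpha M$ means $\{E_\alpha v_\alpha : v_\alpha\in M\}$, not a single $v$ split across all $\alpha$. This is exactly why the span over $v\in M$, rather than the orbit of one fixed vector, is needed.
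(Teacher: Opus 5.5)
Your proposal is correct and follows the paper's proof essentially step for step: $G^{n}v=\sum_\alpha\lambda_\alpha^{n}E_\alpha v$ gives one inclusion, the Lagrange-polynomial representation $E_\alpha=p_\alpha(G)$ gives the reverse inclusion, and inverting the Vandermonde matrix in the distinct nodes $e^{-\sigma\lambda_\alpha/(q-1)}$ gives the snapshot statement. The only cosmetic difference is that you also obtain $\operatorname{span}\{S(t_j)M\}\subseteq\mathcal C_G(M)$ directly from the spectral formula, whereas the paper simply cites \prettyref{lem:cyclic-orbit}.
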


\begin{proof}
Set $L:=\bigoplus_{\alpha=1}^{q}E_{\alpha}M$. For $v\in M$ and
$n\ge0$,
\[
G^nv=\sum_{\alpha=1}^{q}\lambda_\alpha^nE_\alpha v\in L,
\]
so $\mathcal C_G(M)\subseteq L$. Conversely, for each $\alpha$,
\[
E_\alpha
=
\prod_{\substack{1\le\beta\le q\\ \beta\ne\alpha}}
\frac{G-\lambda_\beta I}{\lambda_\alpha-\lambda_\beta}
\]
is a polynomial in $G$. Hence $E_\alpha M\subseteq\mathcal C_G(M)$,
which proves \prettyref{eq:2-13}.

It remains to verify the finite-snapshot statement. The case $q=1$
is immediate. Suppose that $q\ge2$ and let the $t_j$ be as in the
statement. For $v\in M$,
\[
S\left(t_{j}\right)v=\sum^{q}_{\beta=1}e^{-t_{j}\lambda_{\beta}}E_{\beta}v.
\]
Set $r_{\beta}:=e^{-\frac{\sigma}{q-1}\lambda_{\beta}}$. Since the
$\lambda_{\beta}$'s are are distinct, the numbers $r_{1},\dots,r_{q}$
are also distinct. Moreover, $e^{-t_{j}\lambda_{\beta}}=r^{j-1}_{\beta}$.
Hence,
\[
S\left(t_{j}\right)v=\sum^{q}_{\beta=1}r^{j-1}_{\beta}E_{\beta}v, \qquad \text{for }j=1,\dots,q.
\]
Let $V:=(r^{j-1}_{\beta})^{q}_{j,\beta=1}$. This is a Vandermonde
matrix. Since the numbers $r_{1},\dots,r_{q}$ are distinct, $V$
is invertible. Therefore the vectors $E_{1}v,\dots,E_{q}v$ are linear
combinations of the vectors $S\left(t_{1}\right)v,\dots,S\left(t_{q}\right)v$.
Thus every $E_\alpha v$ belongs to
$\operatorname{span}\{S(t_j)v:1\le j\le q\}$. By
\prettyref{eq:2-13}, this proves that $\mathcal C_G(M)$ is contained in
the right-hand side of \prettyref{eq:finite-snapshots}. The reverse
inclusion follows from \prettyref{lem:cyclic-orbit}.
\end{proof}

\begin{rem}
The estimate in \prettyref{thm:2-2} is most informative when the
$G$-invariant subspace generated by $M$ is a proper subspace of
$K$. Indeed, if $M=K$, then \prettyref{thm:2-4} gives
$\mathcal C_G(M)=K$,
so the hypothesis \prettyref{eq:2-2} must hold on all of $K$. In
that case, the theorem becomes a global perturbation statement. Thus
the point of the result is that one only needs control on the spectral
directions of $G$ that are coupled to $M$. In particular, a large
kernel of $G$ does not by itself make the cyclic space large; the zero
eigenspace contributes only to the projection of $M$ onto that eigenspace.
\end{rem}

\begin{cor}
\label{cor:2-6} Under the hypotheses of \prettyref{thm:2-4}, set
$N:=\mathcal C_G(M)$. Then
\begin{equation}
\lambda_N=\Vert G|_{N}\Vert
=\max\left\{ \lambda_{\alpha}:E_{\alpha}M\ne\left\{ 0\right\} \right\}.
\label{eq:2-14}
\end{equation}
with the convention that the maximum is $0$ when $M=\{0\}$.
\end{cor}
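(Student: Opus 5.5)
The plan is to read off the norm of $G|_N$ directly from the decomposition \prettyref{eq:2-13} of \prettyref{thm:2-4}, namely $N=\mathcal C_G(M)=\bigoplus_{\alpha=1}^{q}E_\alpha M$. The degenerate case $M=\{0\}$ is immediate: then $N=\{0\}$, so $\Vert G|_N\Vert=0$, which matches the stated convention. From now on assume $M\ne\{0\}$. Since $\sum_\alpha E_\alpha=I$, some $E_\alpha M$ is nonzero, so the index set
\[
A:=\{\alpha: E_\alpha M\ne\{0\}\}
\]
is nonempty and the maximum in \prettyref{eq:2-14} is well defined.

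The first step is to note that each summand $E_\alpha M$ lies in the eigenspace $\operatorname{ran}E_\alpha$. Hence $G$ acts on $E_\alpha M$ as multiplication by $\lambda_\alpha$. Moreover, the summands are mutually orthogonal, because the $E_\alpha$ are orthogonal projections with $E_\alpha E_\beta=0$ for $\alpha\ne\beta$. So for $u\in N$ we can write $u=\sum_{\alpha\in A}u_\alpha$ with $u_\alpha\in E_\alpha M$, which gives
\[
Gu=\sum_{\alpha\in A}\lambda_\alpha u_\alpha
\qquad\text{and}\qquad
\Vert Gu\Vert^2=\sum_{\alpha\in A}\lambda_\alpha^2\Vert u_\alpha\Vert^2 .
\]

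The second step is the upper bound. All $\lambda_\alpha\ge0$ because $G=TT^*\ge0$. Using Pythagoras, the identity above yields $\Vert Gu\Vert\le\max_{\alpha\in A}\lambda_\alpha\,\Vert u\Vert$, so $\Vert G|_N\Vert\le\max_{\alpha\in A}\lambda_\alpha$.

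The third step is the matching lower bound. Choose $\alpha^*\in A$ attaining the maximum and pick a unit vector $u\in E_{\alpha^*}M\subseteq N$. Then $Gu=\lambda_{\alpha^*}u$, so $\Vert G|_N\Vert\ge\lambda_{\alpha^*}$. Combining the two bounds gives \prettyref{eq:2-14}.

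I do not expect a genuine obstacle. The only point needing care is that the direct sum in \prettyref{eq:2-13} is orthogonal, and this follows from the orthogonality of the spectral projections. One should also observe that the index $\alpha$ with $\lambda_\alpha=0$ may belong to $A$ without affecting the maximum unless it is the only element of $A$; in that case both sides equal $0$, which is consistent.
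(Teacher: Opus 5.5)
Your proof is correct and follows essentially the same route as the paper. Both arguments read off $\Vert G|_N\Vert$ from the orthogonal decomposition \eqref{eq:2-13}, using that each nonzero $E_\alpha M$ lies in the $\lambda_\alpha$-eigenspace; the paper states the conclusion by identifying $\sigma(G|_N)$ with $\{\lambda_\alpha : E_\alpha M\neq\{0\}\}$ and taking the norm of the positive operator $G|_N$, whereas you verify the matching upper and lower norm bounds directly.
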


\begin{proof}
By \prettyref{thm:2-4}, $N=\bigoplus^{q}_{\alpha=1}E_{\alpha}M$,
and each nonzero subspace $E_{\alpha}M$ is contained in the eigenspace
$ran E_{\alpha}$, on which $G$ acts as multiplication
by $\lambda_{\alpha}$. Hence, the spectrum of $G|_{N}$ consists
of those $\lambda_{\alpha}$ for which $E_{\alpha}M\ne\left\{ 0\right\} $,
and the norm of this positive operator is their maximum. If $M=\{0\}$,
then $N=\{0\}$ and $\lambda_N=0$.
\end{proof}

\section{Refinement on the task space}\label{sec:4}

We now work in the finite-dimensional setting and split the perturbation
into two parts: the part that stays diagonal with respect to the spectral
decomposition of $G$, and the part that mixes different spectral
modes. This gives a complementary comparison on the minimal
$G$-invariant subspace generated by $M$. Unlike \prettyref{thm:2-2},
the results below do not assume the relative task-capture condition
\prettyref{eq:2-2}; they instead use the invariance
\prettyref{eq:3-2} and quantify diagonal attenuation and off-diagonal
mixing through $\delta$ and $\rho$.
Throughout this section, assume that $K$ is finite-dimensional and that $M\ne\{0\}$. Let
\[
N:=\mathcal C_G(M).
\]
The cyclic task space $N$ is independent of the comparison horizon;
every occurrence of $\tau$ below refers only to the interval on which
the estimate is evaluated.
By \prettyref{thm:2-4},
\[
N=\bigoplus_{\alpha}E_{\alpha}M,
\]
and by \prettyref{cor:2-6},
\begin{equation}
\lambda_{N}:=\Vert G|_{N}\Vert=\max\left\{ \lambda_{\alpha}:E_{\alpha}M\ne\left\{ 0\right\} \right\} .\label{eq:3-1}
\end{equation}
Assume also that
\begin{equation}
G_{P}N\subseteq N.\label{eq:3-2}
\end{equation}
Under this invariance assumption, write
\begin{equation}
D:=\sum_{\alpha}E_{\alpha}G_{P}E_{\alpha}|_{N},\quad R:=\sum_{\alpha\ne\beta}E_{\alpha}G_{P}E_{\beta}|_{N}.\label{eq:3-3}
\end{equation}
Then
\begin{equation}
G_{P}|_{N}=D+R.\label{eq:3-4}
\end{equation}
The operator $D$ is the block diagonal part of $G_{P}$ relative
to the spectral decomposition of $G|_{N}$, while $R$ is the off-diagonal
mixing part.

\begin{thm}
\label{thm:3-1} Suppose that there exists $\delta\in\left[0,1\right]$ such that
\begin{equation}
0\le G|_{N}-D\le\delta G|_{N}.\label{eq:3-5}
\end{equation}
Set
\begin{equation}
\rho:=\left\Vert R\right\Vert .\label{eq:3-6}
\end{equation}
Then for every $t\ge0$,
\begin{equation}
\left\Vert \left(S_{P}\left(t\right)-S\left(t\right)\right)|_{M}\right\Vert \le1-e^{-t\delta\lambda_{N}}+t\rho.\label{eq:3-7}
\end{equation}
Since the right-hand side of \prettyref{eq:3-7} is nondecreasing in
$t$, this estimate also holds uniformly on every interval $[0,\tau]$,
with the right-hand side evaluated at $t=\tau$.
Since $1-e^{-x}\le x$ for $x\ge0$, \prettyref{eq:3-7} also yields
\begin{equation}
\left\Vert \left(S_{P}\left(t\right)-S\left(t\right)\right)|_{M}\right\Vert \le t\left(\delta\lambda_{N}+\rho\right),\quad t\ge0.\label{eq:3-9}
\end{equation}
\end{thm}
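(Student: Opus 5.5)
The plan is to work entirely inside the finite-dimensional space $N=\mathcal C_G(M)=\bigoplus_\alpha E_\alpha M$. By \prettyref{lem:cyclic-orbit}, $N$ is invariant under $G$, and by \prettyref{eq:3-2} it is invariant under $G_P$. Hence both $S(t)$ and $S_P(t)$ leave $N$ invariant and restrict there to $e^{-tG|_N}$ and $e^{-tG_P|_N}=e^{-t(D+R)}$. Since $M\subseteq N$, it suffices to bound $\|e^{-t(D+R)}-e^{-tG|_N}\|$ as an operator on $N$. I will insert the intermediate semigroup $e^{-tD}$ and use the triangle inequality:
\[
\|e^{-t(D+R)}-e^{-tG|_N}\|\le\|e^{-t(D+R)}-e^{-tD}\|+\|e^{-tD}-e^{-tG|_N}\|.
\]

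\emph{Step 1: structural facts about $D$.} Since $N=\bigoplus_\alpha E_\alpha M$, we have $E_\alpha N=E_\alpha M\subseteq N$, and $G$ acts on $E_\alpha N$ as $\lambda_\alpha I$. For $u\in N$, invariance gives $G_PE_\alpha u\in N$, and therefore $E_\alpha G_PE_\alpha u\in E_\alpha N\subseteq N$. So $D$ maps $N$ into $N$. It is block diagonal with blocks $E_\alpha G_PE_\alpha|_{E_\alpha N}$, each of which is self-adjoint and positive because $G_P\ge0$. Because $G|_N$ is scalar on every block, $D$ commutes with $G|_N$. I expect this bookkeeping (that $D$ and $R$ really are operators on $N$, and that $D$ commutes with $G|_N$) to be the main point needing care. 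It is also exactly where the invariance \prettyref{eq:3-2} and the decomposition \prettyref{eq:2-13} enter.

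\emph{Step 2: diagonal attenuation term.} Since $D$ and $G|_N$ commute, write
\[
e^{-tD}-e^{-tG|_N}=e^{-tD}\bigl(I-e^{-t(G|_N-D)}\bigr).
\]
The first factor is a contraction since $D\ge0$. By \prettyref{eq:3-5} together with $G|_N\le\lambda_N I$, the operator $X:=G|_N-D$ satisfies $0\le X\le\delta\lambda_N I$. The spectral theorem then gives $\|I-e^{-tX}\|\le 1-e^{-t\delta\lambda_N}$, because $x\mapsto1-e^{-tx}$ is increasing on $[0,\infty)$.

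\emph{Step 3: mixing term.} Apply Duhamel's formula to the pair $D+R$ and $D$:
\[
e^{-t(D+R)}-e^{-tD}=-\int_0^t e^{-(t-s)(D+R)}\,R\,e^{-sD}\,ds .
\]
Both semigroups are contractions, since $D+R=G_P|_N\ge0$ and $D\ge0$. Hence the norm of this term is at most $t\|R\|=t\rho$. Adding the bounds from Steps 2 and 3 and restricting to $M$ gives \prettyref{eq:3-7}. The right-hand side of \prettyref{eq:3-7} is nondecreasing in $t$, which yields the uniform statement on $[0,\tau]$. Finally, the inequality $1-e^{-x}\le x$ gives \prettyref{eq:3-9}.
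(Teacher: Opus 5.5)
Your proposal is correct and follows the paper's proof essentially step for step. It uses the same triangle inequality through $e^{-tD}$, the same Duhamel bound $t\rho$ for the mixing term, and the same commutation of $D$ with $G|_N$ for the attenuation term; the only cosmetic difference is that you factor $e^{-tG|_N}=e^{-tD}e^{-t(G|_N-D)}$ and apply the spectral theorem to $G|_N-D$, whereas the paper diagonalizes $G|_N$ and $D$ simultaneously and bounds $|e^{-t\mu_j}-e^{-t\lambda_j}|$ mode by mode.
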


\begin{proof}
Since $M\subseteq N$, it is enough to estimate the difference on
$N$. By \prettyref{eq:3-2}, the operator $G_{P}|_{N}$ is well defined,
and by \prettyref{eq:3-4},
\begin{equation}
\left\Vert \left(S_{P}\left(t\right)-S\left(t\right)\right)|_{M}\right\Vert \le\Vert e^{-tG_{P}|_{N}}-e^{-tD}\Vert+\Vert e^{-tD}-e^{-tG|_{N}}\Vert.\label{eq:3-10}
\end{equation}
We first estimate the off-diagonal part. Since $G_{P}|_{N}\ge0$ and
$D\ge0$, both $e^{-tG_{P}|_{N}}$ and $e^{-tD}$ are contraction
semigroups on $N$. By Duhamel's formula,
\[
e^{-tG_{P}|_{N}}-e^{-tD}=-\int^{t}_{0}e^{-\left(t-s\right)G_{P}|_{N}}Re^{-sD}ds.
\]
Therefore,
\[
\Vert e^{-tG_{P}|_{N}}-e^{-tD}\Vert\le\int^{t}_{0}\Vert e^{-\left(t-s\right)G_{P}|_{N}}\Vert\left\Vert R\right\Vert \Vert e^{-sD}\Vert ds\le t\rho.
\]
Thus,
\begin{equation}
\Vert e^{-tG_{P}|_{N}}-e^{-tD}\Vert\le t\rho.\label{eq:3-11}
\end{equation}

We next estimate the diagonal part. By \prettyref{eq:3-3}, each spectral
subspace $E_{\alpha}N$ is invariant under $D$, and therefore $D$
commutes with every $E_{\alpha}\big|_{N}$. Since
\[
G|_{N}=\sum_{\alpha}\lambda_{\alpha}E_{\alpha}|_{N},
\]
it follows that
\begin{equation}
DG|_{N}=G|_{N}D.\label{eq:3-12}
\end{equation}
As $G|_{N}$ and $D$ are commuting self-adjoint operators on the finite-dimensional
space $N$, there exists an orthonormal basis of $N$ consisting of
common eigenvectors. Thus, we may write
\[
G|_{N}e_{j}=\lambda_{j}e_{j},\quad De_{j}=\mu_{j}e_{j},
\]
for some $\lambda_{j},\mu_{j}\ge0$. 

By \prettyref{eq:3-5},
\[
0\le\lambda_{j}-\mu_{j}\le\delta\lambda_{j}\le\delta\lambda_{N},
\]
for every $j$, where $\lambda_{N}$ is given by \prettyref{eq:3-1}.
Hence,
\[
\left|e^{-t\mu_{j}}-e^{-t\lambda_{j}}\right|=e^{-t\mu_{j}}\left(1-e^{-t\left(\lambda_{j}-\mu_{j}\right)}\right)\le1-e^{-t\left(\lambda_{j}-\mu_{j}\right)}\le1-e^{-t\delta\lambda_{N}}.
\]
Taking the supremum over $j$, we obtain
\begin{equation}
\Vert e^{-tD}-e^{-tG|_{N}}\Vert\le1-e^{-t\delta\lambda_{N}},\label{eq:3-13}
\end{equation}
and combining \prettyref{eq:3-10}, \prettyref{eq:3-11}, and \prettyref{eq:3-13}
gives \prettyref{eq:3-7}. Since its right-hand side is nondecreasing in
$t$, the stated uniform consequence follows by taking the supremum over
$t\in[0,\tau]$. Finally, \prettyref{eq:3-9} follows from
the elementary inequality $1-e^{-x}\le x$ for $x\ge0$.
\end{proof}

\begin{rem}
\label{rem:3-2} The estimate in \prettyref{thm:3-1} separates the two
effects of pruning on the task space $N$. The term $1-e^{-t\delta\lambda_{N}}$
comes from the block diagonal part $D$ and measures the attenuation along
the spectral modes of $G$. The term $t\rho$ comes from the off-diagonal
part $R$ and measures the mixing between distinct $G$-eigenspaces. In
particular, if $R=0$, then $G_{P}|_{N}$ commutes with $G|_{N}$,
and \prettyref{thm:3-1} reduces to a modewise comparison. Under
\prettyref{eq:3-2}, condition \prettyref{eq:3-5} is always available
with $\delta=1$, since
\[
G|_{N}-D=\sum_{\alpha}E_{\alpha}Q_{P}E_{\alpha}|_{N},
\qquad 0\le Q_{P}\le G.
\]
Thus a value $\delta<1$ supplies additional quantitative information
rather than an existence condition.
\end{rem}

\begin{cor}
\label{cor:3-3} Under the hypotheses of \prettyref{thm:3-1}, if
$R=0$, then
\begin{equation}
\left\Vert \left(S_{P}\left(t\right)-S\left(t\right)\right)|_{M}\right\Vert \le1-e^{-t\delta\lambda_{N}},\quad t\ge0.\label{eq:3-14}
\end{equation}
Since the right-hand side is nondecreasing in $t$, the corresponding
uniform estimate on every interval $[0,\tau]$ follows by evaluating it
at $t=\tau$.
\end{cor}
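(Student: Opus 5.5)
This is a direct specialization of \prettyref{thm:3-1}, and I would obtain it by setting $\rho=0$ in \prettyref{eq:3-7}.

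First, I check that the hypotheses of \prettyref{cor:3-3} are exactly those of \prettyref{thm:3-1}:
\begin{itemize}
\item $K$ is finite-dimensional and $M\ne\{0\}$;
\item the invariance condition \prettyref{eq:3-2} holds;
\item the bound \prettyref{eq:3-5} holds with some $\delta\in[0,1]$.
\end{itemize}
The only additional assumption is $R=0$. By \prettyref{eq:3-6}, this means $\rho=\Vert R\Vert=0$.

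Next, I substitute $\rho=0$ into \prettyref{eq:3-7}. This gives, for every $t\ge0$,
\[
\left\Vert \left(S_{P}(t)-S(t)\right)|_{M}\right\Vert \le 1-e^{-t\delta\lambda_{N}},
\]
which is \prettyref{eq:3-14}.

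There is also a direct route that does not pass through the theorem. When $R=0$, \prettyref{eq:3-4} gives $G_{P}|_{N}=D$. Then \prettyref{eq:3-11} is trivial, and the whole estimate reduces to the diagonal bound \prettyref{eq:3-13} applied on $N\supseteq M$.

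For the uniform statement on $[0,\tau]$, I note that $t\mapsto 1-e^{-t\delta\lambda_{N}}$ is nondecreasing, because $\delta\lambda_{N}\ge0$. Taking the supremum over $t\in[0,\tau]$ therefore amounts to evaluating the right-hand side at $t=\tau$.

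I do not expect any real obstacle. The only care needed is to confirm that $R=0$ is consistent with, and does not alter, the invariance hypothesis \prettyref{eq:3-2} on which $D$ and $R$ are defined. This is immediate, since both operators are defined on $N$ only under that assumption.
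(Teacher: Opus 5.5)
Your proposal is correct and follows the paper's route: the paper gives no separate proof because the corollary is exactly \prettyref{thm:3-1} with $\rho=\Vert R\Vert=0$. Your alternative observation is also correct: with $G_{P}|_{N}=D$ the mixing term in \prettyref{eq:3-10} vanishes and only \prettyref{eq:3-13} remains. The uniform statement on $[0,\tau]$ follows because $\delta\lambda_{N}\ge0$.
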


We now refine the mixing term in \prettyref{thm:3-1}.
Let $A:=\left\{ \alpha:E_{\alpha}M\neq\left\{ 0\right\} \right\} $ and
$N=\mathcal C_G(M)=\bigoplus_{\alpha\in A}E_{\alpha}M$. For
$\alpha,\beta\in A$, $\alpha\ne\beta$,
set
\[
R_{\alpha\beta}:=E_{\alpha}G_{P}E_{\beta}|_{E_{\beta}M}.
\]
Define
\begin{equation}
\rho_{\mathrm{row}}:=\max_{\alpha\in A}\sum_{\beta\in A,\beta\ne\alpha}\left\Vert R_{\alpha\beta}\right\Vert ,\quad\rho_{\mathrm{col}}:=\max_{\beta\in A}\sum_{\alpha\in A,\alpha\ne\beta}\left\Vert R_{\alpha\beta}\right\Vert .\label{eq:3-18}
\end{equation}
Further set
\begin{equation}
\rho_{\mathrm{blk}}:=\sqrt{\rho_{\mathrm{row}}\rho_{\mathrm{col}}}.\label{eq:3-19}
\end{equation}

\begin{thm}
\label{thm:3-6} Under the hypotheses of \prettyref{thm:3-1},
\begin{equation}
\left\Vert R\right\Vert \le\rho_{\mathrm{blk}}.\label{eq:3-20}
\end{equation}
Consequently, for every $t\ge0$,
\begin{equation}
\left\Vert \left(S_{P}\left(t\right)-S\left(t\right)\right)|_{M}\right\Vert \le1-e^{-t\delta\lambda_{N}}+t\rho_{\mathrm{blk}}.\label{eq:3-21}
\end{equation}
Since the right-hand side is nondecreasing in $t$, the corresponding
uniform estimate on every interval $[0,\tau]$ follows by evaluating it
at $t=\tau$.
\end{thm}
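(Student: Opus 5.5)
The plan is to prove \prettyref{eq:3-20} as a block Schur test for the block operator matrix of $R$ relative to the orthogonal decomposition $N=\bigoplus_{\alpha\in A}E_{\alpha}M$. Once \prettyref{eq:3-20} holds, \prettyref{eq:3-21} follows at once by substituting $\rho=\|R\|\le\rho_{\mathrm{blk}}$ into \prettyref{eq:3-7} of \prettyref{thm:3-1}. The uniform statement on $[0,\tau]$ again comes from monotonicity of the right-hand side in $t$.

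First I would set up the block structure. Since $N=\bigoplus_{\alpha\in A}E_{\alpha}M$ and $E_{\alpha}N=E_{\alpha}M$ by \prettyref{eq:2-13}, the spaces $E_{\alpha}M$ for $\alpha\in A$ are mutually orthogonal and span $N$. For $\beta\notin A$ we have $E_{\beta}N=\{0\}$. Hence, for $u\in N$, writing $u_{\beta}:=E_{\beta}u\in E_{\beta}M$, we get
\[
Ru=\sum_{\alpha\in A}\sum_{\beta\in A,\beta\ne\alpha}R_{\alpha\beta}u_{\beta}.
\]
Each $R_{\alpha\beta}$ maps $E_{\beta}M$ into $E_{\alpha}K$. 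The invariance \prettyref{eq:3-2} guarantees $G_{P}u_{\beta}\in N$, so $E_{\alpha}G_{P}u_{\beta}\in E_{\alpha}N=E_{\alpha}M$. The $\alpha$-component of $Ru$ therefore lies in $E_{\alpha}M$, and by orthogonality
\[
\|Ru\|^{2}=\sum_{\alpha}\Bigl\|\sum_{\beta\ne\alpha}R_{\alpha\beta}u_{\beta}\Bigr\|^{2}.
\]

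Next I would run the Schur test with weights $\|R_{\alpha\beta}\|$. For each $\alpha$, the triangle inequality gives $\|\sum_{\beta\ne\alpha}R_{\alpha\beta}u_{\beta}\|\le\sum_{\beta\ne\alpha}\|R_{\alpha\beta}\|\,\|u_{\beta}\|$. Applying Cauchy--Schwarz with the splitting $\|R_{\alpha\beta}\|=\|R_{\alpha\beta}\|^{1/2}\cdot\|R_{\alpha\beta}\|^{1/2}$ yields
\[
\Bigl(\sum_{\beta\ne\alpha}\|R_{\alpha\beta}\|\,\|u_{\beta}\|\Bigr)^{2}\le\rho_{\mathrm{row}}\sum_{\beta\ne\alpha}\|R_{\alpha\beta}\|\,\|u_{\beta}\|^{2}.
\]
Summing over $\alpha$ and exchanging the order of summation gives
\[
\|Ru\|^{2}\le\rho_{\mathrm{row}}\sum_{\beta}\|u_{\beta}\|^{2}\sum_{\alpha\ne\beta}\|R_{\alpha\beta}\|\le\rho_{\mathrm{row}}\rho_{\mathrm{col}}\|u\|^{2}.
\]
This is exactly $\|R\|\le\rho_{\mathrm{blk}}$.

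The only delicate point is the bookkeeping in the first step. One must check that the block decomposition of $R$ involves only indices in $A$, and that the output components lie in mutually orthogonal pieces, so that $\|Ru\|^{2}$ splits as a sum. This is where \prettyref{thm:2-4} and the invariance \prettyref{eq:3-2} are used. The orthogonality of the ranges $E_{\alpha}K$ alone would already suffice for the splitting, so even that step is mild. The rest is the standard Schur-test computation.
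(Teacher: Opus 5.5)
Your proposal is correct and is essentially the paper's argument: a Schur test for the block matrix $(\|R_{\alpha\beta}\|)_{\alpha,\beta\in A}$, followed by substituting $\rho\le\rho_{\mathrm{blk}}$ into \prettyref{eq:3-7}. The only difference is presentational: the paper bounds the bilinear form $\langle Rx,y\rangle$ by $\sum_{\alpha,\beta}a_{\alpha\beta}\|x_\beta\|\|y_\alpha\|$ and cites the scalar Schur test, whereas you estimate $\|Ru\|^{2}$ directly and prove the Schur bound inline by weighted Cauchy--Schwarz; you also make explicit the use of \prettyref{eq:3-2} to discard indices outside $A$, which the paper leaves implicit.
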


\begin{proof}
Write $x=\sum_{\beta\in A}x_{\beta}$, $y=\sum_{\alpha\in A}y_{\alpha}$ for $x,y\in N$,
where $x_{\beta}:=E_{\beta}x$, $y_{\alpha}:=E_{\alpha}y$. Since
$R$ has vanishing diagonal blocks,
\[
\left\langle Rx,y\right\rangle =\sum_{\alpha\in A}\sum_{\beta\in A,\beta\ne\alpha}\left\langle R_{\alpha\beta}x_{\beta},y_{\alpha}\right\rangle .
\]
Hence,
\[
\left|\left\langle Rx,y\right\rangle \right|\le\sum_{\alpha\in A}\sum_{\beta\in A,\beta\ne\alpha}\left\Vert R_{\alpha\beta}\right\Vert \left\Vert x_{\beta}\right\Vert \left\Vert y_{\alpha}\right\Vert .
\]
Set
\[
a_{\alpha\beta}:=\begin{cases}
\left\Vert R_{\alpha\beta}\right\Vert , & \alpha\ne\beta,\\
0, & \alpha=\beta.
\end{cases}
\]
By the Schur test for scalar matrices,
\[
\left\Vert \left(a_{\alpha\beta}\right)\right\Vert _{\ell^{2}\to\ell^{2}}\le\sqrt{\rho_{\mathrm{row}}\rho_{\mathrm{col}}}=\rho_{\mathrm{blk}}.
\]
Therefore,
\[
\left|\left\langle Rx,y\right\rangle \right| \le \sum\nolimits_{\alpha,\beta}a_{\alpha\beta}\left\Vert x_{\beta}\right\Vert \left\Vert y_{\alpha}\right\Vert \le\rho_{\mathrm{blk}}\left(\sum\nolimits_{\beta}\left\Vert x_{\beta}\right\Vert ^{2}\right)^{1/2}\left(\sum\nolimits_{\alpha}\left\Vert y_{\alpha}\right\Vert ^{2}\right)^{1/2}=\rho_{\mathrm{blk}}\left\Vert x\right\Vert \left\Vert y\right\Vert .
\]
Taking the supremum over unit vectors $x,y\in N$ gives \prettyref{eq:3-20}.
The bound \prettyref{eq:3-21} and its stated uniform consequence now
follow from \prettyref{thm:3-1}.
\end{proof}

\begin{rem}
\label{rem:3-7} The quantity $\rho_{\mathrm{blk}}$ resolves the
mixing term blockwise along the spectral decomposition of $G$. In
particular, \prettyref{thm:3-6} shows that the contribution of $R$
is small if, for each task-coupled $G$-mode, the total coupling to the
other task-coupled modes is small both row-wise and column-wise.
\end{rem}

\begin{cor}
\label{cor:3-8} Under the hypotheses of \prettyref{thm:3-1}, if
there exists $\eta\ge0$ such that
\begin{equation}
\sum_{\beta\in A,\beta\ne\alpha}\left\Vert E_{\alpha}G_{P}E_{\beta}|_{E_{\beta}M}\right\Vert \le\eta,\quad\alpha\in A,\label{eq:3-23}
\end{equation}
and
\begin{equation}
\sum_{\alpha\in A,\alpha\ne\beta}\left\Vert E_{\alpha}G_{P}E_{\beta}|_{E_{\beta}M}\right\Vert \le\eta,\quad\beta\in A,\label{eq:3-24}
\end{equation}
then
\begin{equation}
\left\Vert \left(S_{P}\left(t\right)-S\left(t\right)\right)|_{M}\right\Vert \le1-e^{-t\delta\lambda_{N}}+t\eta,\quad t\ge0.\label{eq:3-25}
\end{equation}
\end{cor}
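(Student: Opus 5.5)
The corollary follows directly from \prettyref{thm:3-6}. The only work is to check that the hypotheses \prettyref{eq:3-23} and \prettyref{eq:3-24} bound the mixing constants in \prettyref{eq:3-18}, and then substitute.

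First, I will identify the quantities in \prettyref{eq:3-23}--\prettyref{eq:3-24} with the blocks $R_{\alpha\beta}=E_{\alpha}G_{P}E_{\beta}|_{E_{\beta}M}$ defined before \prettyref{thm:3-6}. Indices outside $A$ play no role, since $E_{\beta}N=E_{\beta}M=\{0\}$ for $\beta\notin A$. Condition \prettyref{eq:3-23} states that every row sum $\sum_{\beta\in A,\beta\ne\alpha}\|R_{\alpha\beta}\|$ is at most $\eta$. Taking the maximum over $\alpha\in A$ gives $\rho_{\mathrm{row}}\le\eta$. In the same way, \prettyref{eq:3-24} gives $\rho_{\mathrm{col}}\le\eta$. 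Therefore
\[
\rho_{\mathrm{blk}}=\sqrt{\rho_{\mathrm{row}}\rho_{\mathrm{col}}}\le\sqrt{\eta\cdot\eta}=\eta .
\]

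Next, the hypotheses of \prettyref{thm:3-1} are assumed, so \prettyref{thm:3-6} applies and yields \prettyref{eq:3-21}:
\[
\|(S_P(t)-S(t))|_M\|\le 1-e^{-t\delta\lambda_N}+t\rho_{\mathrm{blk}}.
\]
For $t\ge0$ the map $x\mapsto 1-e^{-t\delta\lambda_N}+tx$ is nondecreasing in $x$. Replacing $\rho_{\mathrm{blk}}$ by the larger number $\eta$ therefore gives \prettyref{eq:3-25}. As in \prettyref{thm:3-6}, the uniform version on $[0,\tau]$ follows because the right-hand side is nondecreasing in $t$.

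I do not expect a genuine obstacle here. The one point that needs care is the domain of the blocks $R_{\alpha\beta}$, which are taken on $E_{\beta}M$ rather than on $\operatorname{ran}E_{\beta}$. This is consistent because $N=\bigoplus_{\alpha\in A}E_{\alpha}M$ by \prettyref{thm:2-4}, so $E_{\beta}N=E_{\beta}M$. Hence the blocks in the corollary are exactly those used in the Schur-test argument of \prettyref{thm:3-6}.
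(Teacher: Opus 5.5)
Your proposal is correct and follows the paper's own proof: the row and column hypotheses give $\rho_{\mathrm{row}}\le\eta$ and $\rho_{\mathrm{col}}\le\eta$, hence $\rho_{\mathrm{blk}}\le\eta$, and \prettyref{eq:3-25} then follows from \prettyref{thm:3-6}. Your extra remark that $E_{\beta}N=E_{\beta}M$, so the blocks on $E_{\beta}M$ are the right ones, is accurate and harmless.
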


\begin{proof}
Under \prettyref{eq:3-23}--\prettyref{eq:3-24}, we have $\rho_{\mathrm{row}}\le\eta$,
$\rho_{\mathrm{col}}\le\eta$, hence $\rho_{\mathrm{blk}}\le\eta$.
The conclusion follows from \prettyref{thm:3-6}.
\end{proof}

\section{Ces\`aro averages and ergodic comparison on the task space}
\label{sec:ergodic-task}

The estimates in Sections \ref{sec:3} and \ref{sec:4} control the
difference of the two semigroups at each time in a fixed interval. We
now pass to Ces\`aro averages. Set
\[
C_{L}:=\frac{1}{L}\int_{0}^{L}S(t)\,dt,
\qquad
C_{P,L}:=\frac{1}{L}\int_{0}^{L}S_{P}(t)\,dt,
\qquad L>0,
\]
where, as before, $S(t)=e^{-tG}$ and $S_{P}(t)=e^{-tG_{P}}$. Averages of
this kind are used here to quantify how well $C_{P,L}|_{M}$ approximates
$C_{L}|_{M}$ as $L$ grows and to identify the stationary component that
may remain in their difference. Such averages are classical in the mean ergodic theory of
$C_{0}$-semigroups; see, for example,
\cite{kido1984mean,albanese2012mean,barki2021uniform,gomilko2012bernstein}.

In our setting, the generator $G=TT^{*}$ is bounded, self-adjoint, and
positive, so the relevant ergodic limits can be derived directly by the
spectral calculus.
Throughout this section, $N\subseteq K$ denotes a closed $G$-invariant
subspace with $M\subseteq N$. The minimal such space is
$\mathcal C_G(M)$; retaining a general $N$ is convenient when a larger
common space is required for the pruned dynamics. The first results below concern averaged
tangent leakage; they depend only on the unpruned semigroup and do not
require the invariance hypothesis \prettyref{eq:3-2}. The comparison of
the averages $C_{P,L}$ and $C_{L}$ themselves does require
\prettyref{eq:3-2}, and is taken up in the second half of the section.
For exact hard truncation, the relative hypothesis \prettyref{eq:2-2}
may fail even on $\mathcal C_G(M)$, and hence may also fail on a larger
invariant space $N$, since the discarded tangent component need not be
uniformly small relative to \(\|T^{*}u\|_{H}\). We therefore work
with the following weaker hypothesis: We say that an orthogonal projector $P$ on $H$ satisfies the
\emph{mixed task-tail condition} on $N$ with constants
\((\alpha,\beta)\in[0,\infty)^{2}\) if
\begin{equation}
\label{eq:ergodic-mixed-def}
\|(I-P)T^{*}u\|_{H}^{2}
\le
\alpha\|T^{*}u\|_{H}^{2}+\beta\|u\|_{K}^{2},
\qquad u\in N.
\end{equation}
Equivalently,
\begin{equation}
\label{eq:ergodic-mixed-form}
\langle u,(G-G_{P})u\rangle_{K}
\le
\alpha\langle u,Gu\rangle_{K}+\beta\|u\|_{K}^{2},
\qquad u\in N.
\end{equation}
In contrast with \prettyref{eq:2-2}, neither $\alpha$ nor $\beta$ is
assumed to be small unless explicitly stated. The constant $\alpha$
measures the relative part of the discarded tangent energy, while
$\beta$ records an additive task-space error. If the task space has a
lower spectral edge
\[
G|_{N}\ge \mu_{N}I_{N}
\qquad (\mu_{N}>0),
\]
then
\[
\|T^{*}u\|_{H}^{2}=\langle u,Gu\rangle_{K}\ge \mu_{N}\|u\|_{K}^{2},
\qquad u\in N,
\]
and \prettyref{eq:ergodic-mixed-def} reduces to the effective pure-relative bound
\begin{equation}
\label{eq:ergodic-mixed-eff}
\|(I-P)T^{*}u\|_{H}^{2}
\le
\left(\alpha+\frac{\beta}{\mu_{N}}\right)\|T^{*}u\|_{H}^{2},
\qquad u\in N.
\end{equation}
Thus the additive term can be absorbed only when a positive lower task-energy
bound is available.

\begin{rem}[Motivation for the additive task-space tolerance]
\label{rem:mixed-task-tail-interpretation}
The term \(\beta\|u\|_{K}^{2}\) in the mixed task-tail condition \eqref{eq:ergodic-mixed-def} allows a two-term a priori estimate for hard
truncations in directions, where the \(G\)-energy
\(\langle u,Gu\rangle_{K}=\|T^{*}u\|_{H}^{2}\) does not uniformly
control the ambient task norm.

This contraction type term plus an  additive structure is analogous in spirit to the
restricted approximate invertibility condition (RAIC) in one-bit compressed
sensing, where a pure restricted isometry property (RIP)-type condition is replaced by a local estimate
on annuli with an additive error term
\cite{friedlander2021nbiht, chen2024optimal, abdalla2026robust, chen2026one, matsumoto2024binary, matsumoto2024robust}. 
It is also consistent with the
role of penalty or regularizing terms in variational, iterative 
estimation schemes, where a primary residual or data term is
supplemented by a stabilizing term controlling components; see, for example,
\cite{MR5062983,MR4831876,MR4770680}.  The usefulness of a possibly
non-small \(\beta\) is determined by the subsequent averaged estimates:
its contribution is propagated explicitly through quantities such as
\[
\frac{\beta}{L}\int_{0}^{L}\|S(t)v\|_{K}^{2}\,dt,
\]
and is effective precisely when this averaged task energy decays.
\end{rem}

We first record the frozen Ces\`aro limit, which depends only on the
unpruned semigroup.

\begin{thm}
\label{thm:ergodic-frozen}
Let
\[
C_{L}=\frac{1}{L}\int_{0}^{L}e^{-tG}\,dt,
\qquad G=TT^{*}.
\]
Then
\[
C_{L}\to P_{\ker G}=P_{\ker T^{*}}
\qquad\text{strongly on }K, \text{ as }L\to\infty.
\]
Consequently, for every \(v\in K\),
\[
(I-P)T^{*}C_{L}v\to0
\qquad\text{in }H, \text{ as }L\to\infty.
\]
More generally, if \(Y\) is a Hilbert space and \(U\in\mathcal B(K,Y)\) satisfies
\(U P_{\ker G}=0\), then
\[
U C_{L}v\to0
\qquad\text{for every }v\in K.
\]
\end{thm}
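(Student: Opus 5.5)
The plan is to use the spectral theorem for the bounded positive operator $G=TT^{*}$ and reduce everything to scalar functions. Let $E$ be the spectral measure of $G$, supported in $[0,\|G\|]$. The functional calculus gives $C_{L}=\phi_{L}(G)$ with
\[
\phi_{L}(\lambda):=\frac{1}{L}\int_{0}^{L}e^{-t\lambda}\,dt=
\begin{cases}
1, & \lambda=0,\\
\dfrac{1-e^{-L\lambda}}{L\lambda}, & \lambda>0.
\end{cases}
\]
This function satisfies $0\le\phi_{L}\le1$. As $L\to\infty$, $\phi_{L}(\lambda)\to\mathbf{1}_{\{0\}}(\lambda)$ pointwise. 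For each $v\in K$ we then have
\[
\|(C_{L}-E(\{0\}))v\|^{2}=\int_{[0,\|G\|]}|\phi_{L}(\lambda)-\mathbf{1}_{\{0\}}(\lambda)|^{2}\,d\langle E(\lambda)v,v\rangle .
\]
The integrand is bounded by $1$ and tends to $0$ pointwise, so dominated convergence with respect to the finite measure $\langle E(\cdot)v,v\rangle$ shows that this tends to $0$. Since $E(\{0\})$ is the projection onto $\ker G$, we get $C_{L}\to P_{\ker G}$ strongly.

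The identification $\ker G=\ker T^{*}$ is the standard computation $\langle Gu,u\rangle=\|T^{*}u\|^{2}$, which was already used in the proof of \prettyref{thm:2-2}. If $Gu=0$, this gives $T^{*}u=0$. The reverse inclusion is trivial.

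For the general statement, take $U\in\mathcal B(K,Y)$ with $UP_{\ker G}=0$. Then
\[
\|UC_{L}v\|=\|U(C_{L}-P_{\ker G})v\|\le\|U\|\,\|(C_{L}-P_{\ker G})v\|\to0.
\]
The special case follows by taking $Y=H$ and $U=(I-P)T^{*}$. Here $UP_{\ker G}=(I-P)T^{*}P_{\ker T^{*}}=0$ by the identification of the kernels just made.

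The only step requiring care is the strong convergence. Convergence cannot hold in operator norm unless $0$ is isolated in the spectrum of $G$ or is not in it at all. So the argument has to proceed vectorwise through dominated convergence against the scalar spectral measures, rather than through a uniform bound on $\phi_{L}-\mathbf{1}_{\{0\}}$. This is exactly where the lower spectral edge assumption used later in the paper would upgrade the result to the uniform rate $\sup_{\lambda\ge\mu}\phi_{L}(\lambda)\le1/(L\mu)$.
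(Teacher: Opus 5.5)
Your proof is correct and follows the paper's argument essentially line by line. The paper also writes $C_L=\psi_L(G)$ by the functional calculus and uses $0\le\psi_L\le1$ with pointwise convergence to $\mathbf{1}_{\{0\}}$ to get $\|C_Lv-P_{\ker G}v\|^{2}\to0$ against the scalar spectral measure; it then identifies $\ker G=\ker T^{*}$ via $\langle u,Gu\rangle=\|T^{*}u\|^{2}$ and deduces the leakage statements from strong convergence and boundedness. The only cosmetic differences are that you make the dominated-convergence step explicit, where the paper leaves it implicit, and that you derive the $(I-P)T^{*}$ case from the general $U$ case rather than the other way around.
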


\begin{proof}
Define
\[
\psi_{L}(\lambda):=
\begin{cases}
\dfrac{1-e^{-L\lambda}}{L\lambda}, & \lambda>0,\\[1ex]
1, & \lambda=0.
\end{cases}
\]
Since \(G\) is bounded, self-adjoint, and positive, the functional calculus gives
\[
C_{L}=\frac{1}{L}\int_{0}^{L}e^{-tG}\,dt=\psi_{L}(G).
\]
Also, \(0\le \psi_{L}(\lambda)\le1\) for all \(\lambda\ge0\), and
\(\psi_{L}(\lambda)\to 0\) for every \(\lambda>0\) while
\(\psi_{L}(0)=1\). Hence, for every \(v\in K\), the spectral theorem yields
\[
\|C_{L}v-P_{\ker G}v\|^{2}
=
\int_{[0,\|G\|]}
\bigl|\psi_{L}(\lambda)-\mathbf 1_{\{0\}}(\lambda)\bigr|^{2}
\,d\mu_{v}(\lambda)
\longrightarrow0,
\]
where \(\mu_{v}\) is the spectral measure of \(G\) associated with \(v\).
Thus \(C_{L}\to P_{\ker G}\) strongly.
Since
\[
\ker G
=
\{u\in K:\langle u,Gu\rangle=0\}
=
\{u\in K:\|T^{*}u\|^{2}=0\}
=
\ker T^{*},
\]
\[
(I-P)T^{*}P_{\ker G}=0.
\]
The second statement now follows from the strong convergence of \(C_{L}\) and the
boundedness of \((I-P)T^{*}\). The last assertion is proved in exactly the same
way.
\end{proof}

The next estimate applies the mixed condition
\prettyref{eq:ergodic-mixed-def} along the unpruned orbit. It gives an
averaged leakage bound which does not assume that $N$ is invariant under
$G_{P}$.

\begin{cor}
\label{cor:ergodic-mixed}
Let \(N\subseteq K\) be a closed \(G\)-invariant subspace with \(M\subseteq N\).
Assume that \prettyref{eq:ergodic-mixed-def} holds on \(N\). Then for every
\(v\in M\) and every \(L>0\),
\begin{equation}
\label{eq:ergodic-mixed-2}
\|(I-P)T^{*}C_{L}v\|_{H}^{2}
\le
\frac{\alpha}{2L}\|v\|_{K}^{2}
+
\frac{\beta}{L}\int_{0}^{L}\|S(t)v\|_{K}^{2}\,dt.
\end{equation}
If, in addition, there exists \(\mu_{N}>0\) such that
\[
G|_{N}\ge \mu_{N}I_{N},
\]
then
\begin{equation}
\label{eq:ergodic-mixed-3}
\|(I-P)T^{*}C_{L}|_{M}\|^{2}
\le
\frac{\alpha}{2L}
+
\frac{\beta}{2\mu_{N}L}.
\end{equation}
In particular,
\[
\|(I-P)T^{*}C_{L}|_{M}\|=O(L^{-1/2})
\qquad (L\to\infty).
\]
\end{cor}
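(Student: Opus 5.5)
The plan is to write the Cesàro average as a time integral, pass the bounded operator $(I-P)T^{*}$ inside it, and then use Jensen's inequality (Cauchy--Schwarz in $t$) to put the square inside the integral. For $v\in M$ we have
\[
(I-P)T^{*}C_{L}v=\frac1L\int_0^L (I-P)T^{*}S(t)v\,dt,
\]
and hence
\[
\|(I-P)T^{*}C_{L}v\|_{H}^{2}\le \frac1L\int_0^L\|(I-P)T^{*}S(t)v\|_{H}^{2}\,dt .
\]
Since $N$ is closed, $G$-invariant, and contains $M$, it contains $\mathcal C_G(M)$ by the minimality in \prettyref{lem:cyclic-orbit}. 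Therefore $S(t)v\in N$ for all $t\ge0$. This lets us apply the mixed condition \prettyref{eq:ergodic-mixed-def} pointwise with $u=S(t)v$:
\[
\|(I-P)T^{*}S(t)v\|_{H}^{2}\le \alpha\|G^{1/2}S(t)v\|_{K}^{2}+\beta\|S(t)v\|_{K}^{2}.
\]
We use $\|T^{*}u\|_{H}^{2}=\|G^{1/2}u\|_{K}^{2}$ exactly as in the proof of \prettyref{thm:2-2}.

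Next comes the energy identity from the proof of \prettyref{thm:2-2}:
\[
\int_0^L\|G^{1/2}S(t)v\|^{2}\,dt=\tfrac12\bigl(\|v\|^{2}-\|S(L)v\|^{2}\bigr)\le\tfrac12\|v\|^{2}.
\]
Integrating the pointwise bound and dividing by $L$ then gives \prettyref{eq:ergodic-mixed-2} directly, with the $\beta$-term left in integral form.

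For \prettyref{eq:ergodic-mixed-3} we use the lower edge $G|_{N}\ge\mu_{N}I_{N}$ to bound the $\beta$-integral. Since $S(t)v\in N$, we get $\|S(t)v\|^{2}\le\mu_{N}^{-1}\|G^{1/2}S(t)v\|^{2}$. Applying the same energy identity once more yields
\[
\int_0^L\|S(t)v\|^{2}\,dt\le \frac{\|v\|^{2}}{2\mu_{N}}.
\]
Alternatively, one can compute directly $\int_0^L e^{-2t\mu}\,dt\le 1/(2\mu)$ via the spectral theorem on $N$. Substituting and taking the supremum over unit $v\in M$ gives \prettyref{eq:ergodic-mixed-3}, and the $O(L^{-1/2})$ statement follows by taking square roots.

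There is no real obstacle here. The only points needing care are justifying the interchange of $(I-P)T^{*}$ with the Bochner/Riemann integral, which holds because the operator is bounded and $t\mapsto S(t)v$ is norm continuous, and checking that the orbit stays in $N$ so that the hypothesis may be invoked.
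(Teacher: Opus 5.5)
Your proposal is correct and follows the paper's proof essentially step for step: Jensen's inequality, the mixed condition applied along the orbit $S(t)v\in N$, and the energy identity from the proof of \prettyref{thm:2-2}. The only small variation is in the lower-edge bound, where you reuse the energy identity via $\|S(t)v\|^{2}\le\mu_{N}^{-1}\|G^{1/2}S(t)v\|^{2}$, while the paper uses $\|S(t)|_{N}\|\le e^{-\mu_{N}t}$ and integrates; both give the same constant $1/(2\mu_{N})$.
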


\begin{proof}
Since \(C_{L}v=\frac{1}{L}\int_{0}^{L}S(t)v\,dt\), Jensen's inequality gives
\[
\|(I-P)T^{*}C_{L}v\|_{H}^{2}
\le
\frac{1}{L}\int_{0}^{L}\|(I-P)T^{*}S(t)v\|_{H}^{2}\,dt.
\]
Applying \prettyref{eq:ergodic-mixed-def} with \(u=S(t)v\in N\), we get
\[
\|(I-P)T^{*}C_{L}v\|_{H}^{2}
\le
\frac{\alpha}{L}\int_{0}^{L}\|T^{*}S(t)v\|_{H}^{2}\,dt
+
\frac{\beta}{L}\int_{0}^{L}\|S(t)v\|_{K}^{2}\,dt.
\]
Now
\[
\|T^{*}S(t)v\|_{H}^{2}
=
\langle S(t)v,GS(t)v\rangle_{K}
=
\|G^{1/2}S(t)v\|_{K}^{2},
\]
and, as in the proof of \prettyref{thm:2-2},
\[
\int_{0}^{L}\|G^{1/2}S(t)v\|_{K}^{2}\,dt
=
\frac{1}{2}\bigl(\|v\|_{K}^{2}-\|S(L)v\|_{K}^{2}\bigr)
\le
\frac{1}{2}\|v\|_{K}^{2}.
\]
This proves \prettyref{eq:ergodic-mixed-2}.
If \(G|_{N}\ge \mu_{N}I_{N}\), then
\[
\|S(t)|_{N}\|\le e^{-\mu_{N}t},
\]
so for \(v\in M\subseteq N\),
\[
\frac{1}{L}\int_{0}^{L}\|S(t)v\|_{K}^{2}\,dt
\le
\frac{1}{L}\int_{0}^{L}e^{-2\mu_{N}t}\,dt\,\|v\|_{K}^{2}
\le
\frac{1}{2\mu_{N}L}\|v\|_{K}^{2}.
\]
Substituting this into \prettyref{eq:ergodic-mixed-2} gives
\prettyref{eq:ergodic-mixed-3}.
\end{proof}

We now turn to the averaged difference of the two semigroups. From this
point on we return to the finite-dimensional task-space setting of
\prettyref{sec:4}. The pruned semigroup itself now appears, so
assumption \prettyref{eq:3-2} is needed to make \(G_{P}|_{N}\) a
well-defined positive operator on the common task space \(N\). Thus
\(N=\mathcal C_G(M)\), \(G_{P}N\subseteq N\), and
\(G_{P}|_{N}=D+R\) as in
\prettyref{eq:3-3}--\prettyref{eq:3-4}. The next statement identifies
the exact Ces\`aro limit of the averaged semigroup difference on the
task space.

\begin{thm}
\label{thm:ergodic-raw}
Assume that \(K\) is finite-dimensional and that \(G_{P}N\subseteq N\), with
\(N=\mathcal C_G(M)\) as in \prettyref{sec:4}. Define
\begin{equation}
\label{eq:ergodic-raw-1}
\Delta_{L}:=\frac{1}{L}\int_{0}^{L}(S_{P}(t)-S(t))|_{M}\,dt,
\qquad L>0.
\end{equation}
Then
\begin{equation}
\label{eq:ergodic-raw-2}
\Delta_{L}
\longrightarrow
\bigl(P_{\ker(G_{P}|_{N})}-P_{\ker(G|_{N})}\bigr)|_{M}
\qquad\text{in operator norm as }L\to\infty.
\end{equation}
Moreover,
\begin{equation}
\label{eq:ergodic-raw-3}
\ker(G|_{N})\subseteq\ker(G_{P}|_{N}).
\end{equation}
Hence, if
\begin{equation}
\label{eq:ergodic-raw-4}
E:=\ker(G_{P}|_{N})\ominus\ker(G|_{N}),
\end{equation}
then
\begin{equation}
\label{eq:ergodic-raw-5}
\Delta_{L}\to P_{E}|_{M}
\qquad\text{in operator norm as }L\to\infty.
\end{equation}
\end{thm}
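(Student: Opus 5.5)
The argument works entirely on the finite-dimensional space $N$, where both generators act as positive self-adjoint operators. By \prettyref{lem:cyclic-orbit}, $N$ reduces $G$, and by the hypothesis $G_PN\subseteq N$ it is invariant under $G_P$. Since $G_P$ is self-adjoint, $N$ also reduces $G_P$. Hence for $v\in M\subseteq N$ we have $S(t)v=e^{-tG|_N}v$ and $S_P(t)v=e^{-tG_P|_N}v$, with both $G|_N$ and $G_P|_N$ positive self-adjoint on $N$. It therefore suffices to prove
\[
\frac1L\int_0^L e^{-tA}\,dt\to P_{\ker A}
\]
in operator norm for a positive self-adjoint operator $A$ on a finite-dimensional space, and then apply this to $A=G|_N$ and $A=G_P|_N$, subtract, and restrict to $M$.

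For that limit I will reuse the function $\psi_L$ from the proof of \prettyref{thm:ergodic-frozen}, which gives $\frac1L\int_0^Le^{-tA}dt=\psi_L(A)$. Because $N$ is finite-dimensional, the spectrum of $A$ is a finite set $\{0\}\cup\{\lambda_j\}$. Let $\lambda_{\min}>0$ be the smallest positive eigenvalue, if any. Then
\[
\|\psi_L(A)-P_{\ker A}\|=\max_{\lambda_j>0}\psi_L(\lambda_j)\le\frac{1}{L\lambda_{\min}}\to0 .
\]
So the convergence holds in operator norm, even with an $O(1/L)$ rate, in contrast to the merely strong convergence on $K$ in \prettyref{thm:ergodic-frozen}. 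Subtracting the two limits gives \prettyref{eq:ergodic-raw-2}.

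For the kernel inclusion \prettyref{eq:ergodic-raw-3}, I use $0\le G_P\le G$, which holds because
\[
\langle u,(G-G_P)u\rangle=\|(I-P)T^*u\|^2\ge0 .
\]
If $u\in N$ and $Gu=0$, then $0\le\langle u,G_Pu\rangle\le\langle u,Gu\rangle=0$. Since $G_P\ge0$, this gives $\|G_P^{1/2}u\|^2=0$, and hence $G_Pu=0$. Alternatively, $\ker G=\ker T^*\subseteq\ker TPT^*$ directly.

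With $\ker(G|_N)\subseteq\ker(G_P|_N)$ established, both kernels are subspaces of $N$, and the difference of the orthogonal projections onto two nested subspaces is the projection onto their orthogonal difference. That is, $P_{\ker(G_P|_N)}-P_{\ker(G|_N)}=P_E$, which yields \prettyref{eq:ergodic-raw-5}.

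The only point requiring care is the reduction step. I must confirm that $N$ reduces $G_P$, and not merely that it is invariant, so that $G_P|_N$ is self-adjoint and $S_P(t)$ restricted to $N$ equals $e^{-tG_P|_N}$. This follows because $G_P$ is self-adjoint: invariance of $N$ implies invariance of $N^\perp$. Everything else is a routine application of the spectral calculus.
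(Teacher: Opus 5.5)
Your proposal is correct and follows essentially the same route as the paper: you pass to $B=G|_N$ and $B_P=G_P|_N$, identify the Ces\`aro means with $\psi_L(B)$ and $\psi_L(B_P)$, use finite-dimensionality to get operator-norm convergence to the kernel projections, and obtain the kernel inclusion from $\ker G=\ker T^{*}$. Your form-ordering argument via $0\le G_P\le G$ is an equivalent variant of that last step, and your explicit rate $\|\psi_L(A)-P_{\ker A}\|\le 1/(L\lambda_{\min})$ together with the check that invariance under the self-adjoint $G_P$ upgrades to reducing are correct details the paper leaves implicit.
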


\begin{proof}
Set \(B:=G|_{N}\) and \(B_{P}:=G_{P}|_{N}\). Since \(N\) is
finite-dimensional and invariant under the self-adjoint operators \(G\)
and \(G_{P}\), both \(B\) and \(B_{P}\) are positive self-adjoint
operators on \(N\). Therefore,
\[
\frac{1}{L}\int_{0}^{L}e^{-tB_{P}}\,dt=\psi_{L}(B_{P}),
\qquad
\frac{1}{L}\int_{0}^{L}e^{-tB}\,dt=\psi_{L}(B),
\]
with the same scalar function \(\psi_{L}\) as in the proof of
\prettyref{thm:ergodic-frozen}. Since \(N\) is finite-dimensional, the
convergence \(\psi_{L}(X)\to P_{\ker X}\) holds in operator norm for
every positive self-adjoint operator \(X\) on \(N\). Hence
\[
\frac{1}{L}\int_{0}^{L}e^{-tB_{P}}\,dt\to P_{\ker B_{P}},
\qquad
\frac{1}{L}\int_{0}^{L}e^{-tB}\,dt\to P_{\ker B}
\]
in operator norm on \(N\).
Since \(M\subseteq N\), restricting the difference to \(M\) yields
\prettyref{eq:ergodic-raw-2}.
To prove \prettyref{eq:ergodic-raw-3}, let \(u\in\ker(G|_{N})\). Then
\[
0=\langle u,Gu\rangle_{K}=\|T^{*}u\|_{H}^{2},
\]
so \(T^{*}u=0\). Therefore
\[
G_{P}u=TPT^{*}u=0,
\]
and hence \(u\in\ker(G_{P}|_{N})\). This proves
\prettyref{eq:ergodic-raw-3}. The last statement follows immediately from the
orthogonal decomposition
\[
\ker(G_{P}|_{N})=\ker(G|_{N})\oplus E.
\]
\end{proof}

\begin{cor}
\label{cor:ergodic-zero}
Under the hypotheses of \prettyref{thm:ergodic-raw}, the following are equivalent:
\begin{enumerate}[(i)]
\item
\(\Delta_{L}\to0\) in operator norm on \(M\) as \(L\to\infty\).
\item
\[
P_{\ker(G_{P}|_{N})}|_{M}=P_{\ker(G|_{N})}|_{M}.
\]
\item
\[
P_{E}|_{M}=0,
\]
where \(E\) is given by \prettyref{eq:ergodic-raw-4}.
\end{enumerate}
\end{cor}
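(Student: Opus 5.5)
The corollary follows directly from \prettyref{thm:ergodic-raw}. I will prove (i)$\Leftrightarrow$(ii) and (ii)$\Leftrightarrow$(iii) separately.

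For (i)$\Leftrightarrow$(ii), the theorem gives
\[
\Delta_{L}\to\bigl(P_{\ker(G_{P}|_{N})}-P_{\ker(G|_{N})}\bigr)|_{M}
\]
in operator norm. Operator-norm limits are unique, so $\Delta_L\to0$ holds exactly when this limit operator on $M$ is zero. That is the same as requiring $P_{\ker(G_{P}|_{N})}v=P_{\ker(G|_{N})}v$ for every $v\in M$, which is statement (ii).

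For (ii)$\Leftrightarrow$(iii), I use the inclusion \prettyref{eq:ergodic-raw-3}, $\ker(G|_{N})\subseteq\ker(G_{P}|_{N})$. With $E$ as in \prettyref{eq:ergodic-raw-4}, this gives the orthogonal decomposition $\ker(G_{P}|_{N})=\ker(G|_{N})\oplus E$. Projections onto orthogonal closed subspaces add, so
\[
P_{\ker(G_{P}|_{N})}=P_{\ker(G|_{N})}+P_{E}
\]
as operators on $N$. Restricting to $M\subseteq N$ shows that the difference in (ii) equals $P_E|_M$, so (ii) and (iii) are the same condition. Equivalently, (iii) follows from \prettyref{eq:ergodic-raw-5} together with uniqueness of limits.

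No step is a real obstacle. The one point to watch is that the projections are taken inside $N$ rather than in $K$. Since $M\subseteq N$, restricting to $M$ is legitimate, and all the identities above hold as maps from $M$ into $N$.
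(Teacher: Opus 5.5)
Your proposal is correct and matches the paper's approach. The paper gives no separate proof: (i)$\Leftrightarrow$(ii) is uniqueness of the operator-norm limit in \eqref{eq:ergodic-raw-2}, and (ii)$\Leftrightarrow$(iii) is the orthogonal splitting $\ker(G_{P}|_{N})=\ker(G|_{N})\oplus E$ used at the end of the proof of Theorem~\ref{thm:ergodic-raw}, which is exactly what you do.
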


Thus, the Ces\`aro average of the semigroup difference tends to zero
exactly when pruning creates no new stationary directions seen by the
task space \(M\): the averaged difference is governed by the stationary
subspaces of \(G|_{N}\) and \(G_{P}|_{N}\). We next connect this with
\prettyref{thm:3-1}. The relevant quantity is the smallest strictly
positive eigenvalue of $G$ on the cyclic task space \(N\).

\begin{cor}
\label{cor:ergodic-thm31}
Assume the hypotheses of \prettyref{thm:3-1}, and suppose that
\(G|_{N}\neq0\). Set
\begin{equation}
\label{eq:ergodic-thm31-1}
\mu_{N}^{+}:=\min\bigl(\sigma(G|_{N})\setminus\{0\}\bigr)>0.
\end{equation}
Let
\[
N_{0}:=\ker(G|_{N}),
\qquad
N_{+}:=N\ominus N_{0}.
\]
Then \(N_{0}\subseteq\ker(G_{P}|_{N})\), \(N_{0}\) is invariant under \(G_{P}|_{N}\),
and therefore, \(N_{+}\) is also invariant under \(G_{P}|_{N}\).
Moreover,
\begin{enumerate}[(1)]
\item
If \(R=0\) and \(\delta<1\), then
\begin{equation}
\label{eq:ergodic-thm31-2}
\ker(G_{P}|_{N})=\ker(G|_{N}),
\end{equation}
and for every \(L>0\),
\begin{equation}
\label{eq:ergodic-thm31-3}
\|\Delta_{L}\|
\le
\frac{1}{L\mu_{N}^{+}}
\left(1+\frac{1}{1-\delta}\right).
\end{equation}
\item
More generally, with \(\rho=\|R\|\) as in \prettyref{eq:3-6}, if
\begin{equation}
\label{eq:ergodic-thm31-4}
\rho<(1-\delta)\mu_{N}^{+},
\end{equation}
then
\begin{equation}
\label{eq:ergodic-thm31-5}
\ker(G_{P}|_{N})=\ker(G|_{N}),
\end{equation}
and for every \(L>0\),
\begin{equation}
\label{eq:ergodic-thm31-6}
\|\Delta_{L}\|
\le
\frac{1}{L\mu_{N}^{+}}
+
\frac{1}{L\bigl((1-\delta)\mu_{N}^{+}-\rho\bigr)}.
\end{equation}
\end{enumerate}
In either case,
\[
\Delta_{L}\to0
\qquad\text{in operator norm on }M.
\]
If \(G|_{N}=0\) instead, then \(S(t)|_{N}=S_{P}(t)|_{N}=I_{N}\), for all \(t\ge0\),
so \(\Delta_{L}=0\) for every \(L>0\).
\end{cor}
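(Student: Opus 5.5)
The plan is to split $N$ orthogonally into the stationary part $N_{0}=\ker(G|_{N})$ and its complement $N_{+}$. Both semigroups act trivially on $N_{0}$, so the averaged difference lives only on $N_{+}$. On $N_{+}$ both generators are bounded below by explicit positive constants, which gives $O(1/L)$ Ces\`aro decay for each average separately. Throughout, write $B:=G|_{N}$ and $B_{P}:=G_{P}|_{N}=D+R$. These are positive self-adjoint operators on the finite-dimensional space $N$; this uses \prettyref{eq:3-2} and the fact that $N$ reduces $G$ by \prettyref{lem:cyclic-orbit}.

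\emph{Step 1 (structure).} The inclusion $N_{0}\subseteq\ker B_{P}$ is \prettyref{eq:ergodic-raw-3}. Since $B_{P}$ annihilates $N_{0}$, the space $N_{0}$ is trivially invariant under $B_{P}$. Because $B_{P}$ is self-adjoint, its orthogonal complement $N_{+}$ is invariant as well. I also note the following:
\begin{itemize}
\item $N_{+}$ is the span of the eigenspaces $E_{\alpha}N$ with $\lambda_{\alpha}>0$.
\item Since $D$ commutes with $B$ by \prettyref{eq:3-12}, $D$ leaves $N_{0}$ and $N_{+}$ invariant.
\item Hence $R=B_{P}-D$ also leaves $N_{+}$ invariant.
\item $R$ is self-adjoint, being a difference of the self-adjoint operators $B_{P}$ and $D$ (each block $E_{\alpha}G_{P}E_{\alpha}$ is self-adjoint).
\end{itemize}

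\emph{Step 2 (lower bounds on $N_{+}$).} By definition of $\mu_{N}^{+}$, we have $B|_{N_{+}}\ge\mu_{N}^{+}I$. Rewriting \prettyref{eq:3-5} as $D-(1-\delta)B=\delta B-(B-D)\ge0$ gives $D\ge(1-\delta)B$, and hence $D|_{N_{+}}\ge(1-\delta)\mu_{N}^{+}I$. Since $R|_{N_{+}}$ is self-adjoint with norm at most $\rho$, it follows that
\[
B_{P}|_{N_{+}}\ge\bigl((1-\delta)\mu_{N}^{+}-\rho\bigr)I=:c\,I .
\]
Under \prettyref{eq:ergodic-thm31-4}, or under $R=0$ with $\delta<1$ (then $c=(1-\delta)\mu_{N}^{+}$), we get $c>0$. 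Consequently $\ker B_{P}\cap N_{+}=\{0\}$, and combined with Step 1 this gives $\ker B_{P}=N_{0}=\ker B$, which is \prettyref{eq:ergodic-thm31-2} and \prettyref{eq:ergodic-thm31-5}.

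\emph{Step 3 (rates).} Use the function $\psi_{L}$ from the proof of \prettyref{thm:ergodic-frozen}. It satisfies $0\le\psi_{L}(\lambda)\le1/(L\lambda)$ for $\lambda>0$, and $\psi_{L}(0)=1$. Both semigroups equal the identity on $N_{0}$, so the averaged difference vanishes on $N_{0}$. Both averages preserve $N_{+}$, so for $v\in M$ with decomposition $v=v_{0}+v_{+}$,
\[
\|\Delta_{L}v\|\le\bigl(\|\psi_{L}(B_{P}|_{N_{+}})\|+\|\psi_{L}(B|_{N_{+}})\|\bigr)\|v_{+}\|\le\Bigl(\frac{1}{Lc}+\frac{1}{L\mu_{N}^{+}}\Bigr)\|v\| .
\]
This is \prettyref{eq:ergodic-thm31-6}. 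Setting $\rho=0$ gives \prettyref{eq:ergodic-thm31-3}, and in either case $\Delta_{L}\to0$ in operator norm on $M$. If $G|_{N}=0$, then $D=G|_{N}-(G|_{N}-D)$ with $0\le G|_{N}-D\le0$, so $D=0$. By \prettyref{eq:ergodic-raw-3} also $B_{P}=0$, since $\ker B=N$. Hence both semigroups are the identity on $N$ and $\Delta_{L}=0$.

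The main point needing care is Step 2: extracting the operator inequality $D\ge(1-\delta)B$ from \prettyref{eq:3-5}, and checking that the decomposition $N=N_{0}\oplus N_{+}$ is simultaneously reducing for $B$, $D$, $R$ and $B_{P}$. This reduction is what allows the self-adjoint perturbation bound $B_{P}\ge D-\rho$ to be applied on $N_{+}$ alone. Everything after that is scalar spectral calculus.
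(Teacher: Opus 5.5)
Your proposal is correct and follows essentially the same route as the paper: you split $N=N_{0}\oplus N_{+}$ using \eqref{eq:ergodic-raw-3}, derive $D\ge(1-\delta)G|_{N}$ from \eqref{eq:3-5}, and use $\|R\|\le\rho$ to get $G_{P}|_{N_{+}}\ge\bigl((1-\delta)\mu_{N}^{+}-\rho\bigr)I$, then bound each Ces\`aro average on $N_{+}$ by $1/(L\cdot\text{lower edge})$. Treating both cases with a single constant $c$, and checking explicitly that $D$ and $R$ each preserve $N_{+}$ (which the paper uses tacitly when writing $G_{P}|_{N_{+}}=D|_{N_{+}}+R|_{N_{+}}$), are only minor streamlinings.
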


\begin{proof}
The inclusion \(N_{0}\subseteq\ker(G_{P}|_{N})\) follows from
\prettyref{eq:ergodic-raw-3}. Hence \(G_{P}|_{N}\) vanishes on \(N_{0}\), so by
self-adjointness \(N_{0}\) reduces \(G_{P}|_{N}\), and therefore \(N_{+}\) is
invariant under \(G_{P}|_{N}\).
Now let \(u\in N_{+}\). Since \(u\perp N_{0}\), we have
\[
\langle u,Gu\rangle_{K}\ge \mu_{N}^{+}\|u\|_{K}^{2}.
\]
Also, by \prettyref{eq:3-5},
\[
\langle u,Du\rangle_{K}
\ge
\langle u,(1-\delta)Gu\rangle_{K}
\ge
(1-\delta)\mu_{N}^{+}\|u\|_{K}^{2}.
\]
If \(R=0\), then \(G_{P}|_{N}=D\), so
\[
G_{P}|_{N_{+}}\ge (1-\delta)\mu_{N}^{+}I_{N_{+}}.
\]
This proves \prettyref{eq:ergodic-thm31-2}. Moreover,
\[
\left\|\frac{1}{L}\int_{0}^{L}e^{-tG|_{N_{+}}}\,dt\right\|
\le
\frac{1}{L\mu_{N}^{+}},
\qquad
\left\|\frac{1}{L}\int_{0}^{L}e^{-tG_{P}|_{N_{+}}}\,dt\right\|
\le
\frac{1}{L(1-\delta)\mu_{N}^{+}},
\]
which yields \prettyref{eq:ergodic-thm31-3} after adding the two bounds.
For the general case, write
\(B_{P}^{+}:=G_{P}|_{N_{+}}=D|_{N_{+}}+R|_{N_{+}}\). Since
\(B_{P}^{+}\) is self-adjoint and \(\|R|_{N_{+}}\|\le\rho\), we get
\[
\langle u,B_{P}^{+}u\rangle_{K}
\ge
\langle u,Du\rangle_{K}-|\langle u,Ru\rangle_{K}|
\ge
\bigl((1-\delta)\mu_{N}^{+}-\rho\bigr)\|u\|_{K}^{2}.
\]
Under \prettyref{eq:ergodic-thm31-4}, it follows that
\[
B_{P}^{+}\ge \bigl((1-\delta)\mu_{N}^{+}-\rho\bigr)I_{N_{+}},
\]
so \(B_{P}^{+}\) has no kernel on \(N_{+}\). Therefore
\prettyref{eq:ergodic-thm31-5} holds. The estimate
\prettyref{eq:ergodic-thm31-6} is proved exactly as above:
\[
\left\|\frac{1}{L}\int_{0}^{L}e^{-tB_{P}^{+}}\,dt\right\|
\le
\frac{1}{L\bigl((1-\delta)\mu_{N}^{+}-\rho\bigr)}.
\]
Combining this with the bound for \(G|_{N_{+}}\) yields
\prettyref{eq:ergodic-thm31-6}.
The final convergence statement follows from either
\prettyref{eq:ergodic-thm31-3} or \prettyref{eq:ergodic-thm31-6}. If
\(G|_{N}=0\), then \(T^{*}u=0\) for all \(u\in N\), hence
\(G_{P}|_{N}=TPT^{*}|_{N}=0\) as well, and the last claim is immediate.
\end{proof}

\begin{rem}
\label{rem:ergodic-complement}
The finite-time estimate \prettyref{eq:3-7} and the Ces\`aro limit above
measure different quantities. Averaging the right-hand side of
\prettyref{eq:3-7} over \([0,L]\) gives a quantity of order \(1+L\rho\),
which carries no asymptotic information; the Ces\`aro limit is instead
controlled by the stationary subspaces of \(G|_{N}\) and \(G_{P}|_{N}\).
We also note that the results of this section are statements about the
frozen model. In finite-width training the empirical kernel may drift,
and the limit \(L\to\infty\) should not be read as a long-time statement
about the nonlinear dynamics. The time-varying problem is taken up in
\prettyref{sec:na-ntk} by different methods; the evolution there is a
product of local semigroup factors rather than a single semigroup, and
the mean ergodic theorem for \(C_{0}\)-semigroups does not apply
directly.
\end{rem}

\section{Ces\`aro estimates without a lower spectral edge}
\label{sec:spectral-distribution-replacement}

The explicit decay estimates in Corollaries \ref{cor:ergodic-mixed} and
\ref{cor:ergodic-thm31} use a positive lower spectral edge on the
nonstationary part of the task space. We show here that the lower-edge
assumption can be replaced by weaker, task-dependent conditions on the
low-frequency spectral distribution. Under these conditions, the decay of
the nonstationary components yields a direct approximation
of the original average by the pruned average whenever their stationary
projections agree on $M$.
Throughout this section, let \(N\subseteq K\) be a closed \(G\)-invariant
subspace with \(M\subseteq N\), and set
\[
B:=G|_{N}.
\]
The canonical minimal choice for the results involving only the unpruned
dynamics is $N=\mathcal C_G(M)$. We retain a general $N$ because
the comparison with the pruned dynamics may require a larger space reducing
both $G$ and $G_P$.
Since \(G\) is self-adjoint, the closed invariant subspace \(N\) reduces \(G\),
and hence \(B\) is a bounded positive self-adjoint operator on \(N\). Let $P_{\ker B}$
denote the orthogonal projection onto the stationary subspace of the
unpruned dynamics on \(N\). For \(v\in N\), let \(\mu_{v}^{B}\) denote the
spectral measure of \(B\) associated with \(v\), so that
\[
\langle f(B)v,v\rangle
=
\int_{[0,\|B\|]} f(\lambda)\,d\mu_{v}^{B}(\lambda),
\]
for every bounded Borel function \(f\).
Since
\[
\ker B\subseteq \ker G=\ker T^{*},
\]
we have
\[
(I-P)T^{*}P_{\ker B}=0,
\]
so the zero spectral atom of \(B\) does not contribute to the tangent
leakage. The hypotheses below are therefore imposed on the nonzero
spectral mass \(\mu_{v}^{B}((0,r])\), not on the full mass
\(\mu_{v}^{B}([0,r])\). If \(B=0\), then the nonstationary quantities
below vanish and the estimates are trivial; we accordingly assume
\(\|B\|>0\).

\subsection{Averaged energy}

The quantity appearing in the mixed leakage estimate of
\prettyref{cor:ergodic-mixed}, and again in the estimates below, is the
averaged energy
\[
I_{L}^{B}(v):=
\frac1L\int_{0}^{L}\|e^{-tB}v\|_{K}^{2}\,dt,
\qquad v\in N,
\]
with nonstationary part
\[
I_{L}^{B,+}(v):=
\frac1L\int_{0}^{L}\|e^{-tB}(I-P_{\ker B})v\|_{K}^{2}\,dt.
\]
Since \(N\) is \(G\)-invariant, for \(v\in N\) this is the same as computing the
unpruned energy of \(S(t)v=e^{-tG}v\) inside the reduced space \(N\).

\begin{thm}
\label{thm:spectral-distribution-energy}
For every \(v\in N\) and every \(L>0\),
\begin{equation}
\label{eq:spectral-distribution-energy-1}
I_{L}^{B}(v)
=
\langle \phi_{L}(B)v,v\rangle_{K}
=
\int_{[0,\|B\|]}\phi_{L}(\lambda)\,d\mu_{v}^{B}(\lambda),
\end{equation}
where
\begin{equation}
\label{eq:spectral-distribution-energy-2}
\phi_{L}(\lambda):=
\begin{cases}
\dfrac{1-e^{-2L\lambda}}{2L\lambda}, & \lambda>0,\\[1ex]
1, & \lambda=0.
\end{cases}
\end{equation}
Moreover,
\begin{equation}
\label{eq:spectral-distribution-energy-3}
I_{L}^{B,+}(v)
=
\int_{(0,\|B\|]}\phi_{L}(\lambda)\,d\mu_{v}^{B}(\lambda)
=
I_{L}^{B}(v)-\|P_{\ker B}v\|_{K}^{2}.
\end{equation}
\end{thm}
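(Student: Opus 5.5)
The plan is to rewrite the integrand through the functional calculus of the bounded positive self-adjoint operator $B$ on $N$, and then exchange the time integral with the spectral integral.

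First, for $v\in N$ we have $e^{-tB}v=e^{-tG}v$, since $N$ reduces $G$. Also $e^{-tB}$ is self-adjoint, so
\[
\|e^{-tB}v\|_{K}^{2}=\langle e^{-2tB}v,v\rangle_{K}=\int_{[0,\|B\|]}e^{-2t\lambda}\,d\mu_{v}^{B}(\lambda).
\]

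Second, we integrate over $t\in[0,L]$ and divide by $L$. The integrand $(t,\lambda)\mapsto e^{-2t\lambda}$ is continuous and bounded by $1$ on $[0,L]\times[0,\|B\|]$, and $\mu_{v}^{B}$ is a finite positive measure of total mass $\|v\|^{2}$. Tonelli/Fubini therefore lets us swap the two integrals. The inner time integral is computed explicitly:
\[
\frac1L\int_0^L e^{-2t\lambda}\,dt=\frac{1-e^{-2L\lambda}}{2L\lambda}\quad(\lambda>0),
\]
and it equals $1$ when $\lambda=0$. This inner integral is exactly $\phi_L(\lambda)$, so the second expression in \eqref{eq:spectral-distribution-energy-1} follows. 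Since $\phi_L$ is a bounded Borel function, the defining property of $\mu_v^B$ identifies this integral with $\langle\phi_L(B)v,v\rangle_K$, which gives the first expression.

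Third, for the nonstationary part we apply the same identity to the vector $w:=(I-P_{\ker B})v\in N$. The main point to check is the relation between the spectral measures of $w$ and $v$. We have $P_{\ker B}=\mathbf 1_{\{0\}}(B)$, which commutes with every spectral projection of $B$. Hence
\[
\mu_{w}^{B}(\Omega)=\langle \mathbf 1_{\Omega}(B)\mathbf 1_{(0,\|B\|]}(B)v,v\rangle=\mu_{v}^{B}(\Omega\cap(0,\|B\|]),
\]
so $\mu_w^B$ is the restriction of $\mu_v^B$ to $(0,\|B\|]$. This gives the first equality in \eqref{eq:spectral-distribution-energy-3}. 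For the second equality, we split $[0,\|B\|]=\{0\}\cup(0,\|B\|]$ in \eqref{eq:spectral-distribution-energy-1}. The atom contributes $\phi_L(0)\mu_v^B(\{0\})=\langle \mathbf 1_{\{0\}}(B)v,v\rangle=\|P_{\ker B}v\|^2$, and subtracting this term yields the claim.

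Nothing here is deep. The only step needing care is justifying the interchange of integrals and the identification of $\mu_w^B$ as the restriction of $\mu_v^B$ to $(0,\|B\|]$. Both follow directly from boundedness of the integrands and the multiplicativity of the functional calculus.
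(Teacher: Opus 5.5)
Your proof is correct and takes essentially the same route as the paper: both start from $\|e^{-tB}v\|^2=\langle e^{-2tB}v,v\rangle$, use the scalar identity $\frac1L\int_0^L e^{-2t\lambda}\,dt=\phi_L(\lambda)$, and then remove the zero spectral atom via $P_{\ker B}=\mathbf 1_{\{0\}}(B)$. The differences are minor: you interchange the integrals at the level of the scalar measure $\mu_v^B$ via Fubini rather than through the operator identity $\frac1L\int_0^L e^{-2tB}\,dt=\phi_L(B)$, and you show explicitly that the spectral measure of $(I-P_{\ker B})v$ is $\mu_v^B$ restricted to $(0,\|B\|]$, a step the paper states in one line.
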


\begin{proof}
Since \(B\) is self-adjoint and positive,
\[
\|e^{-tB}v\|_{K}^{2}
=
\langle e^{-2tB}v,v\rangle_{K}, 
\]
so that 
\[
I_{L}^{B}(v)
=
\left\langle
\left(
\frac1L\int_{0}^{L}e^{-2tB}\,dt
\right)v,
v
\right\rangle_{K}.
\]
Then, for each scalar \(\lambda\ge0\),
\[
\frac1L\int_{0}^{L}e^{-2t\lambda}\,dt
=
\begin{cases}
\dfrac{1-e^{-2L\lambda}}{2L\lambda}, & \lambda>0,\\[1ex]
1, & \lambda=0,
\end{cases}
=
\phi_{L}(\lambda).
\]
Therefore
\[
\frac1L\int_{0}^{L}e^{-2tB}\,dt=\phi_{L}(B),
\]
which proves the first equality in \prettyref{eq:spectral-distribution-energy-1}.
The spectral theorem gives the integral representation in
\prettyref{eq:spectral-distribution-energy-1}. Since
\(e^{-tB}P_{\ker B}=P_{\ker B}\) and \((I-P_{\ker B})\) removes exactly the atom at
\(0\), \prettyref{eq:spectral-distribution-energy-3} follows.
\end{proof}

\begin{rem}
The averaged energy also admits a contour-resolvent representation.
The function \(\phi_{L}\) extends to an entire function, so if
\(\Gamma\) is a positively oriented contour in the resolvent set of
\(B\) enclosing \(\sigma(B)\), the holomorphic functional calculus gives
\[
\phi_{L}(B)
=
\frac{1}{2\pi i}\int_{\Gamma}\phi_{L}(z)(zI-B)^{-1}\,dz,
\qquad\text{and hence,}\qquad
I_{L}^{B}(v)
=
\frac{1}{2\pi i}\int_{\Gamma}\phi_{L}(z)\,m_{v}^{B}(z)\,dz,
\]
where \(m_{v}^{B}(z):=\langle (zI-B)^{-1}v,v\rangle_{K}\) is the
Stieltjes transform of \(\mu_{v}^{B}\). Only the spectral-measure form
\prettyref{eq:spectral-distribution-energy-1} is used below.
\end{rem}

For later use, note that for every \(\lambda>0\),
\begin{equation}
\label{eq:spectral-distribution-energy-8}
0\le \phi_{L}(\lambda)\le \min\left\{1,\frac{1}{2L\lambda}\right\}.
\end{equation}

\subsection{Low-frequency spectral mass}

Define the nonzero low-frequency spectral mass of \(v\) by
\begin{equation}
\label{eq:spectral-distribution-nonzero-mass}
\widetilde F_{v}^{B}(r):=\mu_{v}^{B}((0,r]),
\qquad 0<r\le \|B\|.
\end{equation}
This quantity measures how much of \(v\) lies in the nonzero spectral window
\((0,r]\).

\begin{thm}
\label{thm:spectral-distribution-rates}
Assume that \(B\ne0\), and fix \(v\in N\). Suppose that there exist
constants \(C_{v}>0\) and \(\gamma>0\) such that
\begin{equation}
\label{eq:spectral-distribution-rates-1}
\widetilde F_{v}^{B}(r)
\le C_{v}r^{\gamma},
\qquad 0<r\le \|B\|.
\end{equation}
Then there exists a constant \(C_{\gamma,\|B\|}>0\), depending only on
\(\gamma\) and \(\|B\|\), such that for all \(L\ge \|B\|^{-1}\),
\begin{equation}
\label{eq:spectral-distribution-rates-2}
I_{L}^{B,+}(v) = \int_{(0,\|B\|]}\phi_{L}(\lambda)\,d\mu_{v}^{B}(\lambda)
\le
C_{\gamma,\|B\|}C_{v}\times
\begin{cases}
L^{-\gamma}, & 0<\gamma<1,\\[1ex]
L^{-1}\log(1+L\|B\|), & \gamma=1,\\[1ex]
L^{-1}, & \gamma>1.
\end{cases}
\end{equation}
\end{thm}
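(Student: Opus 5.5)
We start from the representation in \prettyref{thm:spectral-distribution-energy}, which writes $I_{L}^{B,+}(v)$ as $\int_{(0,\|B\|]}\phi_{L}\,d\mu_{v}^{B}$, and from the pointwise bound \prettyref{eq:spectral-distribution-energy-8}. Write $b:=\|B\|$ and $F(r):=\widetilde F_{v}^{B}(r)$. Since $L\ge b^{-1}$, the threshold $r_{L}:=1/L$ lies in $(0,b]$. We split the integral at $r_{L}$.

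On $(0,r_{L}]$ we use $\phi_{L}\le1$. By \prettyref{eq:spectral-distribution-rates-1} this piece is at most $F(1/L)\le C_{v}L^{-\gamma}$.

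On $(r_{L},b]$ we use $\phi_{L}(\lambda)\le 1/(2L\lambda)$, so it suffices to bound
\[
J:=\int_{(1/L,b]}\lambda^{-1}\,d\mu_{v}^{B}(\lambda).
\]
We handle $J$ by a Stieltjes integration by parts. Since $\lambda^{-1}=b^{-1}+\int_{\lambda}^{b}s^{-2}\,ds$, Tonelli gives
\[
J=b^{-1}\bigl(F(b)-F(1/L)\bigr)+\int_{1/L}^{b}s^{-2}\bigl(F(s)-F(1/L)\bigr)\,ds
\le b^{-1}F(b)+\int_{1/L}^{b}s^{-2}F(s)\,ds .
\]
This bypasses any regularity issues with the right-continuous distribution function. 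Inserting $F(s)\le C_{v}s^{\gamma}$ yields
\[
J\le C_{v}\Bigl(b^{\gamma-1}+\int_{1/L}^{b}s^{\gamma-2}\,ds\Bigr).
\]
The remaining integral splits into three cases.
\begin{itemize}
\item If $\gamma<1$, it is at most $L^{1-\gamma}/(1-\gamma)$.
\item If $\gamma=1$, it equals $\log(Lb)\le\log(1+Lb)$.
\item If $\gamma>1$, it is at most $b^{\gamma-1}/(\gamma-1)$.
\end{itemize}

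Multiplying $J$ by $1/(2L)$ and adding the first piece $C_{v}L^{-\gamma}$ gives the three regimes of \prettyref{eq:spectral-distribution-rates-2}. To get there, the stray terms must be absorbed using $L\ge b^{-1}$. In the case $\gamma<1$:
\begin{itemize}
\item $b^{\gamma-1}/L=(Lb)^{\gamma-1}L^{-\gamma}\le L^{-\gamma}$, since $Lb\ge1$ and $\gamma-1<0$.
\end{itemize}
In the case $\gamma\ge1$:
\begin{itemize}
\item $L^{-\gamma}=L^{-1}L^{1-\gamma}\le b^{\gamma-1}L^{-1}$, since $L\ge b^{-1}$ implies $L^{1-\gamma}\le b^{\gamma-1}$.
\item For $\gamma=1$, the constant term $b^{0}/(2L)$ is dominated by a multiple of $L^{-1}\log(1+Lb)$, because $\log(1+Lb)\ge\log2$.
\end{itemize}

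The main care point is this final bookkeeping. Each constant must depend only on $\gamma$ and $b$, and the restriction $L\ge b^{-1}$ is exactly what makes every absorption valid. The analytic content itself is just the layer-cake identity above.
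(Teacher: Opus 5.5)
Your proof is correct and follows the paper's argument. You use the same split at $1/L$, the same bounds $\phi_L\le 1$ and $\phi_L\le (2L\lambda)^{-1}$ on the two pieces, and the same estimate $J\le b^{-1}F(b)+\int_{1/L}^{b}s^{-2}F(s)\,ds$, followed by the three-case evaluation. Your Tonelli/layer-cake derivation of that estimate is a rigorous rendering of the paper's Stieltjes integration by parts: expanding your identity gives exactly $[\lambda^{-1}F]_{1/L}^{b}+\int_{1/L}^{b}\lambda^{-2}F\,d\lambda$. Your explicit absorption of the leftover terms using $Lb\ge1$ fills in the bookkeeping the paper leaves to ``combining the estimates.''
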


\begin{proof}
By \prettyref{thm:spectral-distribution-energy},
\[
I_{L}^{B,+}(v)
=
\int_{(0,\|B\|]}\phi_{L}(\lambda)\,d\mu_{v}^{B}(\lambda),
\]
and by splitting the integral at
the edge scale \(L^{-1}\), we have
\[
I_{L}^{B,+}(v)
=
\int_{(0,L^{-1}]}\phi_{L}(\lambda)\,d\mu_{v}^{B}(\lambda)
+
\int_{(L^{-1},\|B\|]}\phi_{L}(\lambda)\,d\mu_{v}^{B}(\lambda)
=:
I_{1}+I_{2}.
\]
For \(I_{1}\), use \(\phi_{L}(\lambda)\le1\):
\[
I_{1}
\le
\mu_{v}^{B}((0,L^{-1}])
=
\widetilde F_{v}^{B}(L^{-1})
\le
C_{v}L^{-\gamma}.
\]
For \(I_{2}\), use \prettyref{eq:spectral-distribution-energy-8}:
\[
I_{2}
\le
\frac{1}{2L}
\int_{(L^{-1},\|B\|]}\lambda^{-1}\,d\mu_{v}^{B}(\lambda)
=
\frac{1}{2L}
\int_{L^{-1}}^{\|B\|}\lambda^{-1}\,d\widetilde F_{v}^{B}(\lambda).
\]
A Stieltjes integration by parts gives, for \(0<a<b\le\|B\|\),
\[
\int_{a}^{b}\lambda^{-1}\,d\widetilde F_{v}^{B}(\lambda)
=
\Bigl[\lambda^{-1}\widetilde F_{v}^{B}(\lambda)\Bigr]_{a}^{b}
+
\int_{a}^{b}\lambda^{-2}\widetilde F_{v}^{B}(\lambda)\,d\lambda.
\]
Taking \(a=L^{-1}\) and \(b=\|B\|\), dropping the nonpositive lower-boundary
term, and using \prettyref{eq:spectral-distribution-rates-1}, we get
\[
\int_{L^{-1}}^{\|B\|}\lambda^{-1}\,d\widetilde F_{v}^{B}(\lambda)
\le
C_{v}\|B\|^{\gamma-1}
+
C_{v}\int_{L^{-1}}^{\|B\|}\lambda^{\gamma-2}\,d\lambda.
\]
Thus,
\begin{equation}
\label{eq:spectral-distribution-rates-proof-1}
I_{2}
\le
\frac{C_{v}}{2L}
\left(
\|B\|^{\gamma-1}
+
\int_{L^{-1}}^{\|B\|}\lambda^{\gamma-2}\,d\lambda
\right).
\end{equation}
If \(0<\gamma<1\), then
\[
\int_{L^{-1}}^{\|B\|}\lambda^{\gamma-2}\,d\lambda
=
\frac{\|B\|^{\gamma-1}-L^{1-\gamma}}{\gamma-1}
\le
\frac{1}{1-\gamma}L^{1-\gamma},
\]
and \(L\ge\|B\|^{-1}\) also implies
\(\|B\|^{\gamma-1}\le L^{1-\gamma}\).  Hence,
\(I_{2}\le C'_{\gamma,\|B\|}C_{v}L^{-\gamma}\).
If \(\gamma=1\), then
\[
\int_{L^{-1}}^{\|B\|}\lambda^{-1}\,d\lambda
=
\log(L\|B\|),
\]
so
\[
I_{2}\le C'_{\|B\|}C_{v}L^{-1}\log(1+L\|B\|).
\]
If \(\gamma>1\), then
\[
\int_{L^{-1}}^{\|B\|}\lambda^{\gamma-2}\,d\lambda
=
\frac{\|B\|^{\gamma-1}-L^{1-\gamma}}{\gamma-1}
\le
\frac{\|B\|^{\gamma-1}}{\gamma-1},
\]
and \(L^{-\gamma}\le \|B\|^{\gamma-1}L^{-1}\).  Hence
\(I_{1}+I_{2}\le C'_{\gamma,\|B\|}C_{v}L^{-1}\).
Combining the estimates for \(I_{1}\) and \(I_{2}\) proves
\prettyref{eq:spectral-distribution-rates-2}.
\end{proof}

\subsection{Averaged leakage without a lower edge}

We now combine \prettyref{thm:spectral-distribution-rates} with the
mixed task-tail condition from \prettyref{sec:ergodic-task}.

\begin{cor}
\label{cor:spectral-distribution-ergodic}
Assume the mixed task-tail condition \prettyref{eq:ergodic-mixed-def} on \(N\) is given by
\[
\|(I-P)T^{*}u\|_{H}^{2}
\le
\alpha\|T^{*}u\|_{H}^{2}+\beta\|u\|_{K}^{2},
\qquad u\in N.
\]
Suppose further that there exist constants \(C_{M}>0\) and \(\gamma>0\) such that
for every \(v\in M\),
\begin{equation}
\label{eq:spectral-distribution-ergodic-1}
\widetilde F_{v}^{B}(r)
=
\mu_{v}^{B}((0,r])
\le C_{M}r^{\gamma}\|v\|_{K}^{2},
\qquad 0<r\le \|B\|.
\end{equation}
Then there exists \(C_{\gamma,\|B\|}>0\) such that for all
\(L\ge \|B\|^{-1}\),
\begin{equation}
\label{eq:spectral-distribution-ergodic-2}
\|(I-P)T^{*}C_{L}|_{M}\|^{2}
\le
\frac{\alpha}{2L}
+
\beta C_{M}C_{\gamma,\|B\|}\times
\begin{cases}
L^{-\gamma}, & 0<\gamma<1,\\[1ex]
L^{-1}\log(1+L\|B\|), & \gamma=1,\\[1ex]
L^{-1}, & \gamma>1.
\end{cases}
\end{equation}
\end{cor}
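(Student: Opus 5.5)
The plan is to combine the averaged leakage bound of \prettyref{cor:ergodic-mixed} with the spectral rate estimate of \prettyref{thm:spectral-distribution-rates}. Fix $v\in M$. Since $N$ is a closed $G$-invariant subspace containing $M$, \prettyref{eq:ergodic-mixed-2} applies directly and gives
\[
\|(I-P)T^{*}C_{L}v\|_{H}^{2}
\le
\frac{\alpha}{2L}\|v\|_{K}^{2}
+\beta\, I_{L}^{B}(v),
\]
because $S(t)v=e^{-tB}v$ for $v\in N$, so the averaged term is exactly $I_{L}^{B}(v)$. This settles the $\alpha$-contribution without further work.

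The next step is to dispose of the stationary part of $I_{L}^{B}(v)$. By \prettyref{eq:spectral-distribution-energy-3}, $I_{L}^{B}(v)=I_{L}^{B,+}(v)+\|P_{\ker B}v\|^{2}$, and the stationary term does not decay. I would remove it before applying Jensen's inequality. Since $(I-P)T^{*}P_{\ker B}=0$, as recorded at the start of \prettyref{sec:spectral-distribution-replacement}, we have
\[
(I-P)T^{*}C_{L}v=(I-P)T^{*}C_{L}v_{+},\qquad v_{+}:=(I-P_{\ker B})v\in N,
\]
where we use that $C_{L}$ commutes with $P_{\ker B}$ on $N$. Now I would rerun the proof of \prettyref{cor:ergodic-mixed} with $v_{+}$ in place of $v$. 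That proof only uses that the orbit $S(t)v$ stays in $N$, and $S(t)v_{+}\in N$ still holds, so it yields
\[
\|(I-P)T^{*}C_{L}v\|^{2}\le\frac{\alpha}{2L}\|v_{+}\|^{2}+\beta I_{L}^{B}(v_{+}).
\]
Here $\|v_{+}\|\le\|v\|$, and $I_{L}^{B}(v_{+})=I_{L}^{B,+}(v)$ because the spectral measure of $v_{+}$ is $\mu_{v}^{B}$ restricted to $(0,\|B\|]$.

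Finally, I would apply \prettyref{thm:spectral-distribution-rates} to $v$ with $C_{v}=C_{M}\|v\|^{2}$, which is legitimate by \prettyref{eq:spectral-distribution-ergodic-1}. The key point is that the constant $C_{\gamma,\|B\|}$ there depends only on $\gamma$ and $\|B\|$ and not on $v$. This gives $I_{L}^{B,+}(v)\le C_{\gamma,\|B\|}C_{M}\|v\|^{2}$ times the appropriate rate for $L\ge\|B\|^{-1}$. Taking the supremum over unit vectors $v\in M$ then produces \prettyref{eq:spectral-distribution-ergodic-2}.

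The main obstacle is the stationary-component issue. Applied naively, \prettyref{cor:ergodic-mixed} leaves the term $\beta\|P_{\ker B}v\|^{2}$, which does not decay, so the reduction to $v_{+}$ is the step that needs care. It rests on two facts: the orthogonal splitting $N=\ker B\oplus(\ker B)^{\perp}$ is invariant under $e^{-tB}$, and the leakage operator annihilates $\ker B$. It is also worth confirming that the case $B=0$ is excluded by hypothesis, since otherwise the range condition $L\ge\|B\|^{-1}$ would be meaningless.
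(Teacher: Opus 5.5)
Your proposal is correct and follows essentially the same route as the paper. You remove the stationary part via $(I-P)T^{*}P_{\ker B}=0$ and $C_{L}P_{\ker B}=P_{\ker B}$, apply Jensen's inequality and the mixed condition along the orbit of $(I-P_{\ker B})v$, bound the $\alpha$-term by the energy identity, and identify the $\beta$-term as $I_{L}^{B,+}(v)$, which \prettyref{thm:spectral-distribution-rates} controls with $C_{v}=C_{M}\|v\|_{K}^{2}$; the only difference is that you package the Jensen/energy steps as a rerun of the proof of \prettyref{cor:ergodic-mixed} for $v_{+}\in N$, whereas the paper writes them out directly.
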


\begin{proof}
Since \(\ker B\subseteq\ker T^{*}\), we have
\[
(I-P)T^{*}P_{\ker B}=0.
\]
Also \(C_{L}P_{\ker B}=P_{\ker B}\), and therefore,
\[
(I-P)T^{*}C_{L}v
=
(I-P)T^{*}C_{L}(I-P_{\ker B})v.
\]
By Jensen's inequality,
\[
\|(I-P)T^{*}C_{L}v\|_{H}^{2}
\le
\frac1L\int_{0}^{L}
\|(I-P)T^{*}S(t)(I-P_{\ker B})v\|_{H}^{2}\,dt.
\]
Applying the mixed task-tail condition to
\(u=S(t)(I-P_{\ker B})v\in N\), we obtain
\begin{align*}
\|(I-P)T^{*}C_{L}v\|_{H}^{2}
&\le
\frac{\alpha}{L}\int_{0}^{L}
\|T^{*}S(t)(I-P_{\ker B})v\|_{H}^{2}\,dt \\
&\quad+
\frac{\beta}{L}\int_{0}^{L}
\|S(t)(I-P_{\ker B})v\|_{K}^{2}\,dt.
\end{align*}
For the first integral, since \(\|T^{*}u\|_{H}^{2}=\langle
u,Bu\rangle_{K}=\|B^{1/2}u\|_{K}^{2}\) for \(u\in N\), the energy
identity gives
\[
\begin{aligned}
\int_{0}^{L}\|T^{*}S(t)(I-P_{\ker B})v\|_{H}^{2}\,dt
&=
\int_{0}^{L}\|B^{1/2}e^{-tB}(I-P_{\ker B})v\|_{K}^{2}\,dt \\
&=
\frac12\|(I-P_{\ker B})v\|_{K}^{2}
-\frac12\|e^{-LB}(I-P_{\ker B})v\|_{K}^{2}
\le
\frac12\|v\|_{K}^{2}.
\end{aligned}
\]
The second integral is \(L I_{L}^{B,+}(v)\).  By
\prettyref{thm:spectral-distribution-rates} and
\prettyref{eq:spectral-distribution-ergodic-1},
\[
I_{L}^{B,+}(v)
\le
C_{M}C_{\gamma,\|B\|}
\begin{cases}
L^{-\gamma}, & 0<\gamma<1,\\[1ex]
L^{-1}\log(1+L\|B\|), & \gamma=1,\\[1ex]
L^{-1}, & \gamma>1
\end{cases}
\|v\|_{K}^{2}.
\]
Taking the supremum over unit vectors \(v\in M\) gives
\prettyref{eq:spectral-distribution-ergodic-2}.
\end{proof}

Thus the corollary controls the component of the Ces\`aro-averaged
unpruned tangent vector \(T^{*}C_{L}v\) lying in the discarded parameter
directions. Under taskwise control of the nonzero spectral mass near the
origin, this tangent vector becomes asymptotically concentrated in the
retained parameter space \(\operatorname{ran}P\).

\subsection{A Ces\`aro approximation theorem}

We conclude the section with a quantitative comparison of the Ces\`aro
averaged semigroups. The general estimate controls the difference of
the pruned and original averages after subtracting the exact stationary
component of their difference; when the stationary projections agree on $M$, it gives a 
direct operator-norm approximation with an explicit rate.

\begin{thm}
\label{thm:spectral-distribution-cesaro-approx}
Let \(N\subseteq K\) be a closed subspace with \(M\subseteq N\).  Assume that
\(N\) reduces both \(G\) and \(G_{P}\), and set
\[
B:=G|_{N},
\qquad
B_{P}:=G_{P}|_{N}.
\]
Let
\[
C_{L}:=\frac1L\int_{0}^{L}e^{-tG}\,dt,
\qquad
C_{P,L}:=\frac1L\int_{0}^{L}e^{-tG_{P}}\,dt.
\]
For \(v\in M\), let \(\mu_{v}^{B}\) and \(\mu_{v}^{B_{P}}\) denote the spectral
measures of \(B\) and \(B_{P}\), respectively.  Suppose that there are constants
\(C_{M},C_{M,P}>0\) and exponents \(\gamma,\gamma_{P}>0\) such that, for every
\(v\in M\),
\begin{equation}
\label{eq:spectral-distribution-cesaro-approx-1}
\mu_{v}^{B}((0,r])
\le
C_{M}r^{\gamma}\|v\|_{K}^{2},
\qquad
0<r\le \|B\|,
\end{equation}
and
\begin{equation}
\label{eq:spectral-distribution-cesaro-approx-2}
\mu_{v}^{B_{P}}((0,r])
\le
C_{M,P}r^{\gamma_{P}}\|v\|_{K}^{2},
\qquad
0<r\le \|B_{P}\|.
\end{equation}
For \(a>0\), define
\[
\mathcal R_{\omega,a}(L)
:=
\begin{cases}
L^{-\omega}, & 0<\omega<1,\\[1ex]
L^{-1}\log(1+La), & \omega=1,\\[1ex]
L^{-1}, & \omega>1.
\end{cases}
\]
Let
\[
L_{0}:=\max\{a^{-1}:a\in\{\|B\|,\|B_{P}\|\},\ a>0\},
\]
with the convention that \(L_{0}=0\) if both \(B\) and \(B_{P}\) are zero.  Then,
for all \(L\ge L_{0}\),
\begin{align}
\label{eq:spectral-distribution-cesaro-approx-3}
&
\left\|
(C_{P,L}-C_{L})|_{M}
-
\bigl(P_{\ker B_{P}}-P_{\ker B}\bigr)|_{M}
\right\|
\nonumber\\
&\qquad\le
\left(C_{M,P}C_{\gamma_{P},\|B_{P}\|}
\mathcal R_{\gamma_{P},\|B_{P}\|}(L)\right)^{1/2}
+
\left(C_{M}C_{\gamma,\|B\|}
\mathcal R_{\gamma,\|B\|}(L)\right)^{1/2},
\end{align}
where the corresponding term is understood to be zero if \(B=0\) or
\(B_{P}=0\).
In particular, if
\begin{equation}
\label{eq:spectral-distribution-cesaro-approx-4}
P_{\ker B_{P}}|_{M}=P_{\ker B}|_{M},
\end{equation}
then
\begin{align}
\label{eq:spectral-distribution-cesaro-approx-5}
\|(C_{P,L}-C_{L})|_{M}\|
&\le
\left(C_{M,P}C_{\gamma_{P},\|B_{P}\|}
\mathcal R_{\gamma_{P},\|B_{P}\|}(L)\right)^{1/2}
\nonumber\\
&\quad+
\left(C_{M}C_{\gamma,\|B\|}
\mathcal R_{\gamma,\|B\|}(L)\right)^{1/2},
\end{align}
again omitting any zero-operator term.  Consequently,
\[
(C_{P,L}-C_{L})|_{M}\to0,
\qquad
\text{in operator norm as }L\to\infty.
\]
\end{thm}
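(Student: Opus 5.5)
Since $N$ reduces both $G$ and $G_{P}$, for $v\in M\subseteq N$ we have $e^{-tG}v=e^{-tB}v$ and $e^{-tG_{P}}v=e^{-tB_{P}}v$. Hence $C_{L}v=\psi_{L}(B)v$ and $C_{P,L}v=\psi_{L}(B_{P})v$, with $\psi_{L}$ the scalar function from the proof of \prettyref{thm:ergodic-frozen}. Moreover $\psi_{L}(X)P_{\ker X}=P_{\ker X}$ for $X\in\{B,B_{P}\}$. The quantity to be estimated therefore splits as
\[
\bigl(\psi_{L}(B_{P})-P_{\ker B_{P}}\bigr)v-\bigl(\psi_{L}(B)-P_{\ker B}\bigr)v
=\psi_{L}(B_{P})(I-P_{\ker B_{P}})v-\psi_{L}(B)(I-P_{\ker B})v.
\]
By the triangle inequality, it suffices to bound each nonstationary term separately by the square root of the corresponding rate. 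This decouples the pruned and unpruned dynamics entirely, so no interaction between $B$ and $B_{P}$ (no Duhamel, no invariance of kernels) is needed.

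For a single positive operator $X$ on $N$ (either $B$ or $B_{P}$), I will bound $\|\psi_{L}(X)(I-P_{\ker X})v\|^{2}$ by the averaged energy $I_{L}^{X,+}(v)$ of \prettyref{thm:spectral-distribution-energy}. There are two equivalent routes. Jensen's inequality gives $\|\frac1L\int_{0}^{L}e^{-tX}w\,dt\|^{2}\le\frac1L\int_{0}^{L}\|e^{-tX}w\|^{2}dt$ with $w=(I-P_{\ker X})v$. Alternatively, the pointwise scalar inequality $\psi_{L}(\lambda)^{2}\le\phi_{L}(\lambda)$ for $\lambda>0$ also follows from Jensen, or from $(1-e^{-x})^{2}\le\frac{x}{2}(1-e^{-2x})$. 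By the spectral theorem, the result is
\[
\|\psi_{L}(X)(I-P_{\ker X})v\|^{2}=\int_{(0,\|X\|]}\psi_{L}^{2}\,d\mu_{v}^{X}\le\int_{(0,\|X\|]}\phi_{L}\,d\mu_{v}^{X}=I_{L}^{X,+}(v).
\]
Note that \prettyref{thm:spectral-distribution-energy} and \prettyref{thm:spectral-distribution-rates} were stated for $B=G|_{N}$, but their proofs use only that the operator is bounded, positive and self-adjoint on $N$. I will remark that they apply verbatim to $B_{P}$.

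Next I apply \prettyref{thm:spectral-distribution-rates} with $C_{v}=C_{M}\|v\|^{2}$ (respectively $C_{M,P}\|v\|^{2}$), using hypotheses \eqref{eq:spectral-distribution-cesaro-approx-1}--\eqref{eq:spectral-distribution-cesaro-approx-2}. The requirement $L\ge\|X\|^{-1}$ is guaranteed by $L\ge L_{0}$ whenever $X\neq0$. This gives $I_{L}^{X,+}(v)\le C_{M}C_{\gamma,\|X\|}\mathcal R_{\gamma,\|X\|}(L)\|v\|^{2}$, and similarly for $B_{P}$. If $X=0$, then $(I-P_{\ker X})v=0$ and the term vanishes, which matches the stated convention. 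Taking square roots, summing, and taking the supremum over unit $v\in M$ yields \eqref{eq:spectral-distribution-cesaro-approx-3}. Under \eqref{eq:spectral-distribution-cesaro-approx-4} the stationary correction is zero, giving \eqref{eq:spectral-distribution-cesaro-approx-5}. Since every $\mathcal R_{\omega,a}(L)\to0$, operator-norm convergence follows.

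The main obstacle is the comparison step $\psi_{L}^{2}\le\phi_{L}$ (equivalently, the Jensen step). This is where the averaged operator is converted into the averaged energy to which the spectral-mass rates apply, and it is the source of the square roots in the bound. A secondary care point is checking that the constants $C_{\gamma,\|B_{P}\|}$ from \prettyref{thm:spectral-distribution-rates} are legitimately used for $B_{P}$. For this I would recheck that the proof never invoked $G$-specific structure such as $\ker B\subseteq\ker T^{*}$.
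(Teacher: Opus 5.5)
Your proposal is correct and follows essentially the same route as the paper. You identify $C_L|_N=\psi_L(B)$ and $C_{P,L}|_N=\psi_L(B_P)$, remove the zero spectral atom, pass to the averaged energy via $\psi_L^2\le\phi_L$ (Jensen), apply \prettyref{thm:spectral-distribution-rates} to $B$ and $B_P$ separately, and combine with the triangle inequality. Your explicit checks, that \prettyref{thm:spectral-distribution-rates} uses only boundedness, positivity and self-adjointness (so it applies verbatim to $B_P$) and that the term vanishes when $X=0$, fill in points the paper leaves implicit.
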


\begin{proof}
Recall that for \(L>0\),
\[
\psi_{L}(\lambda)
:=
\begin{cases}
\dfrac{1-e^{-L\lambda}}{L\lambda}, & \lambda>0,\\[1ex]
1, & \lambda=0.
\end{cases}
\]
Hence, by the functional calculus for bounded positive
self-adjoint operators,
\[
C_{L}|_{N}
=
\frac1L\int_{0}^{L}e^{-tB}\,dt
=
\psi_{L}(B),
\qquad
C_{P,L}|_{N}
=
\frac1L\int_{0}^{L}e^{-tB_{P}}\,dt
=
\psi_{L}(B_{P}).
\]
We first obtain a convergence estimate for a single operator.
Let \(X\) denote either \(B\) or \(B_{P}\), and let \(E^{X}\) be its
spectral resolution. Since \(X\ge0\), its spectrum is contained in
\([0,\|X\|]\), and
\[
P_{\ker X}=E^{X}(\{0\}).
\]
Because \(\psi_{L}(0)=1\), subtracting the stationary projection removes
exactly the zero spectral atom. Thus, for every \(v\in N\),
\[
\bigl(\psi_{L}(X)-P_{\ker X}\bigr)v
=
\int_{(0,\|X\|]}\psi_{L}(\lambda)\,dE^{X}(\lambda)v.
\]
Consequently, if \(\mu_{v}^{X}\) denotes the spectral measure
\[
\mu_{v}^{X}(\Omega):=\|E^{X}(\Omega)v\|^{2},
\]
then the spectral theorem gives
\[
\|\bigl(\psi_{L}(X)-P_{\ker X}\bigr)v\|_{K}^{2}
=
\int_{(0,\|X\|]}\psi_{L}(\lambda)^{2}\,d\mu_{v}^{X}(\lambda).
\]
We now compare \(\psi_{L}\) with the
multiplier used in \prettyref{thm:spectral-distribution-rates}. For
\(\lambda>0\),
\[
\psi_{L}(\lambda)
=
\frac1L\int_{0}^{L}e^{-t\lambda}\,dt.
\]
By Jensen's inequality,
\[
\psi_{L}(\lambda)^{2}
=
\left(\frac1L\int_{0}^{L}e^{-t\lambda}\,dt\right)^{2}
\le
\frac1L\int_{0}^{L}e^{-2t\lambda}\,dt.
\]
Therefore,
\[
0\le \psi_{L}(\lambda)^{2}
\le
\phi_{L}(\lambda),
\qquad \lambda>0,
\]
with \(\phi_{L}\) as in \prettyref{eq:spectral-distribution-energy-2}.
Hence, for every \(v\in M\),
\[
\|\bigl(\psi_{L}(X)-P_{\ker X}\bigr)v\|_{K}^{2}
\le
\int_{(0,\|X\|]}\phi_{L}(\lambda)\,d\mu_{v}^{X}(\lambda).
\]
Applying the spectral-distribution estimate from
\prettyref{thm:spectral-distribution-rates} first with \(X=B\), and
then with \(X=B_{P}\), gives
\[
\|\bigl(\psi_{L}(B)-P_{\ker B}\bigr)|_{M}\|^{2}
\le
C_{M}C_{\gamma,\|B\|}
\mathcal R_{\gamma,\|B\|}(L),
\]
and
\[
\|\bigl(\psi_{L}(B_{P})-P_{\ker B_{P}}\bigr)|_{M}\|^{2}
\le
C_{M,P}C_{\gamma_{P},\|B_{P}\|}
\mathcal R_{\gamma_{P},\|B_{P}\|}(L).
\]
It remains to compare the two Ces\`aro averages. On \(N\), we have
\[
C_{L}=\psi_{L}(B),
\qquad
C_{P,L}=\psi_{L}(B_{P}).
\]
Therefore, restricting to \(M\) and using the triangle inequality gives
\[
\begin{aligned}
&\left\|
\left(
C_{P,L}-C_{L}
-
(P_{\ker B_{P}}-P_{\ker B})
\right)\big|_{M}
\right\|  \\
&\qquad \le
\|\bigl(\psi_{L}(B_{P})-P_{\ker B_{P}}\bigr)|_{M}\|
+
\|\bigl(\psi_{L}(B)-P_{\ker B}\bigr)|_{M}\|  \\
&\qquad \le
\left(
C_{M,P}C_{\gamma_{P},\|B_{P}\|}
\mathcal R_{\gamma_{P},\|B_{P}\|}(L)
\right)^{1/2}
+
\left(
C_{M}C_{\gamma,\|B\|}
\mathcal R_{\gamma,\|B\|}(L)
\right)^{1/2}.
\end{aligned}
\]
This proves \prettyref{eq:spectral-distribution-cesaro-approx-3}.
Finally, suppose that \prettyref{eq:spectral-distribution-cesaro-approx-4}
holds, that is,
\[
(P_{\ker B_{P}}-P_{\ker B})|_{M}=0.
\]
Then the stationary projection term in the previous estimate vanishes on
\(M\), so
\[
\|(C_{P,L}-C_{L})|_{M}\|
=
\left\|
\left(
C_{P,L}-C_{L}
-
(P_{\ker B_{P}}-P_{\ker B})
\right)\big|_{M}
\right\|,
\]
and the preceding bound yields
\prettyref{eq:spectral-distribution-cesaro-approx-5}.
\end{proof}

Condition \prettyref{eq:spectral-distribution-cesaro-approx-4} states
that pruning creates no new stationary directions visible from \(M\).
When it holds, and the taskwise spectral mass near zero satisfies the
power bounds above, the averaged pruned dynamics \(C_{P,L}|_{M}\)
converges in operator norm to the averaged original dynamics
\(C_{L}|_{M}\), with a rate determined by the exponents at the origin.

\section{A finite-time estimate for unbounded generators}
\label{sec:unbounded-kato}

In the preceding sections, we considered the bounded generator
\[
G=TT^{*}\in\mathcal B(K),
\]
and its pruned counterpart
\[
G_{P}=TPT^{*}.
\]
In that setting, every operator is defined on all of the output Hilbert space
\(K\), the difference \(G-G_{P}\) is bounded, and, under the task-capture
hypothesis \prettyref{eq:2-2} on \(N=\mathcal C_G(M)\), Duhamel's formula may
be combined directly with the estimate
\begin{equation}
\label{eq:unbd-bounded-key-estimate}
\|(G-G_{P})u\|
\le
\epsilon\sqrt{\|G-G_{P}\|}\,\|G^{1/2}u\|,
\qquad u\in N.
\end{equation}
The purpose of this section is to show that the Duhamel--energy argument
developed in \prettyref{sec:3} extends to output-side generators that
need not be bounded, with essentially the same argument once a suitable
variation-of-constants formula and an orbitwise relative square-root
estimate for the perturbation are available. We restrict attention to
this finite-time extension.


Unbounded generators arise naturally from derivative-weighted losses.
Let
\[
\mathcal D:D(\mathcal D)\subset K_{0}\to Y
\]
be a densely defined closed operator from a base output space \(K_{0}\)
into a feature space \(Y\).
Let
\[
T:H\to K_{0}
\]
be the frozen Jacobian, and set \(G=TT^{*}\in\mathcal B(K_{0})\). Define
the derivative-feature Jacobian
\begin{equation}
\label{eq:unbd-JL-def}
J_{\mathcal D}:=\mathcal DT,
\qquad
D(J_{\mathcal D}):=\{h\in H:Th\in D(\mathcal D)\},
\end{equation}

The closedness of \(\mathcal D\) and the boundedness of \(T\) imply that
\(J_{\mathcal D}\) is closed; we assume that its domain is dense in \(H\).
If \(F(\theta_{0})-y\in D(\mathcal D)\),
the \(\mathcal D\)-weighted loss is
\[
\mathcal L_{\mathcal D}(h)
=
\frac12\|J_{\mathcal D}h+\mathcal D(F(\theta_{0})-y)\|_{Y}^{2},
\qquad h\in D(J_{\mathcal D}),
\]
with derivative residual
\(r_{\mathcal D}=J_{\mathcal D}h+\mathcal D(F(\theta_{0})-y)\). The
formal parameter gradient flow \(h'(t)=-J_{\mathcal D}^{*}r_{\mathcal D}(t)\)
leads to the residual equation
\begin{equation}
\label{eq:unbd-derivative-residual-flow}
r_{\mathcal D}'(t)=-J_{\mathcal D}J_{\mathcal D}^{*}r_{\mathcal D}(t).
\end{equation}
The task-space generator associated with the \(\mathcal D\)-weighted
residual is therefore
\begin{equation}
\label{eq:unbd-AL-rigorous}
A_{\mathcal D}:=J_{\mathcal D}J_{\mathcal D}^{*},
\end{equation}
a nonnegative self-adjoint operator on the derivative-feature space
\(Y\). Since \(T\) is bounded, one always has
\(T^{*}\mathcal D^{*}\subseteq J_{\mathcal D}^{*}\), and on any common
domain where \(J_{\mathcal D}^{*}r=T^{*}\mathcal D^{*}r\) and
\(TT^{*}\mathcal D^{*}r\in D(\mathcal D)\), the generator acts as
\[
A_{\mathcal D}r=\mathcal DTT^{*}\mathcal D^{*}r=\mathcal DG\mathcal D^{*}r,
\]
the formal expression \(\mathcal DG\mathcal D^{*}\) being understood on
the natural domain
\begin{equation}
\label{eq:unbd-LGL-domain}
D(\mathcal DG\mathcal D^{*})
=
\{r\in D(\mathcal D^{*}):G\mathcal D^{*}r\in D(\mathcal D)\}.
\end{equation}
We take \(A_{\mathcal D}=J_{\mathcal D}J_{\mathcal D}^{*}\) as the
definition; the expression \(\mathcal DG\mathcal D^{*}\) is a concrete
formula for it when the indicated domains and adjoint identities are
valid.

\begin{rem}
For finite-dimensional images, first- and second-order difference
operators are finite matrices, and the corresponding derivative-task
generator is bounded; the unbounded model is the continuum, or
high-resolution, limit. As the pixel spacing \(h\downarrow0\), the norms
of first-difference and second-difference operators typically scale in the order of 
\(h^{-1}\) and \(h^{-2}\), respectively. Hence, the generators of the form
\(\mathcal D_{h}G_{h}\mathcal D_{h}^{*}\) may have norms growing with
resolution unless the kernel \(G_{h}\) provides sufficient
high-frequency smoothing.
\end{rem}

For the rest of the section, \(K\) is a Hilbert space and
\(A\ge0\) is a self-adjoint, possibly unbounded, operator on \(K\).  We write
\[
S(t)=e^{-tA},\qquad t\ge0,
\]
for the associated contraction semigroup.  The perturbed generator is denoted by
\(A_{\#}\ge0\), with semigroup
\[
S_{\#}(t)=e^{-tA_{\#}},\qquad t\ge0;
\]
it may model pruned, filtered, or otherwise perturbed unbounded task
generators. We do not develop a systematic perturbation theory for such
generators here. In particular, the construction of \(A_{\#}\) from a
pruning mechanism, verification of the required domain or form-domain
conditions and of the variation-of-constants formula, and unbounded
analogues of the averaged and nonautonomous results are left for future
work. Such a development would require a separate analysis beyond the
core bounded task-space theory developed in this paper.

The theorem below assumes that \(A_{\#}\) is sel-fadjoint and that the
stated variation-of-constants formula is valid on the task orbit. Only
the vectors reached from \(M\) under the unperturbed semigroup enter the
argument, so the relative bound is imposed directly along those
trajectories.

\begin{thm}
\label{thm:kato-relative-finite-time}
Let \(A\ge0\) and \(A_{\#}\ge0\) be self-adjoint operators on \(K\), with
semigroups \(S(t)\) and \(S_{\#}(t)\). Let \(M\subseteq K\) be closed
and let \(\tau>0\). Let \(W:D(W)\subseteq K\to K\) be a linear operator
such that
\begin{equation}
\label{eq:kato-orbit-domain}
S(s)M\subseteq D(W),
\qquad 0<s\le\tau.
\end{equation}
The sign convention below corresponds to \(W=A-A_{\#}\) whenever this
operator difference and the usual variation-of-constants formula are
well defined. Assume that
\begin{equation}
\label{eq:kato-duhamel}
S_{\#}(t)v-S(t)v
=
\int_{0}^{t}S_{\#}(t-s)W S(s)v\,ds
\end{equation}
holds as a Bochner integral for \(v\in M\) and \(0<t\le\tau\).
Suppose also that there exists \(\eta\ge0\) such that
\begin{equation}
\label{eq:kato-half-bound}
\|W S(s)v\|_{K}
\le
\eta\|A^{1/2}S(s)v\|_{K},
\qquad
v\in M,\quad 0<s\le\tau.
\end{equation}
Then
\begin{equation}
\label{eq:kato-finite-bound}
\sup_{0\le t\le\tau}
\|(S_{\#}(t)-S(t))|_{M}\|
\le
\eta\sqrt{\frac{\tau}{2}}.
\end{equation}
If, in addition, \(M\subseteq D(A^{1/2})\), then the restriction
\(A^{1/2}|_{M}:M\to K\) is bounded. Setting
\begin{equation}
\label{eq:kato-task-energy}
\kappa_{M}:=\|A^{1/2}|_{M}\|,
\end{equation}
one has
\begin{equation}
\label{eq:kato-finite-bound-refined}
\sup_{0\le t\le\tau}
\|(S_{\#}(t)-S(t))|_{M}\|
\le
\eta
\min\left\{
\tau\kappa_{M},
\sqrt{\frac{\tau}{2}}
\right\}.
\end{equation}
\end{thm}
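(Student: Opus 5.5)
The plan is to repeat the Duhamel--energy argument from the proof of \prettyref{thm:2-2}, with the bounded identity \prettyref{eq:2-7} replaced by the orbitwise hypothesis \prettyref{eq:kato-half-bound}. Fix $v\in M$ and $t\in(0,\tau]$; the case $t=0$ is trivial since both semigroups equal $I$. Since $A_{\#}\ge0$ is self-adjoint, $S_{\#}$ is a contraction semigroup. The Bochner formula \prettyref{eq:kato-duhamel} then gives
\[
\|(S_{\#}(t)-S(t))v\|_{K}\le\int_{0}^{t}\|WS(s)v\|_{K}\,ds\le\eta\int_{0}^{t}\|A^{1/2}S(s)v\|_{K}\,ds .
\]
The norm is integrated over $(0,t]$; the single point $s=0$ is negligible. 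Measurability of $s\mapsto\|A^{1/2}S(s)v\|$ on $(0,t]$ follows from the spectral theorem.

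For the first bound I will use Cauchy--Schwarz and the energy identity. The identity
\[
\int_{0}^{t}\|A^{1/2}S(s)v\|^{2}\,ds=\tfrac12\bigl(\|v\|^{2}-\|S(t)v\|^{2}\bigr)\le\tfrac12\|v\|^{2}
\]
requires care when $A$ is unbounded and $v\notin D(A)$. I will prove it through the spectral measure $\mu_v^A$ instead of differentiating. By Tonelli,
\[
\int_0^t\int_{[0,\infty)}\lambda e^{-2s\lambda}\,d\mu_v^A(\lambda)\,ds=\int\frac{1-e^{-2t\lambda}}{2}\,d\mu_v^A(\lambda).
\]
This step is exactly where the unbounded setting differs from \prettyref{thm:2-2}, and I expect it to be the only real obstacle. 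The same computation shows $S(s)v\in D(A^{1/2})$ for $s>0$, since $\lambda e^{-2s\lambda}$ is bounded. Combining, $\int_0^t\|A^{1/2}S(s)v\|\,ds\le\sqrt{t/2}\,\|v\|$. Taking the supremum over unit $v\in M$ and over $t\le\tau$ yields \prettyref{eq:kato-finite-bound}.

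For the refined bound, assume $M\subseteq D(A^{1/2})$. First I show that $A^{1/2}|_M$ is bounded: $A^{1/2}$ is closed, and its restriction to the closed subspace $M$ is an everywhere-defined closed operator $M\to K$, so the closed graph theorem applies and $\kappa_M<\infty$. Since $S(s)$ commutes with $A^{1/2}$ on $D(A^{1/2})$ by the functional calculus, we get
\[
\|A^{1/2}S(s)v\|=\|S(s)A^{1/2}v\|\le\kappa_M\|v\| .
\]
Integrating over $[0,t]$ gives the bound $\eta t\kappa_M\|v\|$. Taking the minimum with the previous estimate and the supremum over $t\in[0,\tau]$ gives \prettyref{eq:kato-finite-bound-refined}.
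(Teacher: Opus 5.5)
Your proposal is correct and follows the paper's proof essentially step for step: the Duhamel formula with contractivity of $S_{\#}$, the orbitwise bound, Cauchy--Schwarz with the energy identity, and, for the refined estimate, the closed graph theorem together with the commutation $A^{1/2}S(s)v=S(s)A^{1/2}v$. The only difference is one of detail: you verify the energy identity and the regularization $S(s)v\in D(A^{1/2})$ explicitly via Tonelli on the spectral measure $\mu_v^A$, whereas the paper simply attributes both to the spectral theorem.
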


\begin{proof}
Since \(A_{\#}\ge0\), the semigroup \(S_{\#}(t)\) is contractive. By
\prettyref{eq:kato-duhamel}, for \(v\in M\),
\[
\|(S_{\#}(t)-S(t))v\|
\le
\int_{0}^{t}\|S_{\#}(t-s)W S(s)v\|\,ds.
\]
For \(s>0\), the vector \(S(s)v\) belongs to \(D(A^{1/2})\). 
Indeed,
by the spectral theorem,
\(A^{1/2}e^{-sA}\) is the bounded spectral multiplier
\(\lambda\mapsto \lambda^{1/2}e^{-s\lambda}\), and
\[
\|A^{1/2}e^{-sA}\|
=
\sup_{\lambda\ge0}\lambda^{1/2}e^{-s\lambda}
\le
(2es)^{-1/2}<\infty.
\]
Thus \prettyref{eq:kato-half-bound} applies for every \(s>0\). The value
at \(s=0\) is irrelevant for the integral. Using again the contractivity
of \(S_{\#}(t-s)\), we obtain
\[
\|(S_{\#}(t)-S(t))v\|
\le
\eta\int_{0}^{t}\|A^{1/2}S(s)v\|\,ds.
\]
By Cauchy--Schwarz,
\[
\int_{0}^{t}\|A^{1/2}S(s)v\|\,ds
\le
\sqrt t
\left(
\int_{0}^{t}\|A^{1/2}S(s)v\|^{2}\,ds
\right)^{1/2},
\]
and again by the spectral theorem,
\[
\int_{0}^{t}\|A^{1/2}S(s)v\|^{2}\,ds
=
\frac12
\left(
\|v\|^{2}-\|S(t)v\|^{2}
\right)
\le
\frac12\|v\|^{2}.
\]
Hence,
\[
\|(S_{\#}(t)-S(t))v\|
\le
\eta\sqrt{\frac{t}{2}}\|v\|.
\]
Taking the supremum over \(0\le t\le\tau\) and over unit vectors \(v\in M\)
gives \prettyref{eq:kato-finite-bound}.

Suppose now that \(M\subseteq D(A^{1/2})\). Since \(M\) is closed and
\(A^{1/2}\) is closed, the closed graph theorem shows that
\(\kappa_M<\infty\). The corresponding functional calculus gives
\[
A^{1/2}S(s)v=S(s)A^{1/2}v,
\qquad v\in M,\quad s\ge0,
\]
and the contractivity of \(S(s)\) gives 
\[
\|A^{1/2}S(s)v\|
\le
\|A^{1/2}v\|
\le
\kappa_M\|v\|.
\]
Consequently,
\[
\int_{0}^{t}\|A^{1/2}S(s)v\|\,ds
\le
t\kappa_M\|v\|.
\]
Combining this estimate with the energy estimate already proved yields
\prettyref{eq:kato-finite-bound-refined}.
\end{proof}

\begin{rem}
Let \(E_A\) denote the spectral measure of \(A\). If
\(M\subseteq E_A([0,\Lambda])K\) for some \(\Lambda<\infty\), then
\(M\subseteq D(A^{1/2})\) and \(\kappa_M\le\sqrt{\Lambda}\). Thus
\prettyref{eq:kato-finite-bound-refined} also gives the same estimate
with \(\tau\sqrt{\Lambda}\) in place of \(\tau\kappa_M\).

For bounded \(A=G\), \(A_{\#}=G_{P}\), and \(W=G-G_{P}\), the
capture hypothesis \prettyref{eq:2-2} implies
\prettyref{eq:kato-half-bound} with
\(\eta=\epsilon\sqrt{\|G-G_{P}\|}\), because
\(S(s)M\subseteq\mathcal C_G(M)\). Moreover, if
\(N=\mathcal C_G(M)\), then
\[
\kappa_M=\|G^{1/2}|_M\|
\le
\|G^{1/2}|_N\|
=
\sqrt{\lambda_N}.
\]
Thus \prettyref{eq:kato-finite-bound-refined} recovers the two forms of
the estimate in \prettyref{thm:2-2}. The only additional point in the
unbounded argument is the regularization
\(S(s)K\subseteq D(A^{1/2})\) for \(s>0\).
\end{rem}

\section{Nonautonomous NTK evolution and piecewise-frozen approximation}\label{sec:na-ntk}

We now return to the semigroup perturbation framework and allow the
Jacobian to vary with time. The fixed-kernel model from \prettyref{sec:2}
is the basic lazy-training approximation; in the infinite-width regime,
the NTK becomes effectively constant and the residual obeys a linear
differential equation in some function space
\cite{jacot2018neural,lee2019wide,chizat2019lazy}. At finite width,
however, the empirical NTK may evolve substantially during training; see
\cite{huang2020dynamics,seleznova2022analyzing,vyas2022limitations,atanasov2022silent,loo2022evolution}
for theoretical and empirical accounts of this kernel evolution. These
results motivate a piecewise-frozen model; instead of holding one
Jacobian $T$ fixed on all of $[0,\tau]$, we split the interval into
subintervals and freeze the linearization separately on each
subinterval. This approach is natural from the viewpoint of
nonautonomous evolution equations and product formulas
\cite{batkai2011operator,batkai2012norm}, and is consistent with the
product-integral treatment of time-varying kernels used by Boix-Adsera
and Littwin in their analysis of NTK validity \cite{boix2023tight}.
Throughout this section, fix $\tau>0$, and assume that
\[
T(\cdot):[0,\tau]\to\mathcal{B}(H,K)
\]
is norm-continuous and bounded. Set
\[
G(t):=T(t)T(t)^{*},\qquad 0\le t\le\tau.
\]
Then each $G(t)$ is bounded, self-adjoint, and positive on $K$. We consider
the nonautonomous residual equation
\begin{equation}
\label{eq:na-1}
r'(t)=-G(t)r(t),\qquad r(0)=r_{0}\in K.
\end{equation}
By the standard theory for bounded nonautonomous evolution equations, there is a
unique evolution family $U(t,s)$, $0\le s\le t\le\tau$, such that
\begin{equation}
\label{eq:na-2}
r(t)=U(t,s)r(s),\qquad \partial_{t}U(t,s)=-G(t)U(t,s),\qquad U(s,s)=I.
\end{equation}
The positivity of $G(t)$ implies that $U(t,s)$ is contractive.

As in the frozen setting, we fix a closed task space $M\subseteq K$.
The time-dependent evolution may carry vectors initially in $M$ through
additional output directions. Over the interval $[0,\tau]$, the task therefore
activates not only $M$ itself, but all directions visited by trajectories
$t\mapsto U(t,0)v$ with $v\in M$. This motivates the finite-horizon
nonautonomous orbit hull
\begin{equation}
\label{eq:na-3}
M_{\tau}^{\mathrm{na}}:=\overline{\operatorname{span}}\left\{ U(t,0)v:v\in M,\ 0\le t\le\tau\right\} .
\end{equation}
Thus, $M_{\tau}^{\mathrm{na}}$ is the portion of the output space activated by
the task over $[0,\tau]$; it is the smallest closed linear subspace containing
every state reached from $M$ during that interval. Unlike the cyclic space
$\mathcal C_G(M)$ in the autonomous setting, $M_{\tau}^{\mathrm{na}}$ can
genuinely depend on the horizon: as $t$ varies, the generators $G(t)$ may
couple the task to new directions, and there is no single fixed generator whose
cyclic subspace captures the whole evolution. In the piecewise-frozen analysis
below, this principle is implemented more locally through the propagated task
spaces $M_k$ and the corresponding local orbit spaces $N_k$, on which the
intervalwise tangent-capture hypotheses are imposed.

\begin{thm}
\label{thm:na-contract}
For every $0\le s\le t\le\tau$, one has
\[
\|U(t,s)\|\le1.
\]
\end{thm}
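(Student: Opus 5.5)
The plan is an energy argument applied directly to the evolution equation \prettyref{eq:na-2}. Fix $0\le s\le\tau$ and $x\in K$. Set $r(t):=U(t,s)x$ for $t\in[s,\tau]$. Because $T(\cdot)$ is norm-continuous and bounded, $G(\cdot)$ is norm-continuous. The evolution family for a bounded, norm-continuous generator is strongly (indeed norm) differentiable in $t$, so $r$ is a $C^{1}$ function $[s,\tau]\to K$ with $r'(t)=-G(t)r(t)$ and $r(s)=x$. We may take this from the standard theory already invoked in the construction of $U(t,s)$.

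Next we differentiate the squared norm. By the product rule in the Hilbert space $K$,
\[
\frac{d}{dt}\|r(t)\|_{K}^{2}=2\,\mathrm{Re}\langle r'(t),r(t)\rangle_{K}=-2\langle G(t)r(t),r(t)\rangle_{K}=-2\|T(t)^{*}r(t)\|_{H}^{2}\le0 .
\]
The last equality uses $G(t)=T(t)T(t)^{*}$, exactly as in the proof of \prettyref{thm:2-2}. Integrating from $s$ to $t$ gives
\[
\|U(t,s)x\|_{K}^{2}=\|x\|_{K}^{2}-2\int_{s}^{t}\|T(\sigma)^{*}U(\sigma,s)x\|_{H}^{2}\,d\sigma\le\|x\|_{K}^{2}.
\]
Taking the supremum over unit vectors $x$ yields $\|U(t,s)\|\le1$. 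This identity is the nonautonomous analogue of the energy identity used for $S(t)$ in \prettyref{thm:2-2}, and it may be worth recording for later use.

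The only point needing care is justifying differentiability of $t\mapsto U(t,s)x$, which is where the regularity of $U$ enters. If we prefer not to rely on it, there is an alternative route. Approximate $U(t,s)$ in operator norm by ordered products $\prod_{j}e^{-\Delta_{j}G(\sigma_{j})}$ over partitions of $[s,t]$, using the product-integral / piecewise-frozen construction. Each factor is a contraction because $G(\sigma_{j})\ge0$, so each product is a contraction, and contractivity passes to the norm limit. This second route also matches the piecewise-frozen viewpoint of the section. I would give the energy argument as the main proof and mention the product-formula route as a remark.
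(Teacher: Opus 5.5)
Your main argument is correct and is essentially the paper's proof: set $w(t)=U(t,s)u$, use $w'(t)=-G(t)w(t)$ to get $\frac{d}{dt}\|w(t)\|_K^2=-2\langle G(t)w(t),w(t)\rangle_K\le 0$, and take the supremum over unit vectors. Your integrated identity with $\|T(\sigma)^*U(\sigma,s)x\|_H^2$ is a harmless refinement of this. One caution on the product-formula remark: in the paper, the operator-norm convergence of the piecewise-frozen products (Theorem~\ref{thm:na-freezing}) is itself proved using the contractivity of $U$, so that route would be circular unless you justify the convergence independently, e.g.\ by using the Gronwall bound $\|U(t,s)\|\le\exp\bigl(\int_s^t\|G(\sigma)\|\,d\sigma\bigr)$ in the telescoping estimate.
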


\begin{proof}
Fix $u\in K$, and set $w(t):=U(t,s)u$. Then $w'(t)=-G(t)w(t)$, so
\[
\frac{d}{dt}\|w(t)\|_{K}^{2}=2\Re\langle w'(t),w(t)\rangle_{K}
=-2\langle G(t)w(t),w(t)\rangle_{K}\le0.
\]
Hence $\|w(t)\|_{K}\le\|u\|_{K}$ for $t\ge s$, and taking the supremum over
unit vectors $u$ gives the conclusion.
\end{proof}

We next approximate the nonautonomous dynamics by freezing the Jacobian on each
subinterval of a partition.
Let
\[
\Pi:\qquad0=t_{0}<t_{1}<\cdots<t_{N}=\tau
\]
be a partition, set $\Delta t_{n}:=t_{n+1}-t_{n}$, and choose reference points
$\xi_{n}\in[t_{n},t_{n+1}]$. Define
\begin{equation}
\label{eq:na-4}
T_{n}:=T(\xi_{n}),\qquad G_{n}:=T_{n}T_{n}^{*},\qquad 0\le n\le N-1.
\end{equation}
For $m=1,\dots,N$, define the piecewise-frozen propagator by
\begin{equation}
\label{eq:na-5}
U_{\Pi}(t_{m},0):=e^{-\Delta t_{m-1}G_{m-1}}\cdots e^{-\Delta t_{0}G_{0}}.
\end{equation}
More generally, if $0\le k<m\le N$, set
\begin{equation}
\label{eq:na-6}
U_{\Pi}(t_{m},t_{k}):=e^{-\Delta t_{m-1}G_{m-1}}\cdots e^{-\Delta t_{k}G_{k}},
\qquad U_{\Pi}(t_{k},t_{k}):=I.
\end{equation}

\begin{thm}
\label{thm:na-freezing}
For every $m=1,\dots,N$,
\begin{equation}
\label{eq:na-7}
\left\|\left(U(t_{m},0)-U_{\Pi}(t_{m},0)\right)\big|_{M}\right\|
\le
\sum_{k=0}^{m-1}\int_{t_{k}}^{t_{k+1}}\|G(s)-G_{k}\|\,ds.
\end{equation}
Consequently,
\begin{equation}
\label{eq:na-8}
\sup_{0\le m\le N}\left\|\left(U(t_{m},0)-U_{\Pi}(t_{m},0)\right)\big|_{M}\right\|
\le
\sum_{k=0}^{N-1}\int_{t_{k}}^{t_{k+1}}\|G(s)-G_{k}\|\,ds.
\end{equation}
\end{thm}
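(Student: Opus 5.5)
We will prove \prettyref{eq:na-7} by a telescoping argument over the partition, using a Duhamel estimate on each subinterval. The key facts are contractivity of every factor, from \prettyref{thm:na-contract} and from $G_{k}\ge0$, and a local comparison on $[t_{k},t_{k+1}]$ between the exact evolution $U(t_{k+1},t_{k})$ and the frozen semigroup $e^{-\Delta t_{k}G_{k}}$. We work on all of $K$ and restrict to $M$ only at the end: the operator norm of the restriction is bounded by the full operator norm, so no task-space hypotheses are needed.

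\emph{Step 1 (local estimate).} For fixed $k$ and $u\in K$, set $w(t):=U(t,t_{k})u$ and $z(t):=e^{-(t-t_{k})G_{k}}u$ for $t\in[t_{k},t_{k+1}]$. Consider $\Phi(s):=e^{-(t-s)G_{k}}U(s,t_{k})u$. Differentiating in $s$ via \prettyref{eq:na-2} gives
\[
\Phi'(s)=e^{-(t-s)G_{k}}\bigl(G_{k}-G(s)\bigr)U(s,t_{k})u.
\]
This derivative exists in norm because $G(\cdot)$ is norm-continuous and bounded, and the evolution family is differentiable. Integrating from $t_{k}$ to $t$ yields the Duhamel formula
\[
U(t,t_{k})-e^{-(t-t_{k})G_{k}}=\int_{t_{k}}^{t}e^{-(t-s)G_{k}}\bigl(G_{k}-G(s)\bigr)U(s,t_{k})\,ds.
\]
Both outer factors are contractions, so
\[
\|U(t_{k+1},t_{k})-e^{-\Delta t_{k}G_{k}}\|\le\int_{t_{k}}^{t_{k+1}}\|G(s)-G_{k}\|\,ds.
\]

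\emph{Step 2 (telescoping).} The evolution property gives $U(t_{m},0)=U(t_{m},t_{m-1})\cdots U(t_{1},t_{0})$. Writing $V_{k}:=U(t_{k+1},t_{k})$ and $F_{k}:=e^{-\Delta t_{k}G_{k}}$, the standard identity is
\[
V_{m-1}\cdots V_{0}-F_{m-1}\cdots F_{0}=\sum_{k=0}^{m-1}V_{m-1}\cdots V_{k+1}\,(V_{k}-F_{k})\,F_{k-1}\cdots F_{0}.
\]
Every $V_{j}$ and $F_{j}$ has norm at most $1$, so the norm of the difference is at most $\sum_{k}\|V_{k}-F_{k}\|$. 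Step 1 bounds each term, and restricting to $M$ gives \prettyref{eq:na-7}.

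\emph{Step 3.} The right-hand side of \prettyref{eq:na-7} is nondecreasing in $m$, and the case $m=0$ is trivially zero. Taking the supremum therefore gives \prettyref{eq:na-8}.

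The only delicate point is justifying differentiability of $\Phi$ and the resulting Duhamel formula for the nonautonomous family. Since all generators are bounded and norm-continuous, this follows from the product rule applied to the norm-differentiable maps $s\mapsto e^{-(t-s)G_{k}}$ and $s\mapsto U(s,t_{k})$. We expect no real obstacle there.
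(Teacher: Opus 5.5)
Your proof is correct and follows essentially the same route as the paper: a local Duhamel estimate on each $[t_k,t_{k+1}]$, followed by the same telescoping identity and contractivity of every factor. The only difference is cosmetic. You write the Duhamel formula with the frozen semigroup on the left and differentiate $U(s,t_k)$ using the forward equation in \eqref{eq:na-2}, whereas the paper puts the exact propagator on the left and differentiates $U(t_{k+1},s)$ in its second argument via $\partial_s U(t_{k+1},s)=U(t_{k+1},s)G(s)$; both yield the same bound.
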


\begin{proof}
Fix $k\in\{0,\dots,N-1\}$, and define
\[
V(s):=U(t_{k+1},s)e^{-(s-t_{k})G_{k}},\qquad s\in[t_{k},t_{k+1}].
\]
Since $\partial_{s}U(t_{k+1},s)=U(t_{k+1},s)G(s)$ and
$\partial_{s}e^{-(s-t_{k})G_{k}}=-G_{k}e^{-(s-t_{k})G_{k}}$, we get
\[
V'(s)=U(t_{k+1},s)\left(G(s)-G_{k}\right)e^{-(s-t_{k})G_{k}}.
\]
Integrating from $t_{k}$ to $t_{k+1}$ yields
\begin{equation}
\label{eq:na-9}
U(t_{k+1},t_{k})-e^{-\Delta t_{k}G_{k}}
=
\int_{t_{k}}^{t_{k+1}}U(t_{k+1},s)\left(G_{k}-G(s)\right)e^{-(s-t_{k})G_{k}}\,ds.
\end{equation}
By Theorem~\ref{thm:na-contract} and the positivity of $G_{k}$,
\begin{equation}
\label{eq:na-10}
\left\|U(t_{k+1},t_{k})-e^{-\Delta t_{k}G_{k}}\right\|
\le
\int_{t_{k}}^{t_{k+1}}\|G(s)-G_{k}\|\,ds.
\end{equation}
Now write
\[
U(t_{m},0)=U(t_{m},t_{m-1})\cdots U(t_{1},t_{0}),
\qquad
U_{\Pi}(t_{m},0)=e^{-\Delta t_{m-1}G_{m-1}}\cdots e^{-\Delta t_{0}G_{0}}.
\]
The telescoping product identity gives
\begin{align*}
U(t_{m},0)-U_{\Pi}(t_{m},0)
&=\sum_{k=0}^{m-1}
U(t_{m},t_{k+1})\left(U(t_{k+1},t_{k})-e^{-\Delta t_{k}G_{k}}\right)
U_{\Pi}(t_{k},0).
\end{align*}
All factors $U(t_{m},t_{k+1})$ and $U_{\Pi}(t_{k},0)$ are contractions, so
\begin{align*}
\left\|\left(U(t_{m},0)-U_{\Pi}(t_{m},0)\right)\big|_{M}\right\|
&\le\sum_{k=0}^{m-1}
\left\|U(t_{k+1},t_{k})-e^{-\Delta t_{k}G_{k}}\right\|.
\end{align*}
Combining this with \prettyref{eq:na-10} gives \prettyref{eq:na-7}. The bound
\prettyref{eq:na-8} follows by taking the supremum over $m$.
\end{proof}

\begin{cor}
\label{cor:na-holder}
Assume that there exist $\kappa>0$ and $\alpha\in(0,1]$ such that
\begin{equation}
\label{eq:na-11}
\|G(t)-G(s)\|\le \kappa|t-s|^{\alpha},\qquad 0\le s,t\le\tau.
\end{equation}
If the reference points are chosen as midpoints,
\[
\xi_{k}=\frac{t_{k}+t_{k+1}}{2},\qquad 0\le k\le N-1,
\]
then
\begin{equation}
\label{eq:na-12}
\sup_{0\le m\le N}\left\|\left(U(t_{m},0)-U_{\Pi}(t_{m},0)\right)\big|_{M}\right\|
\le
\frac{\kappa\tau}{2^{\alpha}(\alpha+1)}|\Pi|^{\alpha},
\end{equation}
where
\[
|\Pi|:=\max_{0\le k\le N-1}\Delta t_{k}.
\]
\end{cor}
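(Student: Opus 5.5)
The plan is to apply \prettyref{thm:na-freezing} directly and bound each subinterval integral with the Hölder hypothesis \prettyref{eq:na-11}. By \prettyref{eq:na-8}, it suffices to show that
\[
\sum_{k=0}^{N-1}\int_{t_{k}}^{t_{k+1}}\|G(s)-G_{k}\|\,ds
\le
\frac{\kappa\tau}{2^{\alpha}(\alpha+1)}|\Pi|^{\alpha}.
\]
Since $G_{k}=T(\xi_{k})T(\xi_{k})^{*}=G(\xi_{k})$, condition \prettyref{eq:na-11} gives the pointwise bound $\|G(s)-G_{k}\|\le\kappa|s-\xi_{k}|^{\alpha}$ for $s\in[t_{k},t_{k+1}]$.

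Next we compute the local integral with the midpoint choice $\xi_{k}=(t_{k}+t_{k+1})/2$. Split $[t_{k},t_{k+1}]$ at $\xi_{k}$ into two halves of length $\Delta t_{k}/2$. This gives
\[
\int_{t_{k}}^{t_{k+1}}|s-\xi_{k}|^{\alpha}\,ds
=2\int_{0}^{\Delta t_{k}/2}r^{\alpha}\,dr
=\frac{2}{\alpha+1}\Big(\frac{\Delta t_{k}}{2}\Big)^{\alpha+1}
=\frac{\Delta t_{k}^{\alpha+1}}{2^{\alpha}(\alpha+1)}.
\]

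Finally, bound $\Delta t_{k}^{\alpha+1}\le|\Pi|^{\alpha}\Delta t_{k}$ and sum, using $\sum_{k}\Delta t_{k}=\tau$. This yields exactly the right-hand side of \prettyref{eq:na-12}. For $m=0$ the difference is $I-I=0$, so including it in the supremum costs nothing.

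There is no real obstacle here. The only points to check are that $G_{k}$ coincides with $G(\xi_{k})$, so that the Hölder bound applies with the reference point itself, and that the half-interval integral carries the factor $2^{-\alpha}$ correctly.
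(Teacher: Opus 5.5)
Your proposal is correct and follows the paper's own proof essentially step for step: Theorem~\ref{thm:na-freezing}, the pointwise H\"older bound via $G_k=G(\xi_k)$, the midpoint integral $\Delta t_k^{\alpha+1}/(2^{\alpha}(\alpha+1))$, the bound $\Delta t_k^{\alpha+1}\le|\Pi|^{\alpha}\Delta t_k$, and $\sum_k\Delta t_k=\tau$. Your remarks on the $m=0$ term and on identifying $G_k$ with $G(\xi_k)$ make explicit two points the paper leaves implicit.
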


\begin{proof}
By \prettyref{thm:na-freezing},
\[
\left\|\left(U(t_{m},0)-U_{\Pi}(t_{m},0)\right)\big|_{M}\right\|
\le
\sum_{k=0}^{m-1}\int_{t_{k}}^{t_{k+1}}\kappa|s-\xi_{k}|^{\alpha}\,ds.
\]
Since $\xi_{k}$ is the midpoint of $[t_{k},t_{k+1}]$,
\[
\int_{t_{k}}^{t_{k+1}}|s-\xi_{k}|^{\alpha}\,ds
=2\int_{0}^{\Delta t_{k}/2}x^{\alpha}\,dx
=\frac{\Delta t_{k}^{\alpha+1}}{2^{\alpha}(\alpha+1)}
\le
\frac{|\Pi|^{\alpha}\Delta t_{k}}{2^{\alpha}(\alpha+1)}.
\]
Summing over $k$ gives \prettyref{eq:na-12}.
\end{proof}

We now add, on each subinterval, a restriction of parameter directions analogous
to the fixed-kernel perturbation studied earlier.
Let $P_{0},\dots,P_{N-1}$ be orthogonal projections on $H$, and define
\begin{equation}
\label{eq:na-13}
G_{n,P}:=T_{n}P_{n}T_{n}^{*},\qquad
q_{n}:=\|G_{n}-G_{n,P}\|,\qquad 0\le n\le N-1.
\end{equation}
Set
\begin{equation}
\label{eq:na-14}
U_{\Pi,P}(t_{m},0):=e^{-\Delta t_{m-1}G_{m-1,P}}\cdots e^{-\Delta t_{0}G_{0,P}},
\qquad m=1,\dots,N.
\end{equation}
Define recursively the propagated task spaces
\begin{equation}
\label{eq:na-15}
M_{0}:=M,
\qquad
M_{k}:=\overline{U_{\Pi}(t_{k},0)M},\qquad 0\le k\le N,
\end{equation}
and the local orbit-generated spaces
\begin{equation}
\label{eq:na-16}
N_{k}:=\overline{\operatorname{span}}\left\{ e^{-sG_{k}}v:v\in M_{k},\ 0\le s\le\Delta t_{k}\right\},
\qquad 0\le k\le N-1.
\end{equation}
Finally, set
\begin{equation}
\label{eq:na-17}
\lambda_{k}:=\|G_{k}|_{N_{k}}\|,\qquad 0\le k\le N-1.
\end{equation}

\begin{thm}
\label{thm:na-piecewise-prune}
Assume that for each $k=0,\dots,N-1$ there exists $\epsilon_{k}\in[0,1)$ such that
\begin{equation}
\label{eq:na-18}
\|(I-P_{k})T_{k}^{*}u\|_{H}\le\epsilon_{k}\|T_{k}^{*}u\|_{H},\qquad \forall u\in N_{k}.
\end{equation}
Then for every $m=1,\dots,N$,
\begin{equation}
\label{eq:na-19}
\left\|\left(U_{\Pi,P}(t_{m},0)-U_{\Pi}(t_{m},0)\right)\big|_{M}\right\|
\le
\sum_{k=0}^{m-1}\epsilon_{k}
\min\left\{
\Delta t_{k}\sqrt{q_{k}\lambda_{k}},\sqrt{\Delta t_{k}q_{k}/2}
\right\} .
\end{equation}
Consequently,
\begin{equation}
\label{eq:na-20}
\sup_{0\le m\le N}\left\|\left(U_{\Pi,P}(t_{m},0)-U_{\Pi}(t_{m},0)\right)\big|_{M}\right\|
\le
\sum_{k=0}^{N-1}\epsilon_{k}
\min\left\{
\Delta t_{k}\sqrt{q_{k}\lambda_{k}},\sqrt{\Delta t_{k}q_{k}/2}
\right\} .
\end{equation}
\end{thm}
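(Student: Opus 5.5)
The plan is to combine a telescoping product identity, as in the proof of \prettyref{thm:na-freezing}, with the single-interval estimate of \prettyref{thm:2-2} applied on each subinterval. Write $A_{k}:=e^{-\Delta t_{k}G_{k,P}}$ and $B_{k}:=e^{-\Delta t_{k}G_{k}}$. Then
\[
U_{\Pi,P}(t_{m},0)-U_{\Pi}(t_{m},0)
=\sum_{k=0}^{m-1}A_{m-1}\cdots A_{k+1}\,(A_{k}-B_{k})\,B_{k-1}\cdots B_{0}.
\]
The trailing product is exactly $U_{\Pi}(t_{k},0)$. The leading products are contractions because each $G_{n,P}=T_{n}P_{n}T_{n}^{*}\ge0$. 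Hence, for $v\in M$,
\[
\|(U_{\Pi,P}(t_{m},0)-U_{\Pi}(t_{m},0))v\|\le\sum_{k=0}^{m-1}\|(A_{k}-B_{k})U_{\Pi}(t_{k},0)v\|.
\]
The key point is that $U_{\Pi}(t_{k},0)v\in M_{k}$ by \prettyref{eq:na-15}. It therefore suffices to bound $\|(A_{k}-B_{k})|_{M_{k}}\|$, since $\|U_{\Pi}(t_{k},0)v\|\le\|v\|$.

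For each fixed $k$, I will rerun the proof of \prettyref{thm:2-2} with $T=T_{k}$, $P=P_{k}$, task space $M_{k}$ and horizon $\Delta t_{k}$, but with $\mathcal C_{G_{k}}(M_{k})$ replaced by the orbit space $N_{k}$ of \prettyref{eq:na-16}. By \prettyref{lem:cyclic-orbit} with $\sigma=\Delta t_{k}$, the space $N_{k}$ actually coincides with $\mathcal C_{G_{k}}(M_{k})$. So \prettyref{thm:2-2} applies verbatim, with $N=N_{k}$, $\lambda_{N}=\lambda_{k}$, $\|G-G_{P}\|=q_{k}$ and $\epsilon=\epsilon_{k}$. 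Evaluated at $t=\Delta t_{k}$, it gives
\[
\|(A_{k}-B_{k})|_{M_{k}}\|\le\epsilon_{k}\min\{\Delta t_{k}\sqrt{q_{k}\lambda_{k}},\sqrt{\Delta t_{k}q_{k}/2}\}.
\]
Even without invoking that identification, the footnote argument works directly. The Duhamel proof only uses the hypothesis on the vectors $e^{-sG_{k}}w$ with $w\in M_{k}$ and $0\le s\le\Delta t_{k}$, and these lie in $N_{k}$ by definition. It also uses the bound $\|G_{k}^{1/2}e^{-sG_{k}}w\|\le\sqrt{\lambda_{k}}\|w\|$, which needs $G_{k}^{1/2}$ restricted to $N_{k}$ to have norm $\sqrt{\lambda_{k}}$. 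That bound holds once $N_{k}$ is known to be $G_{k}$-invariant, and invariance is exactly what \prettyref{lem:cyclic-orbit} supplies.

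Summing over $k$ and taking the supremum over unit $v\in M$ gives \prettyref{eq:na-19}. The bound \prettyref{eq:na-20} then follows by taking the supremum over $m$; the case $m=0$ is trivial. The main point to get right is the ordering in the telescoping sum. The unpruned factors must sit to the right, so that the perturbation $A_{k}-B_{k}$ acts on $U_{\Pi}(t_{k},0)M\subseteq M_{k}$, where the local capture hypothesis \prettyref{eq:na-18} is available. The pruned contractions sit to the left, where only their contractivity is needed. Placing the factors the other way would require capture hypotheses along pruned trajectories, which are not assumed.
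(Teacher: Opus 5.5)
Your proposal is correct and follows essentially the paper's route: the same telescoping identity, with the pruned contractions on the left and the unpruned factors on the right acting on $U_{\Pi}(t_k,0)M\subseteq M_k$, combined with the per-interval Duhamel--energy estimate of \prettyref{thm:2-2}. The paper reruns that estimate inline on $N_k$ rather than citing it through $N_k=\mathcal C_{G_k}(M_k)$ from \prettyref{lem:cyclic-orbit}; your identification is valid since $M_k$ is closed and $\Delta t_k>0$, and in any case the bound $\|G_k^{1/2}u\|\le\sqrt{\lambda_k}\|u\|$ on $N_k$ needs no invariance, because $\|G_k^{1/2}u\|^2=\langle G_ku,u\rangle\le\|G_ku\|\|u\|$, exactly as in the proof of \prettyref{thm:na-discrete}.
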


\begin{proof}
Fix $k$. Set
\[
E_{k}:=e^{-\Delta t_{k}G_{k}},\qquad E_{k,P}:=e^{-\Delta t_{k}G_{k,P}}.
\]
Since $G_{k}\ge0$ and $G_{k,P}\ge0$, both $E_{k}$ and $E_{k,P}$ are contractions.
By Duhamel's formula, for $0\le t\le\Delta t_{k}$,
\[
e^{-tG_{k,P}}-e^{-tG_{k}}
=\int_{0}^{t}e^{-(t-s)G_{k,P}}\left(G_{k}-G_{k,P}\right)e^{-sG_{k}}\,ds.
\]
Hence for every $v\in M_{k}$,
\begin{equation}
\label{eq:na-21}
\left\|\left(e^{-tG_{k,P}}-e^{-tG_{k}}\right)v\right\|_{K}
\le
\int_{0}^{t}\left\|\left(G_{k}-G_{k,P}\right)e^{-sG_{k}}v\right\|_{K}\,ds.
\end{equation}
Now
\[
\left(G_{k}-G_{k,P}\right)u=T_{k}(I-P_{k})T_{k}^{*}u,
\]
and, as in the proof of \prettyref{thm:2-2},
\[
G_{k}-G_{k,P}=\big[T_{k}(I-P_{k})\big]
\big[T_{k}(I-P_{k})\big]^{*}.
\]
Thus, if $u\in N_{k}$, then by \prettyref{eq:na-18},
\begin{align}
\left\|\left(G_{k}-G_{k,P}\right)u\right\|_{K}
&\le\sqrt{q_{k}}\,\|(I-P_{k})T_{k}^{*}u\|_{H}\nonumber\\
&\le\epsilon_{k}\sqrt{q_{k}}\,\|T_{k}^{*}u\|_{H}
=\epsilon_{k}\sqrt{q_{k}}\,\|G_{k}^{1/2}u\|_{K}.\label{eq:na-22}
\end{align}
Since $v\in M_{k}$ and $0\le s\le\Delta t_{k}$ imply $e^{-sG_{k}}v\in N_{k}$, we may use
\prettyref{eq:na-22} in \prettyref{eq:na-21}. This gives
\begin{equation}
\label{eq:na-23}
\left\|\left(e^{-tG_{k,P}}-e^{-tG_{k}}\right)v\right\|_{K}
\le
\epsilon_{k}\sqrt{q_{k}}\int_{0}^{t}\|G_{k}^{1/2}e^{-sG_{k}}v\|_{K}\,ds.
\end{equation}
We estimate the integral in two ways.
First, since $e^{-sG_{k}}v\in N_{k}$,
\[
\|G_{k}^{1/2}e^{-sG_{k}}v\|_{K}
\le\|G_{k}^{1/2}|_{N_{k}}\|\,\|e^{-sG_{k}}v\|_{K}
\le\sqrt{\lambda_{k}}\,\|v\|_{K}.
\]
Substituting into \prettyref{eq:na-23} yields
\begin{equation}
\label{eq:na-24}
\left\|\left(e^{-tG_{k,P}}-e^{-tG_{k}}\right)v\right\|_{K}
\le
\epsilon_{k}t\sqrt{q_{k}\lambda_{k}}\,\|v\|_{K}.
\end{equation}
Second, by Cauchy--Schwarz,
\[
\int_{0}^{t}\|G_{k}^{1/2}e^{-sG_{k}}v\|_{K}\,ds
\le\sqrt{t}\left(\int_{0}^{t}\|G_{k}^{1/2}e^{-sG_{k}}v\|_{K}^{2}\,ds\right)^{1/2}.
\]
But
\[
\frac{d}{ds}\|e^{-sG_{k}}v\|_{K}^{2}
=-2\|G_{k}^{1/2}e^{-sG_{k}}v\|_{K}^{2},
\]
so
\[
\int_{0}^{t}\|G_{k}^{1/2}e^{-sG_{k}}v\|_{K}^{2}\,ds
=\frac12\left(\|v\|_{K}^{2}-\|e^{-tG_{k}}v\|_{K}^{2}\right)
\le\frac12\|v\|_{K}^{2}.
\]
Hence
\begin{equation}
\label{eq:na-25}
\left\|\left(e^{-tG_{k,P}}-e^{-tG_{k}}\right)v\right\|_{K}
\le
\epsilon_{k}\sqrt{tq_{k}/2}\,\|v\|_{K}.
\end{equation}
Taking $t=\Delta t_{k}$ in \prettyref{eq:na-24} and \prettyref{eq:na-25}, we obtain
\begin{equation}
\label{eq:na-26}
\|(E_{k,P}-E_{k})|_{M_{k}}\|
\le
\epsilon_{k}\min\left\{
\Delta t_{k}\sqrt{q_{k}\lambda_{k}},\sqrt{\Delta t_{k}q_{k}/2}
\right\} .
\end{equation}
Now use the telescoping product identity
\[
U_{\Pi,P}(t_{m},0)-U_{\Pi}(t_{m},0)
=\sum_{k=0}^{m-1}
U_{\Pi,P}(t_{m},t_{k+1})(E_{k,P}-E_{k})U_{\Pi}(t_{k},0).
\]
All factors $U_{\Pi,P}(t_{m},t_{k+1})$ and $U_{\Pi}(t_{k},0)$ are contractions. Since
$U_{\Pi}(t_{k},0)M\subseteq M_{k}$, \prettyref{eq:na-26} gives
\[
\left\|\left(U_{\Pi,P}(t_{m},0)-U_{\Pi}(t_{m},0)\right)\big|_{M}\right\|
\le
\sum_{k=0}^{m-1}\|(E_{k,P}-E_{k})|_{M_{k}}\|,
\]
which is exactly \prettyref{eq:na-19}. Taking the supremum over $m$ gives
\prettyref{eq:na-20}.
\end{proof}

\begin{cor}
\label{cor:na-combined}
Under the hypotheses of \prettyref{thm:na-freezing} and
\prettyref{thm:na-piecewise-prune}, one has for every $m=1,\dots,N$,
\begin{align}
\left\|\left(U(t_{m},0)-U_{\Pi,P}(t_{m},0)\right)\big|_{M}\right\|
&\le
\sum_{k=0}^{m-1}\int_{t_{k}}^{t_{k+1}}\|G(s)-G_{k}\|\,ds\nonumber\\
&\quad+
\sum_{k=0}^{m-1}\epsilon_{k}
\min\left\{
\Delta t_{k}\sqrt{q_{k}\lambda_{k}},\sqrt{\Delta t_{k}q_{k}/2}
\right\} .
\label{eq:na-27}
\end{align}
In particular,
\begin{align}
\sup_{0\le m\le N}\left\|\left(U(t_{m},0)-U_{\Pi,P}(t_{m},0)\right)\big|_{M}\right\|
&\le
\sum_{k=0}^{N-1}\int_{t_{k}}^{t_{k+1}}\|G(s)-G_{k}\|\,ds\nonumber\\
&\quad+
\sum_{k=0}^{N-1}\epsilon_{k}
\min\left\{
\Delta t_{k}\sqrt{q_{k}\lambda_{k}},\sqrt{\Delta t_{k}q_{k}/2}
\right\} .
\label{eq:na-28}
\end{align}
If, in addition, \prettyref{eq:na-11} holds and the $\xi_{k}$ are midpoints, then
\begin{align}
\sup_{0\le m\le N}\left\|\left(U(t_{m},0)-U_{\Pi,P}(t_{m},0)\right)\big|_{M}\right\|
&\le
\frac{\kappa\tau}{2^{\alpha}(\alpha+1)}|\Pi|^{\alpha}\nonumber\\
&\quad+
\sum_{k=0}^{N-1}\epsilon_{k}
\min\left\{
\Delta t_{k}\sqrt{q_{k}\lambda_{k}},\sqrt{\Delta t_{k}q_{k}/2}
\right\} .
\label{eq:na-29}
\end{align}
\end{cor}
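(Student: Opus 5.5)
The proof is a triangle inequality inserting the piecewise-frozen unpruned propagator $U_{\Pi}(t_{m},0)$ between the true evolution and the pruned piecewise-frozen propagator. For each $m=1,\dots,N$ and $v\in M$ we write
\[
U(t_{m},0)v-U_{\Pi,P}(t_{m},0)v
=\bigl(U(t_{m},0)-U_{\Pi}(t_{m},0)\bigr)v+\bigl(U_{\Pi}(t_{m},0)-U_{\Pi,P}(t_{m},0)\bigr)v .
\]
Taking norms and the supremum over unit vectors $v\in M$ bounds the left side of \prettyref{eq:na-27} by the sum of the two restricted operator norms. The first is controlled by \prettyref{eq:na-7} of \prettyref{thm:na-freezing}. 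The second, up to a sign that does not affect the norm, is controlled by \prettyref{eq:na-19} of \prettyref{thm:na-piecewise-prune}. Adding these two bounds gives \prettyref{eq:na-27} directly.

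For \prettyref{eq:na-28}, both sums on the right of \prettyref{eq:na-27} have nonnegative terms, so each is nondecreasing in $m$ and bounded by the corresponding sum up to $N-1$. The case $m=0$ is trivial, since both propagators equal $I$. Taking the supremum over $0\le m\le N$ then gives \prettyref{eq:na-28}.

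For \prettyref{eq:na-29}, I will not use the sup bound \prettyref{eq:na-12} itself. Instead I reuse the pointwise computation from the proof of \prettyref{cor:na-holder}: with midpoint $\xi_k$ and the H\"older condition \prettyref{eq:na-11},
\[
\int_{t_k}^{t_{k+1}}\|G(s)-G_k\|\,ds\le\frac{\kappa|\Pi|^{\alpha}\Delta t_k}{2^{\alpha}(\alpha+1)} .
\]
Summing over $k=0,\dots,N-1$ bounds the first sum in \prettyref{eq:na-28} by $\kappa\tau|\Pi|^{\alpha}/(2^{\alpha}(\alpha+1))$, which is \prettyref{eq:na-29}.

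There is no real obstacle. The one point to check is that the two theorems compare against the same intermediate object: both must use the same partition $\Pi$, the same reference points $\xi_k$, and hence the same frozen operators $G_k$. This is what the phrase ``under the hypotheses of both'' ensures, so $U_{\Pi}(t_m,0)$ is literally the same operator in \prettyref{eq:na-7} and \prettyref{eq:na-19}.
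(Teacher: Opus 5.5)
Your proposal is correct and follows the argument the paper intends; the corollary is stated without a separate proof, and its natural proof is yours. You insert $U_{\Pi}(t_m,0)$, apply the triangle inequality together with \eqref{eq:na-7} and \eqref{eq:na-19}, bound the nonnegative partial sums by the full sums, and use the midpoint H\"older computation from \prettyref{cor:na-holder}; rederiving the per-interval integral bound is fine, though \eqref{eq:na-12} could also be applied directly, since the triangle inequality holds for each $m$ before the supremum is taken.
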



\begin{rem}
\label{rem:na-local-refine}
Assume, in addition, that $K$ is finite-dimensional and that
$G_{k,P}N_k\subseteq N_k$ for every $k$. Then the decomposition developed
earlier in the paper into diagonal attenuation and off-diagonal mixing may also
be applied interval-by-interval. Namely, one may decompose each local operator
$G_{k,P}|_{N_{k}}$ relative to
the spectral decomposition of $G_{k}|_{N_{k}}$, estimate the local diagonal and
mixing terms exactly as in the frozen case, and then sum the resulting local
bounds over $k$ by the same telescoping argument used above. Thus the earlier
refinement survives in the piecewise-frozen setting with no essential change in
the proof.
\end{rem}

\section{A discrete Euler specialization}\label{subsec:na-discrete}

In Section~\ref{sec:na-ntk}, the nonautonomous problem through products of local semigroup factors $e^{-\Delta t_k G_k}$ was considered. We now pass to the corresponding discrete-time model by replacing each such factor with one explicit Euler step $I-\eta_k G_k$, and we derive the parallel perturbation estimate for the pruned products.
When the step size is not small, gradient descent is a discrete
iteration rather than a flow, and the discrete path need not be well
captured by the continuous-time model
\cite{bell2023exact}. 

In this section, we present the output-side Euler products and the corresponding
comparison estimate. Let
$\eta_{0},\dots,\eta_{N-1}>0$ and define
\begin{equation}
\label{eq:na-30}
R_{n+1}=(I-\eta_{n}G_{n})R_{n},
\qquad
R_{n+1}^{P}=(I-\eta_{n}G_{n,P})R_{n}^{P},
\qquad 0\le n\le N-1.
\end{equation}
Equivalently,
\begin{equation}
\label{eq:na-31}
V_{\Pi}(m,0):=(I-\eta_{m-1}G_{m-1})\cdots(I-\eta_{0}G_{0}),
\end{equation}
\begin{equation}
\label{eq:na-32}
V_{\Pi,P}(m,0):=(I-\eta_{m-1}G_{m-1,P})\cdots(I-\eta_{0}G_{0,P}).
\end{equation}
Assume that
\begin{equation}
\label{eq:na-33}
\eta_{n}\|G_{n}\|\le2,
\qquad
\eta_{n}\|G_{n,P}\|\le2,
\qquad 0\le n\le N-1,
\end{equation}
so that each factor in \prettyref{eq:na-31}--\prettyref{eq:na-32} is a
contraction.
Define the propagated discrete task spaces
\begin{equation}
\label{eq:na-34}
M_{0}^{E}:=M,
\qquad
M_{k}^{E}:=\overline{V_{\Pi}(k,0)M},\qquad 0\le k\le N,
\end{equation}
and set
\begin{equation}
\label{eq:na-35}
\lambda_{k}^{E}:=\|G_{k}|_{M_{k}^{E}}\|,\qquad 0\le k\le N-1.
\end{equation}

\begin{thm}
\label{thm:na-discrete}
Assume that for each $k=0,\dots,N-1$ there exists $\epsilon_{k}\in[0,1)$, such that
\begin{equation}
\label{eq:na-36}
\|(I-P_{k})T_{k}^{*}u\|_{H}\le\epsilon_{k}\|T_{k}^{*}u\|_{H},
\qquad \forall u\in M_{k}^{E}.
\end{equation}
Then for every $m=1,\dots,N$,
\begin{equation}
\label{eq:na-37}
\left\|\left(V_{\Pi,P}(m,0)-V_{\Pi}(m,0)\right)\big|_{M}\right\|
\le
\sum_{k=0}^{m-1}\eta_{k}\epsilon_{k}\sqrt{q_{k}\lambda_{k}^{E}}.
\end{equation}
Consequently,
\begin{equation}
\label{eq:na-38}
\sup_{0\le m\le N}\left\|\left(V_{\Pi,P}(m,0)-V_{\Pi}(m,0)\right)\big|_{M}\right\|
\le
\sum_{k=0}^{N-1}\eta_{k}\epsilon_{k}\sqrt{q_{k}\lambda_{k}^{E}}.
\end{equation}
\end{thm}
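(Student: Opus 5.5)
The plan is to reuse the telescoping structure from the proof of \prettyref{thm:na-piecewise-prune}, with the local Duhamel step replaced by an exact one-step algebraic identity. For each $k$ set
\[
F_{k}:=I-\eta_{k}G_{k},\qquad F_{k,P}:=I-\eta_{k}G_{k,P},
\]
so that $F_{k,P}-F_{k}=\eta_{k}(G_{k}-G_{k,P})=\eta_{k}T_{k}(I-P_{k})T_{k}^{*}$. The first step is to check that each factor is a contraction under \prettyref{eq:na-33}. Since $G_{k}$ is self-adjoint with spectrum in $[0,\|G_{k}\|]$, the spectrum of $F_{k}$ lies in $[1-\eta_{k}\|G_{k}\|,1]\subseteq[-1,1]$, hence $\|F_{k}\|\le1$. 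The same argument gives $\|F_{k,P}\|\le1$, and therefore all partial products $V_{\Pi}(\cdot,\cdot)$ and $V_{\Pi,P}(\cdot,\cdot)$ are contractions.

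Next I will write the telescoping identity
\[
V_{\Pi,P}(m,0)-V_{\Pi}(m,0)=\sum_{k=0}^{m-1}V_{\Pi,P}(m,k+1)\,(F_{k,P}-F_{k})\,V_{\Pi}(k,0),
\]
where $V_{\Pi,P}(m,k+1)$ denotes the product of the pruned factors with indices $k+1,\dots,m-1$. Restricting to $M$, the vector $V_{\Pi}(k,0)v$ with $v\in M$ lies in $M_{k}^{E}$ and has norm at most $\|v\|$. It therefore suffices to prove the one-step bound
\[
\|(F_{k,P}-F_{k})|_{M_{k}^{E}}\|\le\eta_{k}\epsilon_{k}\sqrt{q_{k}\lambda_{k}^{E}}.
\]

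For the one-step bound, take $u\in M_{k}^{E}$. As in \prettyref{eq:2-6}, we have $\|(G_{k}-G_{k,P})u\|\le\sqrt{q_{k}}\,\|(I-P_{k})T_{k}^{*}u\|$, and by \prettyref{eq:na-36} this is at most $\epsilon_{k}\sqrt{q_{k}}\,\|G_{k}^{1/2}u\|$. The only delicate point, and the one I expect to be the main obstacle, is bounding $\|G_{k}^{1/2}u\|$ by $\sqrt{\lambda_{k}^{E}}\|u\|$. Here $M_{k}^{E}$ is not assumed to be $G_{k}$-invariant, so \prettyref{eq:cyclic-half-norm} is not available. Instead I will use the direct estimate
\[
\|G_{k}^{1/2}u\|^{2}=\langle G_{k}u,u\rangle\le\|G_{k}u\|\,\|u\|\le\lambda_{k}^{E}\|u\|^{2},
\]
which needs only the definition \prettyref{eq:na-35} of $\lambda_{k}^{E}$ as the norm of $G_{k}$ restricted to $M_{k}^{E}$. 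Since there is no integral in a single Euler step, no energy or Cauchy--Schwarz alternative arises, which explains why \prettyref{eq:na-37} has only the first branch of the minimum that appears in \prettyref{eq:na-19}.

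Summing the one-step bounds over $k$ gives \prettyref{eq:na-37}. The supremum bound \prettyref{eq:na-38} follows because the partial sums are nondecreasing in $m$, and the case $m=0$ is trivial.
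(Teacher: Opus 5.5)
Your proposal is correct and follows essentially the same route as the paper. It uses the one-step identity $F_{k,P}-F_{k}=\eta_{k}(G_{k}-G_{k,P})$, the factorization bound with $\sqrt{q_{k}}$, the Cauchy--Schwarz estimate $\|G_{k}^{1/2}u\|^{2}\le\|G_{k}u\|\,\|u\|\le\lambda_{k}^{E}\|u\|^{2}$ (needed because $M_{k}^{E}$ need not be $G_{k}$-invariant), and the telescoping sum whose factors are contractions by \eqref{eq:na-33}, with your spectral check of that contraction property being an accurate addition.
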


\begin{proof}
For $u\in M_{k}^{E}$,
\[
(G_{k}-G_{k,P})u=T_{k}(I-P_{k})T_{k}^{*}u.
\]
Moreover, since $G_{k}\ge0$, the Cauchy--Schwarz inequality and
\prettyref{eq:na-35} give
\[
\|G_{k}^{1/2}u\|_{K}^{2}
=\langle G_{k}u,u\rangle_{K}
\le\|G_{k}u\|_{K}\|u\|_{K}
\le\lambda_{k}^{E}\|u\|_{K}^{2}.
\]
Using the same factorization as in \prettyref{eq:na-22} and then
\prettyref{eq:na-36},
\begin{align*}
\|(G_{k}-G_{k,P})u\|_{K}
&\le\sqrt{q_{k}}\,\|(I-P_{k})T_{k}^{*}u\|_{H}\\
&\le\epsilon_{k}\sqrt{q_{k}}\,\|G_{k}^{1/2}u\|_{K}\\
&\le\epsilon_{k}\sqrt{q_{k}\lambda_{k}^{E}}\,\|u\|_{K}.
\end{align*}
Hence,
\begin{equation}
\label{eq:na-39}
\|(G_{k}-G_{k,P})|_{M_{k}^{E}}\|
\le
\epsilon_{k}\sqrt{q_{k}\lambda_{k}^{E}}.
\end{equation}
Now apply the telescoping identity
\[
V_{\Pi,P}(m,0)-V_{\Pi}(m,0)
=\sum_{k=0}^{m-1}
V_{\Pi,P}(m,k+1)\eta_{k}(G_{k}-G_{k,P})V_{\Pi}(k,0).
\]
By \prettyref{eq:na-33}, all factors $V_{\Pi,P}(m,k+1)$ and $V_{\Pi}(k,0)$ are
contractions. Since $V_{\Pi}(k,0)M\subseteq M_{k}^{E}$, \prettyref{eq:na-39}
yields
\[
\left\|\left(V_{\Pi,P}(m,0)-V_{\Pi}(m,0)\right)\big|_{M}\right\|
\le
\sum_{k=0}^{m-1}\eta_{k}\epsilon_{k}\sqrt{q_{k}\lambda_{k}^{E}},
\]
which is \prettyref{eq:na-37}. Taking the supremum over $m$ gives
\prettyref{eq:na-38}.
\end{proof}

The Euler products contain, as a special case, output-side projection
algorithms of Kaczmarz type.

\begin{cor}
\label{cor:na-kaczmarz}
Suppose that, for each $k$, the local kernel operator has the form
\begin{equation}
\label{eq:na-40}
G_{k}=\lambda_{k}Q_{k},
\qquad \lambda_{k}\ge0,
\end{equation}
where $Q_{k}$ is an orthogonal projection on $K$. Then the Euler step in
\prettyref{eq:na-30} becomes
\begin{equation}
\label{eq:na-41}
R_{k+1}=(I-\eta_{k}\lambda_{k}Q_{k})R_{k}.
\end{equation}
If $\lambda_{k}>0$ and $\eta_{k}=\lambda_{k}^{-1}$, then
\begin{equation}
\label{eq:na-42}
R_{k+1}=(I-Q_{k})R_{k},
\end{equation}
which is the classical projection step on $K$. More generally,
$0<\eta_{k}\lambda_{k}\le2$ gives the relaxed Kaczmarz step. In particular, when
the local NTK operators are rank-one or low-rank projections, the
piecewise-frozen dynamics become a kernel Kaczmarz or block-Kaczmarz
product on the output side.
\end{cor}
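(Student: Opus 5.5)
The corollary is essentially a direct computation, so I would verify it by substitution into the Euler recursion \prettyref{eq:na-30}. First, with $G_{k}=\lambda_{k}Q_{k}$, the factor $I-\eta_{k}G_{k}$ is literally $I-\eta_{k}\lambda_{k}Q_{k}$. This gives \prettyref{eq:na-41} immediately. Next, if $\lambda_{k}>0$ and $\eta_{k}=\lambda_{k}^{-1}$, then $\eta_{k}\lambda_{k}=1$, and \prettyref{eq:na-41} reduces to $R_{k+1}=(I-Q_{k})R_{k}$. Since $I-Q_{k}$ is the orthogonal projection onto $\ker Q_{k}$, this is the classical projection (Kaczmarz/POCS-type) step onto $(\operatorname{ran}Q_{k})^{\perp}$ on the output space. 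This settles \prettyref{eq:na-42}.

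For the relaxed statement, I would apply the spectral calculus for the projection $Q_{k}$. Writing $\omega_{k}:=\eta_{k}\lambda_{k}$, the factor is
\[
I-\omega_{k}Q_{k}=(I-Q_{k})+(1-\omega_{k})Q_{k}.
\]
This operator acts as the identity on $\ker Q_{k}$ and as multiplication by $1-\omega_{k}$ on $\operatorname{ran}Q_{k}$. Hence its norm is $\max\{1,|1-\omega_{k}|\}$ when $Q_{k}\neq0$ and $\ker Q_{k}\ne\{0\}$, and in every case it is at most $1$ exactly when $0\le\omega_{k}\le2$. This is precisely the contraction condition \prettyref{eq:na-33}, since $\|G_{k}\|=\lambda_{k}$ whenever $Q_{k}\neq0$. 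Therefore the relaxed Kaczmarz steps with relaxation parameter $\omega_{k}\in(0,2]$ fit within the framework of \prettyref{thm:na-discrete}.

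Finally, for the rank-one case I would write $G_{k}=T_{k}T_{k}^{*}$ with $T_{k}$ of rank one. Then $G_{k}=\|g_{k}\|^{2}\,\frac{g_{k}\otimes g_{k}}{\|g_{k}\|^{2}}$, so $G_{k}=\lambda_{k}Q_{k}$ with $Q_{k}$ the rank-one projection onto $\operatorname{span}\{g_{k}\}$. With the step choice above, the products $V_{\Pi}(m,0)$ become products of (relaxed) projections, which is the kernel Kaczmarz product; block projections give the block-Kaczmarz version. There is no real obstacle here. The only point needing care is that the norm computation for the relaxed factor must handle the degenerate cases $Q_{k}=0$ or $Q_{k}=I$, which I would treat separately and trivially.
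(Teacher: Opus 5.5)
Your proposal is correct and takes the same route as the paper, whose proof is simply the substitution of $G_k=\lambda_kQ_k$ into the Euler recursion. Your contraction analysis of $I-\omega_kQ_k$, and the observation that a positive rank-one $G_k$ is automatically a multiple of a projection, go beyond what the paper records; two small touch-ups there: the ``exactly when $0\le\omega_k\le2$'' claim fails trivially for $Q_k=0$ (the factor is then $I$ for every $\omega_k$), and the pruned half of the step-size condition is automatic, since $0\le G_{k,P}=T_kP_kT_k^*\le G_k$ gives $\eta_k\|G_{k,P}\|\le\eta_k\lambda_k\le2$.
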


\begin{proof}
This is immediate from \prettyref{eq:na-30} after substituting
\prettyref{eq:na-40}.
\end{proof}

\begin{rem}
\label{rem:na-kaczmarz-lit}
The output-side product in \prettyref{eq:na-41} is close in spirit to the
projection-product viewpoint in the Kaczmarz literature developed by
Jorgensen, Song, and Tian, and, more recently, to infinite-dimensional
operator and block Kaczmarz methods with explicit $\lambda$-dependent
bounds \cite{jorgensen2021positive,jorgensen2026defect,jeong2025infinite}.
It should be regarded as a special projection example on the output
side. For related recent advances on infinite-dimensional operator
Kaczmarz methods, see \cite{MR4835160,MR4472249,MR3996038}.
\end{rem}

\section*{Acknowledgements}
Jorgensen wishes to thank his present and recent co-authors, Daniel
Alpay, Sergii Bezuglyi, David
Stewart, and Eric Weber, for many illuminating discussions covering
diverse topics related to the theoretic foundations of the present
paper.
Jeong thanks Eric Weber for the insightful discussion on Kaczmarz methods.

\appendix

\section{A worked example}\label{sec:6}

In the Appendix, we give a concrete finite-dimensional example showing how the
constants in Theorems~\ref{thm:2-2} and \ref{thm:3-1} may be
computed explicitly. The point of the example is twofold. First, the
Jacobian $T$ is not diagonal, so the model does not collapse to a
trivial coordinate aligned case. Second, the cyclic task space generated
by $M$ occupies only part of the spectrum of $G$, and the example illustrates both the
discarded-kernel estimate from \prettyref{sec:3} and the complementary
attenuation/mixing refinement from \prettyref{sec:4}.
Let $H=\mathbb{R}^{4}$, $K=\mathbb{R}^{3}$, and define
\[
T=\begin{pmatrix}6 & 0 & 0 & 0\\
0 & \sqrt{3} & 0 & 1\\
0 & -\frac{1}{2\sqrt{3}} & \sqrt{\frac{2}{3}} & \frac{1}{2}
\end{pmatrix}.
\]
Let $P=diag\left(1,1,1,0\right)$, so that one of the four parameter
directions is removed. Then
\[
G=TT^{*}=\begin{pmatrix}36 & 0 & 0\\
0 & 4 & 0\\
0 & 0 & 1
\end{pmatrix},\quad G_{P}=TPT^{*}=\begin{pmatrix}36 & 0 & 0\\
0 & 3 & -\frac{1}{2}\\
0 & -\frac{1}{2} & \frac{3}{4}
\end{pmatrix}.
\]
The discarded part of the kernel is
\[
Q_{P}=G-G_{P}=\begin{pmatrix}0 & 0 & 0\\
0 & 1 & \frac{1}{2}\\
0 & \frac{1}{2} & \frac{1}{4}
\end{pmatrix},
\qquad
\|Q_{P}\|=\frac{5}{4}.
\]
Take the task space
\[
M=\mathrm{span}\left\{ e_{2},e_{3}\right\} \subseteq K.
\]
Since $G$ is diagonal in the standard basis, $M$ is $G$-invariant.
Hence,
\[
\mathcal C_G(M)=M.
\]
Therefore, the minimal $G$-invariant subspace generated by $M$ is
\[
N=\mathcal C_G(M)=M,
\]
and
\[
\lambda_{N}=\Vert G|_{N}\Vert=4,\qquad\Vert G\Vert=36.
\]
In particular, the task space sees only the eigenvalues $4$ and $1$,
while the global operator norm of $G$ is governed by the inactive
mode $36$. The finite-time perturbation estimate depends instead on
the smaller discarded-kernel norm $\|Q_{P}\|=5/4$.
We first compute the constant $\epsilon$ from \prettyref{eq:2-2}.
For $u=\left(0,x,y\right)\in N$, one has
\[
T^{*}u=\begin{pmatrix}0\\
\sqrt{3}\,x-\frac{1}{2\sqrt{3}}y\\
\sqrt{\frac{2}{3}}\,y\\
x+\frac{1}{2}y
\end{pmatrix}.
\]
A direct calculation gives $\Vert T^{*}u\Vert^{2}=4x^{2}+y^{2}$.
Also,
\[
\left(I-P\right)T^{*}u=\begin{pmatrix}0\\
0\\
0\\
x+\frac{1}{2}y
\end{pmatrix},
\]
so
\[
\Vert\left(I-P\right)T^{*}u\Vert^{2}=\left(x+\frac{1}{2}y\right)^{2}\le\frac{1}{2}\left(4x^{2}+y^{2}\right)\qquad\forall x,y\in\mathbb{R}.
\]
Thus, the best constant in \prettyref{eq:2-2} is
\[
\epsilon=\frac{1}{\sqrt{2}}.
\]
Applying \prettyref{thm:2-2} with $N=\mathcal C_G(M)=M$, $\lambda_{N}=4$,
and $\Vert Q_{P}\Vert=5/4$, we obtain
\[
\left\Vert \left(S_{P}\left(t\right)-S\left(t\right)\right)\big|_{M}\right\Vert
\le\frac{1}{\sqrt{2}}\min\left\{ \sqrt{5}\,t,\sqrt{\frac{5t}{8}}\right\} ,\qquad t\ge0.
\]
We now compute the refined constants from \prettyref{sec:4}. Relative
to $N=\mathrm{span}\left\{ e_{2},e_{3}\right\} $, one has
\[
G_{P}|_{N}=\begin{pmatrix}3 & -\frac{1}{2}\\
-\frac{1}{2} & \frac{3}{4}
\end{pmatrix}.
\]
Since $G|_{N}=\mathrm{diag}\left(4,1\right)$, the decomposition from
\prettyref{eq:3-3} is
\[
D=\begin{pmatrix}3 & 0\\
0 & \frac{3}{4}
\end{pmatrix},\qquad R=\begin{pmatrix}0 & -\frac{1}{2}\\
-\frac{1}{2} & 0
\end{pmatrix}, 
\]
and hence,
\[
\rho=\Vert R\Vert=\frac{1}{2}.
\]
Moreover,
\[
G|_{N}-D=\begin{pmatrix}1 & 0\\
0 & \frac{1}{4}
\end{pmatrix}\le\frac{1}{4}\begin{pmatrix}4 & 0\\
0 & 1
\end{pmatrix}=\frac{1}{4}G|_{N},
\]
so \prettyref{eq:3-5} holds with
\[
\delta=\frac{1}{4}.
\]
Therefore, \prettyref{thm:3-1} yields
\[
\left\Vert \left(S_{P}\left(t\right)-S\left(t\right)\right)\big|_{M}\right\Vert \le1-e^{-t}+\frac{t}{2},\quad t\ge0.
\]

In this example, the diagonal contribution can, in fact, be computed
exactly, since both $D$ and $G|_{N}$ are diagonal in the basis $\left\{ e_{2},e_{3}\right\} $.
Indeed,
\[
\Vert e^{-tD}-e^{-tG|_{N}}\Vert=\max\left\{ e^{-3t}-e^{-4t},e^{-3t/4}-e^{-t}\right\}.
\]
Hence, the argument in the proof of \prettyref{thm:3-1} gives the
following sharper version of the Section~\ref{sec:4} estimate:
\[
\left\Vert \left(S_{P}\left(t\right)-S\left(t\right)\right)|_{M}\right\Vert
\le
\max\left\{ e^{-3t}-e^{-4t},e^{-3t/4}-e^{-t}\right\} +\frac{t}{2},
\quad t\ge0.
\]
The first term in the maximum comes from the $e_{2}$-mode and the second
from the $e_{3}$-mode; together they represent the diagonal attenuation,
while the term $t/2$ comes from the off-diagonal mixing. Thus, the
decomposition in \prettyref{sec:4} is concrete in this example.

Finally, we compare the actual operator norm $\Vert\left(S_{P}\left(t\right)-S\left(t\right)\right)|_{M}\Vert$
with the bounds from Theorems \ref{thm:2-2} and \ref{thm:3-1}. For
$t\in\left\{ 1/4,1/2,1\right\} $, one obtains the following values:
\begin{center}
\begin{tabular}{c|c|c|c}
$t$  & actual value  & \prettyref{thm:2-2}  & \prettyref{thm:3-1}\tabularnewline
\hline
$1/4$  & $0.1661$  & $0.2795$  & $0.3462$\tabularnewline
$1/2$  & $0.2014$  & $0.3953$  & $0.6435$\tabularnewline
$1$  & $0.1993$  & $0.5590$  & $1.1321$ \tabularnewline
\end{tabular}
\par\end{center}

\begin{rem}
In the example above, the constants $\epsilon$, $\delta$, and $\rho$
are optimal for the hypotheses of Theorems \ref{thm:2-2} and \ref{thm:3-1}.
More precisely, $\epsilon$ is the best constant such that
\[
\Vert\left(I-P\right)T^{*}u\Vert\le\epsilon\Vert T^{*}u\Vert, \qquad\forall u\in N,
\]
$\delta$ is the smallest constant such that
\[
0\le G|_{N}-D\le\delta G|_{N},
\]
and $\rho=\Vert R\Vert$ is computed exactly. The discarded-kernel norm
$\|Q_{P}\|=5/4$ is also exact. The estimates in Sections \ref{sec:3} and
\ref{sec:4} are complementary. The estimate in \prettyref{sec:4}
does not use the relative task-capture condition \prettyref{eq:2-2};
under the finite-dimensionality and $G_{P}$-invariance assumptions on
$N$, it instead separates diagonal attenuation, measured by $\delta$,
from off-diagonal mixing, measured by $\rho$. Its $O(t)$ behavior is
naturally suited to short-time comparison and can remain informative on
longer intervals when $\delta\lambda_{N}+\rho$ is small. By contrast,
\prettyref{thm:2-2} exploits a small task-capture constant $\epsilon$,
and its $O(\epsilon\sqrt{t})$ alternative can remain informative on
comparatively longer time intervals when $\epsilon$ is small.
\end{rem}

\bibliographystyle{amsplain}
\bibliography{ref_mod}

\end{document}